\documentclass[11pt]{amsart}

\usepackage{geometry,graphicx,amssymb,amsmath,amsfonts,bm,tcolorbox,enumitem,amsbsy}
\usepackage{caption,booktabs,array,multirow,cellspace}
\usepackage[all]{xy}
\usepackage{pgfplots}
\usepackage{hyperref}
\usepackage{relsize}
\usepackage{tikz}
\usetikzlibrary{arrows,shapes}
\usetikzlibrary{arrows, decorations.markings,fit}
\usetikzlibrary{calc,3d}
\usepackage{tikz-3dplot}
\usepgfplotslibrary{groupplots}
\usetikzlibrary{matrix, positioning}
\pgfplotsset{compat=1.18}

\tikzset{
	>=stealth',
	punkt/.style={
		rectangle,
		rounded corners,
		draw=black, very thick,
		text width=6.5em,
		minimum height=2em,
		text centered},
	pil/.style={
		->,
		thick,
		shorten <=2pt,
		shorten >=2pt,}
}

\usepackage[normalem]{ulem}
\usepackage{subcaption}
\numberwithin{equation}{section}

\allowdisplaybreaks[3]

\newtheorem{theorem}{Theorem}[section]
\newtheorem{lemma}[theorem]{Lemma}
\newtheorem{assumption}[theorem]{Assumption}
\newtheorem{corollary}[theorem]{Corollary}
\newtheorem{proposition}[theorem]{Proposition}
\theoremstyle{definition}
\newtheorem{definition}[theorem]{Definition}
\newtheorem{remark}[theorem]{Remark}

\newcommand{\red}[1]{{\color{red} #1}}

\newcommand{\R}{\mathbb{R}}

\newcommand{\curl}{{\ensuremath\mathop{\mathrm{curl}\,}}}
\newcommand{\dive}{{\ensuremath\mathop{\mathrm{div}\,}}}

\newcommand{\ip}[1]{\big\langle {#1} \big\rangle}

\newcommand{\bld}[1]{\boldsymbol{#1}}

\newcommand{\bz}{\mathbf{z}}

\newcommand{\bH}{\bld{H}}

\newcommand{\by}{\mathbf{y}}

\newcommand{\bC}{\bld{C}}

\newcommand{\balpha}{{\bm \alpha}}

\newcommand{\bff}{{\mathbf{f}}}
\newcommand{\bgg}{{\mathbf{g}}}

\newcommand{\be}{\bm e}
\newcommand{\beps}{\bm \epsilon}

\newcommand{\bbE}{\mathbb{E}}
\newcommand{\bbR}{\mathbb{R}}
\newcommand{\bbS}{\mathbb{S}}

\newcommand{\calE}{\mathcal{E}}
\newcommand{\calC}{\mathcal{C}}
\newcommand{\calP}{\mathcal{P}}

\newcommand{\calI}{\mathcal{I}}
\newcommand{\calL}{\mathcal{L}}
\newcommand{\calN}{\mathcal{N}}

\newcommand{\dmu}{\mathrm{d}\mu}

\newcommand{\sumone}{\sum_{\ell=1}^{\infty}}

\newcommand{\sumlk}{\sum_{\ell=1}^{\infty}\sum_{k=1}^{2\ell+1}}
\newcommand{\hatdiv}[1]{\widehat{#1}_{\ell,k}^{\dive}}
\newcommand{\hatcurl}[1]{\widehat{#1}_{\ell,k}^{\curl}}

\newcommand{\bHsig}{\bH^{\sigma}(\bbS^2)}
\newcommand{\bHtau}{\bH^{\tau}(\bbS^2)}

\newcommand{\scaleKer}{\psi_{\rho}}
\newcommand{\hatphi}{\widehat{\psi}_\rho}

\definecolor{myred}{RGB}{195,0,0}
\definecolor{myblue}{RGB}{0,90,170}
\definecolor{mygreen}{RGB}{0,140,0}

\definecolor{myreddark}{RGB}{140,0,0}
\definecolor{myredlight}{RGB}{255,100,100}
\definecolor{mybluedark}{RGB}{0,50,120}
\definecolor{mybluelight}{RGB}{100,180,255}
\definecolor{mygreendark}{RGB}{0,90,0}
\definecolor{mygreenlight}{RGB}{100,220,100}

\definecolor{mypurple}{RGB}{120,0,120}
\definecolor{myorange}{RGB}{230,120,0}
\definecolor{mycyan}{RGB}{0,150,150}
\definecolor{myyellow}{RGB}{210,180,0}
\definecolor{mybrown}{RGB}{150,100,50}
\definecolor{mygray}{RGB}{120,120,120}

\title[General degree isoparametric SV]{Vector field multiplier operators and matrix-valued\\ kernel quasi-interpolation}\thanks{The first author was supported in part by National Natural Sicence Foundation of China (No. 12571407), Basic Research Program of Jiangsu (No. BK20252037), and a Jiangsu Shuangchuang Team program (No. JSSCTD202449). The third author is the corresponding author.}

\author[Z. Sun]{Zhengjie Sun}
\address{Nanjing University of Science and Technology, School of Mathematics and Statistics}
\email{sunzhengjie1218@163.com}

\author[B. Huang]{Biao Huang}
\address{School of Mathematics and Statistics, Nanjing University of Science and Technology, China}
\email{biaohuangbest@163.com}

\author[X. Sun]{Xingping Sun}
\address{Department of Mathematics, Missouri State University, Springfield, MO 65897, USA}
\email{xsun@missouristate.edu}

\thanks{All data generated or analyzed during this study are included in this article.}

\keywords{Tangent fields; quasi-interpolation; divergence-free; curl-free; sphere}
\subjclass{43A90, 41A25, 41A55, 65D12, 65D32.}

\begin{document}
	
	\include{aata_VecQI}

	\maketitle
	
	\begin{abstract}
		We develop and analyze a class of matrix-valued spherical-convolution kernels stemming from scaled zonal functions on $\mathbb{S}^2,$ the unit sphere embedded in $\R^3$. The construct of these kernels utilizes the Legendre differential equation and requires less stringent regularity conditions on the original zonal kernels. 
		The induced integral operators are simple Fourier-Legendre multipliers  that not only deliver optimal Sobolev error estimates  (in terms of the scaling parameter) but also yield natural Helmholtz-Hodge decompositions on the $L_2$-tangential vector fields on $\mathbb{S}^2$. Via discretization of the underlying convolution integrals, we harvest a family of vector-valued quasi-interpolants that 
		accomplish our approximation goal in the divergence/curl-free vector field. The quasi-interpolation algorithm is robust against noisy data. 
		The implementation process is adaptive to human-improvision, involving neither evaluating the convolution integrals nor solving systems of linear equations. The computational efficiency and executory robustness of the quasi-interpolation algorithm 
		stand in sharp contrast to the existing kernel-based vector field interpolation method. 

\end{abstract}


\section{Introduction}
\label{sec:intro}

\thispagestyle{empty}
Vector fields on the unit sphere $\bbS^2$ embedded in $\R^3$, play a central role in many areas of science and engineering. Typical examples include the horizontal wind field in atmospheric science, surface ocean currents in oceanography, and magnetic and electric fields in electromagnetics. In many aspects of real world applications, the vector fields are intrinsically defined on a surface and satisfy additional differential constraints arising from underlying physical principles. For instance, incompressible flow models impose a divergence-free condition on the velocity field, while certain electrostatic configurations give rise to curl-free electric fields \cite{Cockburn_2004JCP_locally,Fuselier_2016ComputFluids_high,Guzman_2014IMAJNA_conforming,Guzman_2014MCoM_conforming,Neilan_2021SINUM_divergence,Zhao_2019SINUM_divergence}. On $\bbS^2$, tangential vector fields admit a Helmholtz–Hodge decomposition into uniquely determined divergence-free and curl-free components \cite{Fuselier_2009SINUM_stability,Fuselier_2017IMAJNA_radial}, which provides a powerful diagnostic tool for analyzing phenomena such as cyclones, gyres, and large-scale pressure systems.

Kernel-based methods, especially those stemming from radial basis functions (RBFs) or spherical basis functions (SBFs), lend a versatile toolbox  for scattered data approximation on spheres and other manifolds; see \cite{Drake_2021SISC_partition,Drake_2022SISC_implicit,LeGia_2010SINUM_multiscale,Narcowich_2007JFAA_divergence,Sun_2024SISC_high} and references therein. For vector fields in $\bbR^3$, several authors have developed divergence/curl-free kernels by applying the curl/gradient operator to a scalar kernel, then transpose-multiplying to get a matrix-valued kernel whose columns are analytically divergence-free or curl-free \cite{Lowitzsch_2005AdvCM_matrix,Narcowich_1994MCoM_generalized,Wendland_2009SINUM_divergence}.
Vector spherical harmonics and spherical basis functions have likewise been employed to approximate and decompose tangential vector fields on the sphere \cite{Freeden_1993MMAS_vector,Freeden_2008book_spherical,Ganesh_2011MCoM_pseudospectral,LeGia_2021ToMS_algorithm}. Surface analogues of divergence/curl-free kernels are obtained by tangentializing the curl/gradient operator and then applying them to the restriction to $\bbS^2\times \bbS^2$ 
of a radial kernel on $\bbR^3\times \bbR^3$ \cite{Fuselier_2008MCoM_sobolev,Fuselier_2009MCoM_error,Fuselier_2009SINUM_stability,Narcowich_2007JFAA_divergence}.
These algorithms culminate in finding  vector-valued scattered-data interpolants that accomplish the  vector field approximation task.

To implement such an interpolatory algorithm, one must employ a strictly positive definite function (SPDF) on $\bbS^2;$
see \cite{chen-m-sun, xu-cheney}. In addition, the Fourier-Legendre coefficients of the SPDF employed therein 
must have a sufficiently-high decay rates, which arise from applications of differential operators. These have limited the scope of applicable kernels in the vector field approximation setting. Moreover, the interpolatory algorithm requires inverting large and (often) full interpolation matrices, which poses formidable computational challenges. In the scattered-data interpolation setting, the algorithm is rigid in the sense that one has to overhual the entire process while adding or removing a small number of vector-data. This has ruled of the possibility of human-improvising during the execution stage. Last but not the least, kernel-based interpolation algorithms become less robust  as the number of nodes increases, which becomes necessary  when the preset degree of accuracy is high. This dichotomy is precisely explained in
Schaback's uncertainty relation.\footnote{In a nutshell, Schaback's uncertainty relation asserts that the spectral norm of the inverse of an interpolation matrix is inversely proportional to the approximation order which the underlying interpolant attains.}
Fisher et al. \cite{Gao_2022ACHA_divergence} studied a quasi-interpolation scheme for divergence/curl-free vector field approximation in $\R^d\ (d \ge 2).$ But vector-data must be collected on a full grid in order to implement the quasi-interpolation algorithm, which has hindered its extension to scattered-data settings on compact domains or manifolds. 

In this paper, we establish a new quasi-interpolation framework for scattered-data vector field approximation and Helmholtz–Hodge decomposition on the sphere. 
For a given zonal kernel $\psi_\rho(x\cdot y),\ \rho>0$,  which is the restriction to $\bbS^2 \times \bbS^2$ of a $\rho$-scaled radial kernel on $\bbR^3 \times \bbR^3$,  we define an auxiliary zonal kernel $\kappa$  via an anti-derivative of $\psi_\rho$. By utilizing the Legendre differential equation, we construct divergence-free and curl-free matrix-valued kernels using $\kappa$ and two first-order tensors. In contrast to the existing constructs of  divergence-free and curl-free matrix-valued kernels which features direct applications of surface differential operators to a scalar kernel and transpose multiplying,
our approach requires weaker regularity conditions on the original kernel. Furthermore, our construction process does not require the original kernel to be positive definite and yet it preserves the 
positive-definite property of the original kernel. This has broaden the scope of applicable kernels in this computing environment.
These kernels induce vector-valued spherical convolution operators that act on tangential vector fields as Fourier-Legendre multipliers and preserve the Helmholtz–Hodge decomposition of a target vector field. We employ these convolution operators to approximate tangential vector fields and derive optimal Sobolev error estimates (in terms of the scaling parameter).
We point out here that these convolution operators are merely used for the sake of theoretical analysis. In implementing our quasi-interpolation algorithm, there is no need to evaluate the convolution integrals.  As the final step, we discretize the convolution integrals using QMC designs, which harvests a family of vector-valued quasi-interpolants. We derive Sobolev and mean-square error estimates for the quasi-interpolants. We also show that our quasi-interpolation algorithm is robust against noisy vector-valued data. As an added bonus, our quasi-interpolation algorithm is highly adaptive to various computing environments. We mention specifically here the aspect of human-improvising. A programmer can add or remove any data points cost-free. For scalar fields, related work is carried out in \cite{Sun_2025IMAJNA_spherical, Sun_2025_monte} .

An outline of the paper goes as follows. Section \ref{sec:prelim} devotes to introducing notations and reviewing vector spherical harmonics and basic properties of scalar zonal kernels that serve as building blocks for our divergence-free and curl-free kernels. Section \ref{sec:Construction} presents our new method of constructing matrix-valued kernels whose Fourier-Legendre coefficients decay algebraically and therefore  induce Sobolev spaces of vector fields.  In Section \ref{sec:quasi-interp}, we show that these matrix-valued kernels induce a class of Fourier-Lengedre multiplier operators. Furthermore, by discretizing the underlying integrals based on QMC-designs, we harvest a family of vector-valued quasi-interpolants. Section \ref{sec:err}  devotes to error analysis. We show that the multiplier operators not only deliver optimal Sobolev error estimates  (in terms of the scaling parameter) but also yield natural Helmholtz decompositions on the $L_2$-tangential vector fields on $\mathbb{S}^2$. We also develop the corresponding error analysis for our QMC-based quasi-interpolants, including their robustness against noisy data. Section \ref{sec:numer_exam} presents results of numerical experiments with scaled Gaussian and  Wendland kernels,
which are inline with the theoretical analysis. 

\section{Notations and preliminaries}
\label{sec:prelim}
\subsection{Scalar spherical harmonics}
We work on $\mathbb{S}^2$, the unit sphere embedded in $\R^3$, which we often refer to in the sequel as ``the sphere". Denote $\mu$ the rotational invariant measure on $\mathbb{S}^2$, i.e., the restriction to $\mathbb{S}^2$ of the Lebesgue measure on $\R^3.$
We define
$$L_2=L_2(\mathbb{S}^2):=\{f \; \mbox{is measurable on} \;\mathbb{S}^2: \|f\|_2 < \infty\},$$
in which the norm $\|\cdot\|_2$ is induced by the inner product
\begin{equation*}
	\langle f, g \rangle := \int_{\mathbb{S}^2} f(x)g(x) \, \mathrm{d}\mu(x), \quad f,g \in L_2(\mathbb{S}^2).
\end{equation*}
The standard orthonormal basis for $L_2(\mathbb{S}^2)$ is given by the spherical harmonics. For each integer $\ell \ge 0$ and $k=1, \dots, 2\ell+1$, let $Y_{\ell,k}$ denote the real-valued spherical harmonic of degree $\ell$ and order $k$. Let $\mathcal{H}_\ell := \operatorname{span}\{Y_{\ell,k} : k=1, \dots, 2\ell+1\}$ denote the $(2\ell+1)$-dimensional space of spherical harmonics of degree $\ell$. These form an orthonormal basis of the eigenspace corresponding to the eigenvalue $\lambda_\ell = \ell(\ell + 1)$ of the Laplace--Beltrami operator $\Delta_*$ on the sphere.

The spherical harmonics of degree $\ell$ satisfy the addition formula 
\begin{equation}\label{eq:addition_formula}
	\sum_{k=1}^{2\ell+1} Y_{\ell,k}(x) Y_{\ell,k}(y) = \frac{2\ell+1}{4\pi} P_\ell(x \cdot y),
\end{equation}
where $P_\ell$ is the Legendre polynomial of degree $\ell$ on $[-1,1]$, normalized such that $P_\ell(1)=1$. 

Any $f \in L_2(\mathbb{S}^2)$ has a unique Fourier-Legendre representation of the form
\begin{equation*}
	f(x) = \sum_{\ell=0}^\infty \sum_{k=1}^{2\ell+1} \widehat{f}_{\ell,k} Y_{\ell,k}(x), 
\end{equation*}
with Fourier-Legendre coefficients $\widehat{f}_{\ell,k} = \langle f, Y_{\ell,k} \rangle$. The spherical version of the Plancherel theorem asserts that
\begin{equation*}
	\langle f, g \rangle = \sum_{\ell=0}^\infty \sum_{k=1}^{2\ell+1} \widehat{f}_{\ell,k}\ \overline{\widehat{g}_{\ell,k}}, 
	\quad f,\  g \in L_2(\mathbb{S}^2).
\end{equation*}
For $\sigma \ge 0,$
we define the Sobolev space $H^\sigma(\mathbb{S}^2)$ by
\begin{equation*}
	H^\sigma(\mathbb{S}^2) := \left\{ f \in L_2(\mathbb{S}^2) : \|f\|^2_{H^{\sigma}(\mathbb{S}^2)} < \infty \right\},
\end{equation*}
with norm
\begin{equation}\label{eq:sobolev_norm}
	\|f\|^2_{H^{\sigma}(\mathbb{S}^2)} = \sum_{\ell=0}^\infty \sum_{k=1}^{2\ell+1} (1+\lambda_\ell)^{\sigma} |\widehat{f}_{\ell,k}|^2.
\end{equation}

\subsection{Tangent vector fields} 
In this subsection, we briefly review the $\bld{L}_2$-theory for tangential vector fields on $\mathbb{S}^2$
and introduce notations and terminologies along the way. Our exposition here follows closely those in \cite{Freeden_1993MMAS_vector, Narcowich_2007JFAA_divergence}.
A vector field $\bff: \mathbb{S}^2 \to \mathbb{R}^3$ is called tangential if $x^\top \bff(x)=0$ for all $x \in \mathbb{S}^2$.  Here $\mathbf{v}$ and $\mathbf{v}^\top$  denote respectively a column vector and its transpose.
We consider a point $x \in \mathbb{S}^2$ interchangeably as the vector starting at the origin and ending at $x$, which is exactly the outward unit normal vector at the point $x$. The space of square-integrable tangential vector fields $\bld{L}_2(\mathbb{S}^2)$ is defined by
\begin{equation*}
	\bld{L}_2(\mathbb{S}^2) = \left\{ \bff : \mathbb{S}^2 \to \mathbb{R}^3 \mid 
	\bff \text{ is tangential and } \langle \bff, \bff \rangle < \infty \right\},
\end{equation*}
equipped with the inner product
\begin{equation*}
	\langle \bff, \mathbf{g} \rangle = \int_{\mathbb{S}^2} \bff(x)^{\top} \mathbf{g}(x) \, \mathrm{d}\mu(x), \quad \bff, \mathbf{g} \in \bld{L}_2(\mathbb{S}^2).
\end{equation*}

Let $\nabla_*$  and $\mathbf{L}_*$ denote, respectively, the surface gradient and surface curl operators. It is well known that $\nabla_*^{\top}\nabla_* = \mathbf{L}_*^{\top}\mathbf{L}_* = -\Delta_*$. 
For a scalar field $f$, $\nabla_* f$ is tangential. Furthermore, the surface curl/gradient of a scalar function is divergence/curl-free. 

Vector spherical harmonics provide a natural orthonormal basis for tangent vector fields, which are constructed from scalar spherical harmonics by applying the surface gradient and surface curl operators. For each $\ell \ge 1$ and $k=1, \dots, 2\ell+1$, the following two families of vector spherical harmonics are defined by:
\begin{equation}\label{eq:def_ylk}
	\mathbf{y}_{\ell,k} = \frac{\mathbf{L}_* Y_{\ell,k}}{\sqrt{\ell(\ell+1)}}, \quad
	\mathbf{z}_{\ell,k} = \frac{\nabla_* Y_{\ell,k}}{\sqrt{\ell(\ell+1)}}.
\end{equation}
Respectively, they are orthonormal bases for the divergence-free and curl-free tangential vector fields on $\mathbb{S}^2$. Together, they form a complete orthonormal basis for $\bld{L}_2(\mathbb{S}^2)$, 
under which  each $\bff \in \bld{L}_2(\mathbb{S}^2)$ admits the expansion
\begin{equation*}
	\bff(x) = \sum_{\ell=1}^\infty \sum_{k=1}^{2\ell+1} \left( \widehat{\bff}^{\mathrm{div}}_{\ell,k} \mathbf{y}_{\ell,k}(x) + \widehat{\bff}^{\mathrm{curl}}_{\ell,k} \mathbf{z}_{\ell,k}(x) \right),
\end{equation*}
where the divergence-free and curl-free Fourier-Legendre coefficients are given by
\begin{equation*}
	\widehat{\bff}^{\mathrm{div}}_{\ell,k} := \langle \bff, \mathbf{y}_{\ell,k} \rangle, \quad 
	\widehat{\bff}^{\mathrm{curl}}_{\ell,k} := \langle \bff, \mathbf{z}_{\ell,k} \rangle.
\end{equation*}
For $\sigma \in \mathbb{N}_0$, we define the vectorial Sobolev space $\bld{H}^{\sigma}(\mathbb{S}^2)$ as the set of all $\bff \in \bld{L}_2(\mathbb{S}^2)$ such that
\begin{equation*}
	\|\bff\|_{\bld{H}^{\sigma}(\mathbb{S}^2)}^2 = \sum_{\ell=1}^\infty \sum_{k=1}^{2\ell+1} (1+\lambda_\ell)^{\sigma} \left( |\widehat{\bff}^{\mathrm{div}}_{\ell,k}|^2 + |\widehat{\bff}^{\mathrm{curl}}_{\ell,k}|^2 \right) < \infty.
\end{equation*}
We denote by $\bld{H}^{\sigma}_{\mathrm{div}}(\mathbb{S}^2)$ and $\bld{H}^{\sigma}_{\mathrm{curl}}(\mathbb{S}^2)$ the divergence-free and curl-free subspaces of $\bld{H}^{\sigma}(\mathbb{S}^2)$, respectively.

\subsection{Zonal kernels and native spaces}
We employ as our main approximation tools a family of zonal kernels. A kernel $\Phi : \mathbb{S}^2 \times \mathbb{S}^2 \to \mathbb{R}$ is called \emph{zonal} if it is invariant under orthogonal transformations. Equivalently, there exists a univariate function $\psi : [-1,1] \to \mathbb{R}$, such that
\[
\Phi(x,y) = \psi(x \cdot y), \quad (x,y) \in \bbS^2\times \bbS^2.
\]
A zonal kernel $\Phi$ enjoys a simple 
Fourier--Legendre expansion:
\begin{equation}\label{eq:Four-LegendreSeries}
	\psi(x \cdot y) = \sum_{\ell=0}^\infty \frac{2\ell+1}{4\pi} \widehat{\psi}(\ell) P_\ell(x \cdot y),
\end{equation}
where the Fourier--Legendre coefficients are given by
\begin{equation}\label{eq:def_hatpsi}
	\widehat{\psi}(\ell) = \frac{2\pi}{2\ell+1} \int_{-1}^1 \psi(t) P_\ell(t) \, \mathrm{d}t.
\end{equation}
Applying the addition formula \eqref{eq:addition_formula}, the kernel may be rewritten in terms of spherical harmonics as
\begin{equation}\label{eq:zonal_kernel_SH}
	\psi(x \cdot y) = \sum_{\ell=0}^\infty \widehat{\psi}(\ell) \sum_{k=1}^{2\ell+1} Y_{\ell,k}(x) Y_{\ell,k}(y).
\end{equation}
This identity connects  a zonal kernel $\Phi$ to its univariate-function representor $\psi$ seamlessly. In what follows, we make no effort trying to distinguish one from the other when referring to a zonal kernel.
If $\widehat{\psi}(\ell) > 0$ for all $\ell \ge 0$, then $\psi$ is strictly positive definite and induces the following reproducing kernel Hilbert space (RKHS) $\mathcal{N}_{\psi}$: 
\begin{equation*}
	\mathcal{N}_{\psi} = \left\{ f \in \mathcal{D}'(\mathbb{S}^2) : \sum_{\ell=0}^\infty \sum_{k=1}^{2\ell+1} \frac{|\widehat{f}_{\ell,k}|^2}{\widehat{\psi}(\ell)} < \infty \right\},
\end{equation*}
where $\mathcal{D}'(\mathbb{S}^2)$ denotes the space of 
Schwarz-class distributions on the sphere. In the literature, we often call $\mathcal{N}_{\psi}$
the \emph{native space} of $\psi$. 
It is evident that if the coefficients decay algebraically as $\widehat{\psi}(\ell) \sim (1+\lambda_\ell)^{-\sigma}$, then $\mathcal{N}_{\psi}$ is norm-equivalent to the Sobolev space $H^\sigma(\mathbb{S}^2)$.

To facilitate multiscale analysis, we consider families of scaled kernels. Let $\phi \in C[0,\infty)$. For a scaling parameter $\rho \in (0,1)$, we define on the sphere the following kernel with the scaling parameter $\rho:$
\begin{equation}\label{eq:scaled_zonal_kernel}
	\psi_\rho(x \cdot y) := C_{\psi,\rho} \phi\left(\rho^{-1}\|x-y\|\right), \quad (x,y) \in \bbS^2\times \bbS^2,
\end{equation}
where $C_{\psi,\rho}$ is a normalizing constant such that $\int_{\mathbb{S}^2} \psi_\rho(x \cdot y) \, \mathrm{d}\mu(x) = 1$. The scaled kernel admits the expansion
\begin{equation*}
	\psi_\rho(x \cdot y) = \sum_{\ell=0}^\infty \widehat{\psi}_\rho(\ell) \sum_{k=1}^{2\ell+1} Y_{\ell,k}(x) Y_{\ell,k}(y).
\end{equation*}
The associated native space $\mathcal{N}_{\psi_\rho}$ is equipped with the $\rho$-dependent norm
\begin{equation}\label{eq:NativeSpace_scaled}
	\|f\|_{\psi_\rho}^2 = \sum_{\ell=0}^\infty \sum_{k=1}^{2\ell+1} \frac{|\widehat{f}_{\ell,k}|^2}{\widehat{\psi}_\rho(\ell)}.
\end{equation}

To facilitate error analysis, we make the following assumptions on the Fourier-Legendre coefficients of the underlying scaled zonal kernels.

\begin{assumption}\label{assump:kernel}
	There exist  $0 <  \rho_0 < 1$, $m \ge 0,$ and $c_1, c_2 > 0$, such that
	\begin{subequations}\label{eq:assump_eq1}
		\begin{align}
			&|1 - \widehat{\psi}_\rho(\ell)| \le c_1 (\ell \rho)^m, \quad 0 < \rho <  \rho_0, \quad  0 \le \ell \le \lfloor \rho^{-1}-1 \rfloor:=\ell_\rho.  \label{Assumption-1}\\
			&\widehat{\psi}_\rho(\ell) \le c_2, \quad \ell > \ell_\rho.      \label{Assumption-2}
		\end{align}    
	\end{subequations}
\end{assumption}

\begin{assumption}\label{assump2}
	For the scaled zonal kernel $\scaleKer$, there exist constants $0<c_3\leq c_4<\infty$ such that
	\begin{equation}\label{eq:kernelFourierCond}
		c_3(1+\rho\ell)^{-2s}\leq \widehat{\psi}_\rho(\ell)\leq c_4(1+\rho\ell)^{-2s},~~\ell>0.
	\end{equation}
\end{assumption}

Assumption \ref{assump:kernel} is first instituted in 
\cite{Sun_2025IMAJNA_spherical}. The most commonly used scaled zonal kernels, such as
spherical Poisson kernels, Gaussian and compactly supported Wendland kernels, satisfy this assumption.
Using Assumption \ref{assump2},   Le Gia et al. \cite[Lemma~3.1]{LeGia_2010SINUM_multiscale} proved that the two norms $\| \cdot \|_{\psi_\rho}$ and $\| \cdot \|_{H^s(\mathbb{S}^2)}$
are  equivalent.

\begin{lemma}\label{lem:Equiv_Sobolev_Native}
	Let $\rho \in (0,1]$. For any $f \in H^s(\mathbb{S}^2)$, the following norm equivalences hold:
	\begin{equation*}
		\|f\|_{\psi_\rho} \le c_3^{-1/2} \|f\|_{H^s(\mathbb{S}^2)} \quad \text{and} \quad
		\|f\|_{\psi_1} \le (c_4/c_3)^{1/2} \rho^{-s} \|f\|_{\psi_\rho}.
	\end{equation*}
\end{lemma}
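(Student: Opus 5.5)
Both inequalities follow from a term-by-term comparison of the weights in the Fourier--Legendre expansions, using Assumption~\ref{assump2} together with the definitions \eqref{eq:sobolev_norm} and \eqref{eq:NativeSpace_scaled}. The only real content is an elementary inequality between $(1+\rho\ell)^{2s}$ and $(1+\lambda_\ell)^{s}$, and between $(1+\ell)^{2s}$ and $(1+\rho\ell)^{2s}$. The $\ell=0$ term needs a word: Assumption~\ref{assump2} is stated only for $\ell>0$. Since $\psi_\rho$ is normalized to have integral one, we have $\widehat{\psi}_\rho(0)=1$, and the bounds below at $\ell=0$ hold provided $c_3\le 1\le c_4$. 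I would adopt this convention, or simply note that \eqref{eq:kernelFourierCond} is assumed at $\ell=0$ as well.

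\textbf{First inequality.} By the lower bound in \eqref{eq:kernelFourierCond},
\[
\frac{1}{\widehat{\psi}_\rho(\ell)}\le c_3^{-1}(1+\rho\ell)^{2s}.
\]
Since $\rho\le 1$,
\[
(1+\rho\ell)^{2s}\le (1+\ell)^{2s}=\big((1+\ell)^2\big)^{s}.
\]
The only point needing care is comparing $(1+\ell)^2=1+2\ell+\ell^2$ with $1+\lambda_\ell=1+\ell+\ell^2$. These differ by $\ell$, so $(1+\ell)^2\le 2(1+\lambda_\ell)$, which costs a factor $2^{s}$. To obtain the sharp constant $c_3^{-1/2}$ as stated, I would instead use $(1+\rho\ell)^2\le 1+\lambda_\ell$ directly, i.e.\ $2\rho\ell+\rho^2\ell^2\le \ell+\ell^2$. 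This holds whenever $\rho\le 1/2$. For $\rho\in(1/2,1]$ it requires $\rho\le\sqrt{\ldots}$ and may fail at $\ell=1$, $\rho=1$. This is the expected obstacle: either the constant absorbs a harmless factor like $2^{s/2}$, or one notes, as in Le Gia et al., that the Sobolev norm there is defined with $(1+\ell)^{2s}$ weights. I would present the argument with the weight $(1+\ell)^{2s}$, remark on the equivalence, and then sum over $\ell,k$ and take square roots.

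\textbf{Second inequality.} Using the upper bound in \eqref{eq:kernelFourierCond} at $\rho=1$ and the lower bound at $\rho$,
\[
\frac{1}{\widehat{\psi}_1(\ell)}\le c_3^{-1}(1+\ell)^{2s}
\qquad\text{and}\qquad
(1+\rho\ell)^{2s}\le \frac{c_4}{\widehat{\psi}_\rho(\ell)}.
\]
It therefore suffices to show
\[
(1+\ell)\le \rho^{-1}(1+\rho\ell),
\]
which is immediate since $\rho^{-1}(1+\rho\ell)=\rho^{-1}+\ell\ge 1+\ell$ for $\rho\le1$. Hence
\[
\frac{1}{\widehat{\psi}_1(\ell)}\le \frac{c_4}{c_3}\,\rho^{-2s}\,\frac{1}{\widehat{\psi}_\rho(\ell)}
\]
for every $\ell$. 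Multiplying by $|\widehat f_{\ell,k}|^2$, summing, and taking square roots gives the claim.
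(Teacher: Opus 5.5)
Your argument is correct and is the same term-by-term comparison of weights behind the cited Le Gia et al.\ Lemma~3.1 (the paper gives no proof of its own). Your caveat about the first inequality is also well founded: with this paper's weights $(1+\lambda_\ell)^{s}$, the constant $c_3^{-1/2}$ genuinely fails near $\rho=1$. For example, take $\widehat{\psi}_\rho(\ell)=(1+\rho\ell)^{-2s}$ (so $c_3=c_4=1$), $\rho=1$ and $f=Y_{1,k}$; then $\|f\|_{\psi_1}=2^{s}>3^{s/2}=\|f\|_{H^s(\mathbb{S}^2)}$. So one must either adopt the $(1+\ell)^{2s}$ convention or accept an extra factor, at best $(4/3)^{s/2}$. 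One verbal slip in the second part: ``upper'' and ``lower'' are swapped. The bound $1/\widehat{\psi}_1(\ell)\le c_3^{-1}(1+\ell)^{2s}$ comes from the lower bound in \eqref{eq:kernelFourierCond}, and $(1+\rho\ell)^{2s}\le c_4/\widehat{\psi}_\rho(\ell)$ from the upper bound. The displayed inequalities and the conclusion are right.
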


At the end of this subsection, we review the notion of positive definiteness for matrix-valued kernels, which is essential for vector field interpolation but only plays a peripheral role in quasi-interpolation studied herein. A kernel $\mathbf{\Psi}: \mathbb{S}^2 \times \mathbb{S}^2 \to \mathbb{R}^{3 \times 3}$ is \emph{positive semidefinite} if, for any finite set of points $X = \{x_j\}_{j=1}^N \subset \mathbb{S}^2$ and any vectors $\boldsymbol{\alpha}_j \in T_{x_j}\mathbb{S}^2$,
\begin{equation*}
	\sum_{j,k=1}^N \boldsymbol{\alpha}_j^\top \mathbf{\Psi}(x_j, x_k) \boldsymbol{\alpha}_k \ge 0.
\end{equation*}
If the quadratic form vanishes only when $\boldsymbol{\alpha}_j = \mathbf{0}$ for all $j$, $\mathbf{\Psi}$ is termed \emph{positive definite}.

\subsection{The addition formula for vector spherical harmonics} Serving as the pillar of
construction of divergence-free and curl-free kernels is the following addition formula for vector spherical harmonics.

\begin{lemma}\label{lem:vec_addition}
	Let $\{\mathbf{y}_{\ell,k}\}$ and $\{\mathbf{z}_{\ell,k}\}$ be, respectively, the orthonormal bases of divergence-free and curl-free vector spherical harmonics of degree $\ell \ge 1$,  Define the tensor kernels
	\begin{equation*}
		\mathcal{S}_\ell(x,y) := \sum_{k=1}^{2\ell+1} \mathbf{y}_{\ell,k}(x) \mathbf{y}_{\ell,k}(y)^{\top}, \quad
		\mathcal{T}_\ell(x,y) := \sum_{k=1}^{2\ell+1} \mathbf{z}_{\ell,k}(x) \mathbf{z}_{\ell,k}(y)^{\top}.
	\end{equation*}
	Then we have
	\begin{align*}
		\mathcal{S}_\ell(x,y) &= \frac{2\ell+1}{4\pi\ell(\ell+1)} \left[ P_\ell''(t) \mathbf{Q}(x,y) + P_\ell'(t) \mathbf{R}(x,y) \right], \\
		\mathcal{T}_\ell(x,y) &= \frac{2\ell+1}{4\pi\ell(\ell+1)} \left[ P_\ell''(t) \mathbf{V}(x,y) + P_\ell'(t) \mathbf{W}(x,y) \right],
	\end{align*}
	in which $t := x \cdot y$, and $\mathbf{Q, R, V, W}$ are matrix-valued functions defined by
	\begin{align*}
		\mathbf{Q}(x,y) &:= -(x \times y)(x \times y)^{\top}, & 
		\mathbf{R}(x,y) &:= t\mathbf{I} - y x^{\top}, \\
		\mathbf{V}(x,y) &:= (y - t x)(x - t y)^\top, &
		\mathbf{W}(x,y) &:= (\mathbf{I} - x x^\top)(\mathbf{I} - y y^\top).
	\end{align*}
\end{lemma}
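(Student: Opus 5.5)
The plan is to derive both identities from the scalar addition formula \eqref{eq:addition_formula} by applying the surface operators separately in the variables $x$ and $y$. By \eqref{eq:def_ylk},
\begin{equation*}
\mathcal{T}_\ell(x,y)=\frac{1}{\ell(\ell+1)}\sum_{k=1}^{2\ell+1}\nabla_{*,x}Y_{\ell,k}(x)\,\big(\nabla_{*,y}Y_{\ell,k}(y)\big)^{\top}
=\frac{2\ell+1}{4\pi\ell(\ell+1)}\,\nabla_{*,x}\nabla_{*,y}^{\top}P_\ell(x\cdot y).
\end{equation*}
The sum can be interchanged with the differential operators because it is finite, and the operators in $x$ and $y$ act on different variables. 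The analogous identity holds for $\mathcal{S}_\ell$ with $\mathbf{L}_{*,x}$ and $\mathbf{L}_{*,y}$. I will use the conventions $\nabla_* F(x)=(\mathbf{I}-xx^\top)\nabla F(x)$, where $F$ is any smooth extension of the function off the sphere, and $\mathbf{L}_*=x\times\nabla_*$. Thus $\mathbf{L}_{*,x}=[x]_\times\nabla_{*,x}$, where $[x]_\times$ denotes the cross-product matrix $v\mapsto x\times v$. The problem therefore reduces to computing the matrix of mixed surface derivatives of a zonal function $g(x\cdot y)$ with $g=P_\ell$.

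For the curl-free part, I extend $g(x\cdot y)$ to $\R^3\times\R^3$ by the same formula and differentiate. First, $\nabla_{*,y}g(x\cdot y)=(\mathbf{I}-yy^\top)x\,g'(t)=(x-ty)g'(t)$. Before applying $\nabla_{*,x}$, I extend this expression in $x$ as $(x-(x\cdot y)y)\,g'(x\cdot y)$. Its ambient partial derivative $\partial/\partial x_i$ of the $j$-th entry is
\begin{equation*}
(\delta_{ij}-y_iy_j)\,g'(t)+y_i\,(x_j-ty_j)\,g''(t).
\end{equation*}
Projecting the row index $i$ with $(\mathbf{I}-xx^\top)$ gives
\begin{equation*}
\nabla_{*,x}\nabla_{*,y}^\top g=(\mathbf{I}-xx^\top)(\mathbf{I}-yy^\top)\,g'(t)+(y-tx)(x-ty)^\top g''(t)=\mathbf{W}g'+\mathbf{V}g''.
\end{equation*}
Setting $g=P_\ell$ yields the formula for $\mathcal{T}_\ell$.

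For the divergence-free part, I write
\begin{equation*}
\mathbf{L}_{*,x}\mathbf{L}_{*,y}^\top g=[x]_\times\big(\nabla_{*,x}\nabla_{*,y}^\top g\big)[y]_\times^\top,
\end{equation*}
and simplify each term with the identities $[x]_\times x=0$, $[y]_\times y=0$ and $[y]_\times^\top=-[y]_\times$. In the $g''$ term, $[x]_\times(y-tx)=x\times y$ and $\big([y]_\times(x-ty)\big)^\top=(y\times x)^\top=-(x\times y)^\top$, which gives $\mathbf{Q}$. In the $g'$ term, the projections are absorbed, so it reduces to $-[x]_\times[y]_\times$. The triple product identity $x\times(y\times v)=y(x\cdot v)-t\,v$ then gives $-[x]_\times[y]_\times=t\mathbf{I}-yx^\top=\mathbf{R}$.

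The main obstacle is careful bookkeeping rather than any conceptual difficulty. Three points need care. First, the surface gradient must be computed correctly through extensions: the second differentiation acts on the already-projected vector $(x-ty)g'$, and projecting the $x$-derivative removes any dependence on the choice of radial extension. Second, the row and column ordering must come out as $\mathbf{z}(x)\mathbf{z}(y)^\top$ rather than its transpose. Third, the signs in the cross-product manipulations must be tracked, and they depend on the convention $\mathbf{L}_*=x\times\nabla_*$. If the paper's reference uses $\mathbf{L}_*=-x\times\nabla_*$, the two sign changes cancel in $\mathbf{L}_{*,x}\mathbf{L}_{*,y}^\top$, so the result is unchanged. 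As a final check, I would verify that $\mathcal{S}_\ell(x,y)^\top=\mathcal{S}_\ell(y,x)$, as it must be, which tests the transposes in $\mathbf{R}$ and $\mathbf{V}$.
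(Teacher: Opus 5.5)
Your proof is correct and follows essentially the paper's route: you apply the surface operators in $x$ and $y$ to the scalar addition formula and compute the mixed derivatives of $P_\ell(x\cdot y)$ by the chain and product rules, exactly as the paper does for $\mathcal{T}_\ell$. The one difference is in $\mathcal{S}_\ell$, which you obtain from the curl-free computation via $\mathbf{L}_{*}=[x]_\times\nabla_*$ (in effect, $\mathcal{S}_\ell(x,y)=[x]_\times\mathcal{T}_\ell(x,y)[y]_\times^\top$), whereas the paper differentiates $P_\ell'(t)(y\times x)^\top$ directly with $\mathbf{L}_{*,x}$; your version saves a second derivative computation and makes the link between $\mathbf{Q},\mathbf{R}$ and $\mathbf{V},\mathbf{W}$ explicit.
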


\begin{proof}
	To derive the first formula, we write first
	\begin{equation*}
		\mathcal{S}_\ell(x,y) = \frac{1}{\ell(\ell+1)} \mathbf{L}_{*,x} \mathbf{L}_{*,y}^{\top} \left( \sum_{k=1}^{2\ell+1} Y_{\ell,k}(x) Y_{\ell,k}(y) \right).
	\end{equation*}
	Applying the addition formula for scalar spherical harmonics within the parentheses above, 
	we get
	\begin{equation}\label{eq:Sl_operator_form}
		\mathcal{S}_\ell(x,y) = \frac{2\ell+1}{4\pi\ell(\ell+1)} \mathbf{L}_{*,x} \mathbf{L}_{*,y}^{\top} P_\ell(x \cdot y).
	\end{equation}
	Recall $\mathbf{L}_{*,y} = y \times \nabla_y$, which leads to
	\begin{equation*}
		\mathbf{L}_{*,y}^{\top} P_\ell(x \cdot y) = (y \times \nabla_y)^\top P_\ell(t) = P_\ell'(t) (y \times x)^\top.
	\end{equation*}
	Substituting this into \eqref{eq:Sl_operator_form} and simplifying, we obtain
	\begin{equation*}
		\mathbf{L}_{*,x} \left[ P_\ell'(t) (y \times x)^\top \right] = (\mathbf{L}_{*,x} P_\ell'(t)) (y \times x)^\top + P_\ell'(t) \mathbf{L}_{*,x} (y \times x)^\top.
	\end{equation*}
	The desired formula for $\mathcal{S}_\ell$ then follows  from the identities:
	\[
	\mathbf{L}_{*,x} P_\ell'(t) = P_\ell''(t) (x \times y), \quad \mathbf{L}_{*,x} (y \times x)^\top = t\mathbf{I} - y x^\top.
	\]
	The derivation of the second formula  proceeds analogously.
	Writing
	\begin{equation*}
		\mathcal{T}_\ell(x,y) = \frac{2\ell+1}{4\pi\ell(\ell+1)} \nabla_{*,x} \nabla_{*,y}^{\top} P_\ell(x \cdot y),
	\end{equation*}
	and recalling $\nabla_{*,y} = (\mathbf{I} - yy^\top)\nabla_y$, we obtain $\nabla_{*,y} P_\ell(t) = P_\ell'(t)(x - ty)$. We then utilize the identities
	\begin{equation*}
		\nabla_{*,x} P_\ell'(t) = P_\ell''(t)(y - tx), \quad \nabla_{*,x} (x - ty)^\top = (\mathbf{I} - xx^\top)(\mathbf{I} - yy^\top),
	\end{equation*}
	to complete the proof.
\end{proof}



\section{New Construction Method}
\label{sec:Construction}
To construct a divergence/curl-free kernel
from a scalar zonal kernel $\psi$, the existing method entails direct applications of 
surface differential operators \cite{Fuselier_2009SINUM_stability,Narcowich_2007JFAA_divergence}:
\begin{equation}\label{eq:standard_construction}
	\Psi_{\dive} := \mathbf{L}_{*,x}\mathbf{L}_{*,y}^{\top}[\psi(x\cdot y)] \quad \text{and} \quad \Psi^{\curl} := \nabla_{*,x}\nabla_{*,y}^{\top}[\psi(x\cdot y)].
\end{equation}
The resulted kernels admit the expansions
\begin{align*}
	\Psi_{\dive}(x,y) &= \sum_{\ell=1}^\infty \ell(\ell+1)\widehat{\psi}(\ell) \sum_{k=1}^{2\ell+1}\mathbf{y}_{\ell,k}(x) \mathbf{y}_{\ell,k}^{\top}(y), \\
	\Psi_{\curl}(x,y) &= \sum_{\ell=1}^\infty \ell(\ell+1)\widehat{\psi}(\ell) \sum_{k=1}^{2\ell+1}\mathbf{z}_{\ell,k}(x) \mathbf{z}_{\ell,k}^{\top}(y).
\end{align*}
The multiplier $\ell(\ell+1)$ puts a significant burden on the Fourier-Legendre coefficients $\widehat{\psi}(\ell)$,
which has ruled out some otherwise useful kernels from consideration. In fact, the spectral norms of the matrices
\[
\sum_{k=1}^{2\ell+1}\mathbf{y}_{\ell,k}(x) \mathbf{y}_{\ell,k}^{\top}(y), \quad \sum_{k=1}^{2\ell+1}\mathbf{z}_{\ell,k}(x) \mathbf{z}_{\ell,k}^{\top}(y),
\]
are asymptotically in the order of $\ell$ in view of Lemma 3.5 in \cite{Fuselier_2009MCoM_error} . In order for the two infinite series above to converge uniformly on $\mathbb{S}^2 \times \mathbb{S}^2$, it is typical that 
$\sum_{\ell=1}^\infty \ell^3\  \widehat{\psi}(\ell) < \infty.$
To relax the constraint, we propose and study a new construction method utilizing Legendre differential equation and an integral primitive of the scalar kernel. Let $\psi$ be a zonal kernel 
with $\widehat{\psi}(0)=1$. We define the auxiliary function $\kappa: [-1, 1] \to \mathbb{R}$ by
\begin{equation}\label{eq:def_kappa}
	\kappa(t) = \frac{1}{4\pi(1-t^2)}\int_{-1}^t \left(1-4\pi\psi(s)\right)\mathrm{d} s.
\end{equation}
The function $\kappa$ is continuous on $[-1,1]$. Set $\beta(t):=\int_{-1}^t \left(1-4\pi\psi(s)\right)\mathrm{d} s.$ Then $\beta(-1)=0.$
The normalization condition that $2\pi\int_{-1}^1\psi(s)\mathrm{d} s = 1$ ensures that
$\beta(1)=0.$
Thus, limits of the function $\kappa$ at both end points are of the type $\frac{0}{0}$. A simple application of L'H\^opital's rule shows that these limits exist.

With this auxiliary function $\kappa$ at hand, we define the new divergence/curl-free kernels respectively by
\begin{subequations}\label{eq:proposed_kernels}
	\begin{align}
		\Psi^\dive(x,y) &= \kappa'(t)\mathbf{Q} + \kappa(t)\mathbf{R}, \label{eq:divfree_kernel} \\
		\Psi^\curl(x,y) &= \kappa'(t)\mathbf{V} + \kappa(t)\mathbf{W}, \label{eq:curlfree_kernel}
	\end{align}
\end{subequations}
in which the scalar $t$ and the matrices $\mathbf{Q}, \mathbf{R}, \mathbf{V}, \mathbf{W}$ are defined as in Lemma \ref{lem:vec_addition}. These matrix-valued kernels
are symmetric and have simple
Fourier-Legendre expansions as shown in Theorem \ref{thm:Four_expan_divcurl} preceded by Lemma \ref{lem:Sxy}. 

\begin{lemma}\label{lem:Sxy}
	Suppose $\psi \in C[-1,1]$ such that $\sum_{\ell=1}^\infty \ell\,|\widehat{\psi}(\ell)| < \infty$. Let $\mathcal{S}_\ell(x,y)$ and $\mathcal{T}_\ell(x,y)$ be defined as in Lemma~\ref{lem:vec_addition}. Then the two series of matrix-valued functions
	\[
	S(x,y) := \sum_{\ell=1}^\infty \widehat{\psi}(\ell)\mathcal{S}_\ell(x,y), \quad T(x,y) := \sum_{\ell=1}^\infty \widehat{\psi}(\ell)
	\mathcal{T}_\ell(x,y),\]
	converge absolutely and uniformly on $\mathbb{Y}:=\left(\mathbb{S}^2\times \mathbb{S}^2\right) \setminus \{(x,y): x = \pm y\}.$
\end{lemma}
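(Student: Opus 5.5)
The plan is to bound the spectral norms of $\mathcal{S}_\ell(x,y)$ and $\mathcal{T}_\ell(x,y)$ uniformly in $(x,y)$ by a constant multiple of $\ell$, and then apply the Weierstrass $M$-test. The explicit formulas of Lemma~\ref{lem:vec_addition} are not needed for this. They involve $P_\ell''$, which is large near $t=\pm1$, and that is why the statement excludes the set $x=\pm y$. An intrinsic positivity argument should give a bound valid on all of $\mathbb{S}^2\times\mathbb{S}^2$, and in particular on $\mathbb{Y}$.

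\emph{Step 1 (Cauchy--Schwarz for the tensor sums).} Fix $x,y\in\mathbb{S}^2$ and unit vectors $a,b\in\mathbb{R}^3$. Then
\[
a^\top\mathcal{S}_\ell(x,y)b=\sum_{k=1}^{2\ell+1}\big(a^\top\mathbf{y}_{\ell,k}(x)\big)\big(\mathbf{y}_{\ell,k}(y)^\top b\big),
\]
so by Cauchy--Schwarz in $k$ and $|a^\top\mathbf{y}_{\ell,k}(x)|\le|\mathbf{y}_{\ell,k}(x)|$,
\[
|a^\top\mathcal{S}_\ell(x,y)b|\le\Big(\sum_{k}|\mathbf{y}_{\ell,k}(x)|^2\Big)^{1/2}\Big(\sum_k|\mathbf{y}_{\ell,k}(y)|^2\Big)^{1/2}.
\]
The same bound holds for $\mathcal{T}_\ell$ with $\mathbf{z}_{\ell,k}$ in place of $\mathbf{y}_{\ell,k}$.

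\emph{Step 2 (vector addition theorem on the diagonal).} I will show that $F_\ell(x):=\sum_k|\mathbf{y}_{\ell,k}(x)|^2$ is constant on $\mathbb{S}^2$ and equal to $\frac{2\ell+1}{4\pi}$. Constancy follows from rotation invariance. For a rotation $A$, the functions $Y_{\ell,k}\circ A$ are again an orthonormal basis of $\mathcal{H}_\ell$, related to the $Y_{\ell,k}$ by an orthogonal matrix. Moreover $\mathbf{L}_*$ commutes with rotations, in the sense that $\mathbf{L}_*(f\circ A)(x)=A^\top(\mathbf{L}_*f)(Ax)$. Hence $F_\ell(Ax)=F_\ell(x)$.

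To find the constant, I integrate over the sphere: $\int_{\mathbb{S}^2}F_\ell\,\mathrm{d}\mu=\sum_k\|\mathbf{y}_{\ell,k}\|^2=2\ell+1$. This gives $F_\ell\equiv(2\ell+1)/(4\pi)$, and the identical argument with $\nabla_*$ handles the $\mathbf{z}_{\ell,k}$.

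Alternatively, one could take the trace of $\mathcal{S}_\ell(x,x)$ from Lemma~\ref{lem:vec_addition}, using $t=1$, $\mathbf{Q}=0$, $\operatorname{tr}\mathbf{R}=2$ and $P_\ell'(1)=\ell(\ell+1)/2$. This gives the same value. I would include it as a check, since it depends on the lemma's normalization.

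\emph{Step 3 ($M$-test).} Steps 1--2 give
\[
\|\mathcal{S}_\ell(x,y)\|_2,\ \|\mathcal{T}_\ell(x,y)\|_2\le\frac{2\ell+1}{4\pi}\le\frac{3\ell}{4\pi}\qquad(\ell\ge1)
\]
for all $(x,y)$. Therefore
\[
\sum_{\ell\ge1}|\widehat\psi(\ell)|\,\|\mathcal{S}_\ell(x,y)\|_2\le\frac{3}{4\pi}\sum_{\ell\ge1}\ell|\widehat\psi(\ell)|<\infty
\]
uniformly in $(x,y)$, and the same holds for $\mathcal{T}_\ell$. Entrywise absolute and uniform convergence on $\mathbb{S}^2\times\mathbb{S}^2\supset\mathbb{Y}$ then follows from the Weierstrass $M$-test.

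The main point needing care is Step 2, specifically the equivariance of $\mathbf{L}_*$ and $\nabla_*$ under rotations. If that is awkward to verify, the fallback is a direct estimate from Lemma~\ref{lem:vec_addition} using the classical Legendre bounds
\[
|P_\ell'(t)|\lesssim\ell^{1/2}(1-t^2)^{-3/4},\qquad |P_\ell''(t)|\lesssim\ell^{3/2}(1-t^2)^{-5/4},
\]
together with $\|\mathbf{Q}\|=\|\mathbf{V}\|=1-t^2$ and $\|\mathbf{R}\|,\|\mathbf{W}\|\le2$. This yields $\|\mathcal{S}_\ell\|,\|\mathcal{T}_\ell\|\lesssim\ell^{1/2}(1-t^2)^{-3/4}$. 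The fallback only gives uniform convergence on sets where $|t|\le1-\delta$, which is presumably why the statement is phrased on $\mathbb{Y}$.
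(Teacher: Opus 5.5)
Your proof is correct, and it follows a genuinely different route from the paper's.

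\textbf{The paper's route.} The paper never bounds $\|\mathcal{S}_\ell(x,y)\|_2$ directly. It inserts the Legendre equation $P_\ell''(t)=(1-t^2)^{-1}\big[2tP_\ell'(t)-\ell(\ell+1)P_\ell(t)\big]$ into the formula of Lemma~\ref{lem:vec_addition}. It then uses only the crude bounds $|P_\ell|\le 1$ and $|P_\ell'|\le \ell(\ell+1)/2$ to get absolute and uniform convergence of the scalar series $\alpha(t)$ and $\zeta(t)$ on $[-1,1]$. From this it reads off the closed form $S(x,y)=\frac{2t\alpha(t)-\zeta(t)}{1-t^2}\mathbf{Q}+\alpha(t)\mathbf{R}$. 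The division by $1-t^2$ is the only reason $x=\pm y$ is excluded. Uniformity up to the diagonal tacitly rests on $\|\mathbf{Q}\|_2=1-t^2$, which the paper records only in the next proof.

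\textbf{What your route buys.} Your Cauchy--Schwarz step plus the diagonal addition theorem gives the intrinsic bound $\|\mathcal{S}_\ell(x,y)\|_2,\ \|\mathcal{T}_\ell(x,y)\|_2\le(2\ell+1)/(4\pi)$ on all of $\mathbb{S}^2\times\mathbb{S}^2$. This is exactly the ``order $\ell$'' spectral-norm estimate the paper quotes from Fuselier et al.\ at the start of Section~\ref{sec:Construction}. It yields a stronger conclusion, namely uniform convergence on the whole product. It also makes Step~II of Theorem~\ref{thm:Four_expan_divcurl} automatic: $S$ is then continuous everywhere and agrees with $\Psi^{\dive}$ on the dense set $\mathbb{Y}$. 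And it supplies the global uniform convergence that the paper attributes to ``Tauberian theorems''.

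\textbf{Smaller points.} Your trace check is by itself a complete proof of Step~2, since $\sum_k|\mathbf{y}_{\ell,k}(x)|^2=\mathrm{tr}\,\mathcal{S}_\ell(x,x)$ and Lemma~\ref{lem:vec_addition} is valid at $t=1$. So the rotation-equivariance argument can be dropped. What the paper's route gives and yours does not is the identification of the limit through $\alpha$ and $\zeta$, which is exactly what Theorem~\ref{thm:Four_expan_divcurl} matches against $\kappa$. Finally, your fallback would not prove the lemma as stated. It gives uniformity only on $\{|t|\le 1-\delta\}$, whereas uniform convergence on $\mathbb{Y}$ requires uniformity all the way up to the diagonal.
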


\begin{proof}
	We will prove the first result. The proof of the second is parallel. Define
	\begin{equation}\label{eq:defalp_zeta}
		\alpha(t) := \frac{1}{4\pi}\sum_{\ell=1}^\infty \frac{2\ell+1}{\ell(\ell+1)}\ \widehat{\psi}(\ell) P_\ell'(t),
		\quad
		\zeta(t) := \frac{1}{4\pi}\sum_{\ell=1}^\infty (2\ell+1)\ \widehat{\psi}(\ell) P_\ell(t), \quad t\in[-1,1].
	\end{equation}
	Using the obvious inequalities ${\displaystyle \max_{-1 \le t \le 1} |P_\ell(t)| \le 1}$ and ${\displaystyle \max_{-1 \le t \le 1} |P_\ell'(t)| \le \ell(\ell+1)/2}$, we have
	\[
	(2\ell+1)\ \left|\widehat{\psi}(\ell)\right| \max_{-1 \le t \le 1}\left|\ P_\ell(t)\right|
	\le C\,\ell\,|\widehat{\psi}(\ell)|,
	\quad
	\frac{2\ell+1}{\ell(\ell+1)}\left|\widehat{\psi}(\ell)\right|\  \max_{-1 \le t \le 1}\left|P_\ell'(t)\right|
	\le C\,\ell\,|\widehat{\psi}(\ell)|.
	\]
	Here $C>0$ is a constant independent of $\ell$ and $t$. Since $\sum_{\ell\ge 1}\ell\,|\widehat{\psi}(\ell)|<\infty$, both series in \eqref{eq:defalp_zeta} converge absolutely and uniformly on $[-1,1]$. Therefore, $\alpha$ and $\zeta$ are continuous on $[-1,1]$.
	
	Recall that the Legendre polynomial $P_\ell$
	satisfies the following differential equation:
	\begin{equation}\label{legendre}
		P_\ell''(t) =(1-t^2)^{-1} \left[2tP_\ell'(t) - \ell(\ell+1)P_\ell(t)\right], \quad t \in (-1,1).
	\end{equation}
	For $(x,y) \in\mathbb{Y}$ and therefore $t = x\cdot y \in (-1,1)$, we use  
	Lemma~\ref{lem:vec_addition} and Equation \eqref{legendre} to write
	\begin{align}\label{alpha-zeta}
		S(x,y) = &\frac{1}{4\pi}\sum_{\ell=1}^\infty\frac{2\ell+1}{\ell(\ell+1)} \widehat{\psi}(\ell)\,
		\Big(P_\ell''(t)\mathbf{Q} + P_\ell'(t)\mathbf{R}\Big)\\ \nonumber
		= &\frac{2t\alpha(t) - \zeta(t)}{1-t^2}\mathbf{Q} + \alpha(t)\mathbf{R}.
	\end{align}
	This proves the desired result.
\end{proof}

\begin{theorem}\label{thm:Four_expan_divcurl}
	Under the assumptions of Lemma \ref{lem:Sxy},
	we have
	\begin{align}
		&\Psi^{\dive}(x,y) = \sum_{\ell=1}^\infty \widehat{\psi}(\ell)\sum_{k=1}^{2\ell+1}\mathbf{y}_{\ell,k}(x) \mathbf{y}_{\ell,k}(y)^{\top},\label{eq:Four_divekernel}
		\\
		&\Psi^{\curl}(x,y) = \sum_{\ell=1}^\infty \widehat{\psi}(\ell)\sum_{k=1}^{2\ell+1}\mathbf{z}_{\ell,k}(x) \mathbf{z}_{\ell,k}(y)^{\top},\label{eq:Four_curlkernel}
	\end{align}
	in which the two series of matrix-valued functions converge absolutely and uniformly on $\mathbb{S}^2 \times \mathbb{S}^2$.
\end{theorem}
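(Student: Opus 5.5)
The plan is to identify the auxiliary function $\kappa$ with the function $\alpha$ of \eqref{eq:defalp_zeta}. Once $\kappa=\alpha$ and $\kappa'(t)=\frac{2t\alpha(t)-\zeta(t)}{1-t^2}$ on $(-1,1)$, the representation \eqref{alpha-zeta} in the proof of Lemma~\ref{lem:Sxy} shows that $\Psi^{\dive}=S$ and $\Psi^{\curl}=T$ on $\mathbb{Y}$. It then remains to upgrade this to uniform convergence on all of $\mathbb{S}^2\times\mathbb{S}^2$.

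\textbf{Identifying $\zeta$.} Since $\widehat{\psi}(0)=1$ and $P_0\equiv 1$, the expansion \eqref{eq:Four-LegendreSeries} gives
\[
\zeta(t)=\psi(t)-\tfrac{1}{4\pi}.
\]
The series converges uniformly because $\sum_{\ell}(2\ell+1)|\widehat{\psi}(\ell)|<\infty$, and $\psi$ is continuous, so this holds pointwise on $[-1,1]$.

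\textbf{Identifying $\kappa$.} I will use the Sturm--Liouville form of \eqref{legendre}, namely $\big((1-t^2)P_\ell'\big)'=-\ell(\ell+1)P_\ell$. Consider the series for $(1-t^2)\alpha(t)$. Differentiating it term by term gives $-\frac{1}{4\pi}\sum_{\ell}(2\ell+1)\widehat{\psi}(\ell)P_\ell=-\zeta$, and this derivative series converges uniformly, so the differentiation is legitimate. Hence
\[
\frac{d}{dt}\big[(1-t^2)\alpha(t)\big]=\tfrac{1}{4\pi}-\psi(t).
\]
The left-hand quantity vanishes at $t=-1$, so integrating gives $(1-t^2)\alpha(t)=\frac{1}{4\pi}\int_{-1}^t(1-4\pi\psi(s))\,\mathrm{d}s=\frac{\beta(t)}{4\pi}$. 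Therefore $\kappa=\alpha$ on $(-1,1)$, and by continuity of both functions on $[-1,1]$.

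\textbf{Identifying $\kappa'$.} Expanding the derivative identity gives $-2t\alpha+(1-t^2)\alpha'=-\zeta$. This yields $\kappa'(t)=\alpha'(t)=\frac{2t\alpha(t)-\zeta(t)}{1-t^2}$ on $(-1,1)$. Comparing with \eqref{alpha-zeta} shows $\Psi^{\dive}(x,y)=S(x,y)$ for $(x,y)\in\mathbb{Y}$. The curl-free case is identical, using $\mathbf{V},\mathbf{W}$ and the formula for $\mathcal{T}_\ell$ in Lemma~\ref{lem:vec_addition}.

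\textbf{Uniform convergence on all of $\mathbb{S}^2\times\mathbb{S}^2$.} This is the step I expect to be the main obstacle, because Lemma~\ref{lem:Sxy} excludes the set $x=\pm y$. I will bound each term directly rather than through $P_\ell''$. By Cauchy--Schwarz in $k$, the operator norm satisfies
\[
\|\mathcal{S}_\ell(x,y)\|\le\Big(\sum_k|\mathbf{y}_{\ell,k}(x)|^2\Big)^{1/2}\Big(\sum_k|\mathbf{y}_{\ell,k}(y)|^2\Big)^{1/2}.
\]
Moreover $\sum_k|\mathbf{y}_{\ell,k}(x)|^2=\operatorname{tr}\mathcal{S}_\ell(x,x)$. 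This can be evaluated from Lemma~\ref{lem:vec_addition} at $t=1$, where $\mathbf{Q}=0$, $\operatorname{tr}\mathbf{R}=2$ and $P_\ell'(1)=\ell(\ell+1)/2$, giving the value $\frac{2\ell+1}{4\pi}$. Alternatively one can use rotational invariance together with orthonormality. Either way $\|\widehat{\psi}(\ell)\mathcal{S}_\ell\|\le\frac{2\ell+1}{4\pi}|\widehat{\psi}(\ell)|$, and the same bound holds for $\mathcal{T}_\ell$. These bounds are summable by hypothesis, so the Weierstrass M-test gives absolute and uniform convergence on $\mathbb{S}^2\times\mathbb{S}^2$, and $S$ and $T$ are continuous there.

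\textbf{Values at $x=\pm y$.} The formulas \eqref{eq:proposed_kernels} involve $\kappa'$, which need not extend continuously to $\pm1$. At these points $\mathbf{Q}$ and $\mathbf{V}$ vanish, so I will interpret $\Psi^{\dive}$ and $\Psi^{\curl}$ on $\{x=\pm y\}$ as their continuous extensions from the dense set $\mathbb{Y}$. These extensions exist and equal $S$ and $T$ by the uniform convergence just established, which proves \eqref{eq:Four_divekernel} and \eqref{eq:Four_curlkernel}.
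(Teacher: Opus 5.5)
Your proposal is correct. Its first half is the paper's Step~I: you show $\kappa=\alpha$ and $\kappa'=(2t\alpha-\zeta)/(1-t^2)$ on $(-1,1)$ from the Sturm--Liouville form of the Legendre equation, hence $\Psi^{\dive}=S$ and $\Psi^{\curl}=T$ on $\mathbb{Y}$. Differentiating the series for $(1-t^2)\alpha$ term by term, instead of integrating the Legendre equation from $-1$ to $t$, is only a cosmetic change.

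The second half takes a genuinely different route. The paper works on the closed-form side. It shows $(1-t^2)\kappa'(t)=2t\kappa(t)-\zeta(t)\to\pm2\kappa(\pm1)-\zeta(\pm1)=0$ as $t\to\pm1$, and uses $\|\mathbf{Q}(x,y)\|_2=1-t^2$ to get $\kappa'\mathbf{Q}\to 0$, so $\Psi^{\dive}$ extends continuously. It then only asserts that absolute and uniform convergence on all of $\mathbb{S}^2\times\mathbb{S}^2$ follows from ``Tauberian theorems.''

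You work on the series side. The Cauchy--Schwarz bound
$\|\mathcal{S}_\ell(x,y)\|\le\big(\operatorname{tr}\mathcal{S}_\ell(x,x)\big)^{1/2}\big(\operatorname{tr}\mathcal{S}_\ell(y,y)\big)^{1/2}=\frac{2\ell+1}{4\pi}$
is the diagonal identity the paper itself invokes later in the $\bld{L}_\infty$ estimate. Together with the Weierstrass M-test it gives absolute and uniform convergence outright. Continuity of $S$ plus density of $\mathbb{Y}$ then yields the continuous extension of $\Psi^{\dive}$, with no endpoint limit to compute. Your route therefore actually proves the convergence claim the paper leaves to a hand-wave. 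It also shows that excluding $x=\pm y$ in Lemma~\ref{lem:Sxy} is unnecessary.

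What the paper's route buys is explicit knowledge of the extension. The $\kappa'\mathbf{Q}$ and $\kappa'\mathbf{V}$ terms tend to $0$, so for instance $\Psi^{\dive}(x,\pm x)=\kappa(\pm1)\mathbf{R}(x,\pm x)$, which is what one uses when evaluating the kernel. Your argument implies this too, because $(1-t^2)\kappa'(t)=2t\kappa(t)-\zeta(t)$ has a limit. A nonzero limit would make
$\kappa'(t)\mathbf{Q}(x,y)=-(1-t^2)\kappa'(t)\,\widehat{n}\widehat{n}^{\top}$, with $\widehat{n}=(x\times y)/\|x\times y\|_2$,
depend on the direction of approach, contradicting continuity. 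It is worth saying so explicitly if you want the diagonal values in closed form.
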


\begin{proof}
	We will prove \eqref{eq:Four_divekernel}; the proof of \eqref{eq:Four_curlkernel} is analogous. Throughout the proof, we refer to and make use of the pertinent equations and notations in the statement and proof of Lemma~\ref{lem:Sxy}.  We will break the proof in two steps. 
	\medskip
	
	\noindent Step I, we prove that  $\Psi^{\dive}(x,y) = S(x,y),\, \forall\  (x,y) \in \mathbb{Y}$.
	By \eqref{eq:divfree_kernel},  \eqref{eq:def_kappa}, and \eqref{alpha-zeta}, it suffices to show that 
	\begin{equation}\label{kappa-derivative}
		\kappa(t)=\alpha(t),\quad \kappa'(t)=\frac{2t\alpha(t) - \zeta(t)}{1-t^2}, \quad t \in (-1,1).  
	\end{equation}
	Starting with the Legendre equation,
	\[
	\frac{d}{dt}\big((1-t^2)P_\ell'(t)\big) = -\ell(\ell+1)P_\ell(t),
	\]
	we multiply both sides of the equation by $\frac{2\ell+1}{4\pi\,\ell(\ell+1)}\widehat{\psi}(\ell)$ and sum over $\ell\ge 1$. We then integrate from $-1$ to $t$ the resulted series term-by-term to obtain
	\begin{equation} \label{1-x2}
		(1-t^2)\alpha(t)
		= \frac{(1-t^2)}{4\pi}\sum_{\ell=1}^\infty \frac{2\ell+1}{\ell(\ell+1)}\  \widehat{\psi}(\ell) P_\ell'(t)
		= -\int_{-1}^t \zeta(s)\,ds. 
	\end{equation}
	Use the identity $\zeta(s)=\psi(s)-\frac{1}{4\pi}$
	to rewrite \eqref{1-x2} as
	\[
	(1-t^2)\alpha(t)
	= -\int_{-1}^t\Big(\psi(s)-\frac{1}{4\pi}\Big)\,ds
	= \frac{1}{4\pi}\int_{-1}^{t}\big(1-4\pi\psi(s)\big)\,ds.
	\]
	This shows that $\alpha(t)=\kappa(t)$ for all $t\in(-1,1)$, which is the first equation in \eqref{kappa-derivative}.
	Differentiating \eqref{1-x2} side-by-side  yields $(1-t^2)\kappa'(t) - 2t\kappa(t) = -\zeta(t)$, 
	which, for $t\in(-1,1)$,  is equivalent to the second equation of \eqref{kappa-derivative}. 
	\medskip

	\noindent Step II, we show that $\Psi^{\dive}(x,y) $ extends to be a continuous matrix-valued function on $\mathbb{S}^2 \times \mathbb{S}^2$. It suffices to show that both terms  $\kappa'(t)\mathbf{Q}$  and $\kappa(t)\mathbf{R}$ on the right hand side of  \eqref{eq:divfree_kernel} extends continuously to $\mathbb{S}^2 \times \mathbb{S}^2$.  By the discussion following \eqref{eq:def_kappa}, the function $\kappa$ extends continuously to $[-1,1]$. Thus,  so does $\kappa \mathbf{R}$ to $\mathbb{S}^2 \times \mathbb{S}^2$.

	It remains to show that $\kappa' \mathbf{Q}$ extends continuously to $\mathbb{S}^2 \times \mathbb{S}^2$. 
	The identity
	\[
	(1-t^2)\kappa'(t)=2t\,\kappa(t)-\zeta(t)
	\]
	implies that $(1-t^2)\kappa'(t)$ extends continuously to $[-1,1]$. Moreover, we have
	\begin{equation} \label{important-limit}
		\lim_{t \rightarrow \pm 1} (1-t^2)\kappa'(t)=\lim_{t \rightarrow \pm 1}\left[2t \kappa(t)-\zeta(t)\right] =\pm 2 \kappa(\pm 1)-\zeta(\pm 1)  =0. 
	\end{equation}
	Since $\mathbf{Q}(x,y)$ is a rank-1 matrix, we have
	\[
	\|\mathbf{Q}(x,y)\|_2 = \|x \times y\|_2^2 = \sin^2 \theta =1 - t^2, 
	\]
	where $\theta$ is the smaller angle between the two vectors $x$ and $y$. It then follows from  \eqref{important-limit} that
	\[
	\lim_{(x,y) \rightarrow (x, \pm x)} \|\kappa'(t)\mathbf{Q}(x,y)\|_2   =\lim_{t \rightarrow \pm 1}(1-t^2)|\kappa'(t)|  =0. 
	\]
	(Here readers' discretion is advised that  
	the same notation $\| \cdot \|_2$ has been used to indicate the Euclidean norm of a vector as well as the spectral norm of a matrix.)
	This shows that $\Psi^{\dive}(x,y) $ extends continuously to $\mathbb{S}^2 \times \mathbb{S}^2$. The absolute and uniform convergence follows from a routine application of Tauberian theorems. 
\end{proof}

\begin{remark}
	For numerical stability, it is advantageous to absorb the term $(1-t^2)^{-1}$ into the matrix $\mathbf{Q}$. Defining the scaled matrix $\widetilde{\mathbf{Q}} := (1-t^2)^{-1} \mathbf{Q}$ 
	allows the divergence-free kernel to be evaluated as
	\begin{equation*}
		\Psi^{\dive}(x,y) = \Big(\frac{1}{4\pi}-\psi(t)+2t\kappa(t)\Big)\widetilde{\mathbf{Q}} + \kappa(t)\mathbf{R},
	\end{equation*}
	which avoids explicit cancellation errors near the diagonal $x=y$.
\end{remark}

The new construction method generalizes and improves upon the existing vector spherical harmonic methods for divergence-free and curl-free approximation discussed in \cite{LeGia_2021ToMS_algorithm,Wahba_1982Inbook_vector}. 
Furthermore, the incorporation of  scaled zonal kernels offers flexibility for multiscale settings and provides additional robustness of the quasi-interpolation algorithm.

Let $N \in \mathbb{N}$. Let $\balpha_j\ (1 \le j \le N)$ be tangential vectors. For any $N$ distinct points $x_j \in \mathbb{S}^2,$ we have
by Theorem \ref{thm:Four_expan_divcurl} that
\begin{equation}
	\begin{aligned}
		\sum^N_{j,k}\balpha_k^\top \Psi^{\dive}(x_k,x_j)\balpha_j &= \sum_{j,k}\sumone\widehat{\psi}(\ell)\sum_{k=1}^{2\ell+1}\balpha_k^\top \by_{\ell,k}(x_k)\by^\top_{\ell,k}(x_j)\balpha_j\\
		&= \sumone\widehat{\psi}(\ell)\sum_{k=1}^{2\ell+1}\left|\sum_{j=1}^N\balpha_j^\top\by_{\ell,k}(x_j)\right|^2.
	\end{aligned}
\end{equation}
Hence, the matrix-valued kernel $\Psi^{\dive}(x,y)$ is (strictly) positive definite whenever the scalar kernel $\psi(x,y)$ is.


The native space associated with $\Psi^{\dive}$ is defined as
\begin{equation}\label{eq:NativeSpace}
	\calN_{\Psi^{\dive}} := \left\{\bff \in \bH_{\dive}^0(\bbS^2) : \sumlk \frac{|\hatdiv{\bff}|^2}{\widehat{\psi}(\ell)} < \infty \right\}.
\end{equation}
Under the decay condition $\widehat{\psi}(\ell) \sim (1+\lambda_\ell)^{-\sigma}$,  the two spaces  $\calN_{\Psi^{\dive}}, \bH_{\dive}^\sigma$ are norm equivalent. 
The native space for $\Psi^{\curl}$ is defined analogously. The native space  of the combined kernel $\Psi$ is defined to be the closure of $\bld{L}_2$ vector-valued functions under norm induced by the inner product
\begin{align*}
	\ip{\bff,\bgg}_{\calN_\Psi} = \sumlk \left( \frac{\hatdiv{\bff}\hatdiv{\bgg}}{\widehat{\psi}(\ell)} + \frac{\hatcurl{\bff}\hatcurl{\bgg}}{\widehat{\psi}(\ell)} \right).
\end{align*}

\subsection{Kernels}
We focus on kernels derived from the restriction of radial basis functions on $\mathbb{R}^3$ to the sphere $\mathbb{S}^2$, specifically the Poisson, Gaussian, and compactly supported Wendland families. Previous analyses \cite{Sun_2025IMAJNA_spherical,Sun_2025_monte} establish that the Poisson kernel satisfies Assumption \ref{assump:kernel} with $m=1$, while the Gaussian and Wendland kernels satisfy the assumption with $m=2$. Kernels of higher regularity may be constructed via linear combinations of these base functions (see, e.g., \cite[Prop.~3.10]{Sun_2025IMAJNA_spherical} and \cite[Sec.~4]{Franz_2023IMAJNA_multilevel}).

To expedite numerical implementation of the divergence/curl-free kernels defined in \eqref{eq:divfree_kernel} and \eqref{eq:curlfree_kernel}, 
we provide closed form solutions of the auxiliary functions $\kappa(t)$ for the most commonly employed radial kernels 
in Table \ref{tab:SBFs}.

\begin{table}[htbp]
	\centering
	\caption{Definitions of the scalar kernels $\psi_\rho$ and auxiliary functions $\kappa$. For the Wendland functions, the radial distance is defined as $r = \frac{1}{\rho}\sqrt{2-2t}$.}
	\label{tab:SBFs}
	\renewcommand{\arraystretch}{2.0}
	\setlength{\tabcolsep}{6pt}
	\small 
	\begin{tabular}{l l l}
		\toprule
		\textbf{Type} & \multicolumn{1}{c}{$\psi_\rho(t)$} & \multicolumn{1}{c}{$\kappa(t)$} \\
		\midrule
		Poisson 
		& $\displaystyle \frac{1-\alpha^2}{4\pi(1-2\alpha t+\alpha^2)^{3/2}}$ 
		& $\displaystyle \frac{1+\alpha t - \frac{1-\alpha^2}{\sqrt{1-2\alpha t+\alpha^2}}}{4\pi\alpha(1-t^2)},~\alpha = 1-\rho$ \\\addlinespace\hline
		\addlinespace
		Gaussian
		& $\displaystyle \frac{1}{2\pi\rho^2} e^{-\frac{1-t}{\rho^2}}$ 
		& $\displaystyle \frac{(1+t)(1-e^{-\frac{2}{\rho^2}}) -2e^{-\frac{1-t}{\rho^2}}+2e^{-\frac{2}{\rho^2}}}{4\pi (1-t^2)}$ \\\addlinespace\hline
		\addlinespace
		$\text{WE}_{3,1}$
		& $\displaystyle \frac{7(1-r)_+^{4}(4r+1)}{\pi \rho^2}$ 
		& $\displaystyle \frac{1+t-2(1-r)_+^{5}(8r^{2}+5r+1)}{4\pi (1-t^2)}$ \\\addlinespace\hline
		\addlinespace
		$\text{WE}_{3,2}$
		& $\displaystyle \frac{3(1-r)_+^{6}(35r^{2}+18r+3)}{\pi \rho^2}$ 
		& $\displaystyle \frac{1+t-2(1-r)_+^{7}(21r^{3}+19r^{2}+7r+1)}{4\pi (1-t^2)}$ \\
		\bottomrule
	\end{tabular}
\end{table}



\section{Vector field quasi-interpolation on the sphere}
\label{sec:quasi-interp}

This section develops our new quasi-interpolation method for vector field approximation of tangential vector fields on $\mathbb{S}^2$, which provides a direct approximation of the Helmholtz–Hodge decomposition without the computational cost associated with solving large linear systems as opposed to the existing kernel-based interpolation.

\subsection{Vector quasi-interpolation}
We begin with a scaled zonal kernel $\scaleKer$ derived from the restriction on $\mathbb{S}^2$ of a radial basis function on $\mathbb{R}^3$, satisfying Assumption \ref{assump:kernel}. We define the matrix- kernel $\Psi_\rho$ as the superposition of its divergence/curl-free components:
\begin{equation}\label{eq:Matrix_kernel}
	\Psi_\rho(x,y) = \Psi^\dive_\rho(x,y) + \Psi^\curl_\rho(x,y).
\end{equation}
For a target tangent vector field $\bff \in \bld{L}_2(\bbS^2)$, we define the following vector-valued integral operator $\calC_{\Psi_\rho}$ on $\bld{L}_2(\bbS^2)$:
\begin{equation}\label{eq:ConvOperator1}
	(\calC_{\Psi_\rho}\bff)(x) := (\Psi_\rho * \bff)(x) = \int_{\bbS^2} \Big(\Psi_\rho^{\dive}(x,y) + \Psi_\rho^{\curl}(x,y)\Big) \bff(y) \dmu(y).
\end{equation}
Here the integral is carried out componentwise.

\begin{definition}\label{multiplier-def}
	A linear operator $\mathcal{T}$ on $\bld{L}_2(\mathbb{S}^2)$ is called a Fourier-Legendre multiplier, if for all $\ell$ and $k$, there exist $a_{\ell,k}, b_{\ell,k} \in \R$, such that
	\[
	\widehat{\mathcal{T}(\mathbf{y}_{\ell,k})}(\ell,k) = a_{\ell,k}, \quad \widehat{\mathcal{T}(\mathbf{z}_{\ell,k})}(\ell,k) = b_{\ell,k}.
	\]
	A Fourier-Legendre multiplier is said to be simple if the numbers $a_{\ell,k}, b_{\ell,k}$ in the equations above are independent of $k.$
\end{definition}

\begin{proposition}\label{multiplier}
	The integral operator defined in \eqref{eq:ConvOperator1} is a simple Fourier-Legendre multiplier. 
\end{proposition}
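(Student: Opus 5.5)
We plan to reduce everything to the Fourier--Legendre expansions in Theorem~\ref{thm:Four_expan_divcurl}, applied to the scaled kernel $\psi_\rho$. We assume, as in the setting of Lemma~\ref{lem:Sxy}, that $\psi_\rho$ is continuous with $\sum_{\ell\ge1}\ell\,|\widehat{\psi}_\rho(\ell)|<\infty$; this holds for the Poisson, Gaussian and Wendland kernels of Table~\ref{tab:SBFs}. Adding \eqref{eq:Four_divekernel} and \eqref{eq:Four_curlkernel} gives
\begin{equation*}
\Psi_\rho(x,y)=\sum_{\ell=1}^\infty \widehat{\psi}_\rho(\ell)\sum_{k=1}^{2\ell+1}\Big(\by_{\ell,k}(x)\by_{\ell,k}(y)^\top+\bz_{\ell,k}(x)\bz_{\ell,k}(y)^\top\Big),
\end{equation*}
with absolute and uniform convergence on $\bbS^2\times\bbS^2$.

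Since the convergence is uniform, we can integrate term by term against any $\bff\in\bld{L}_2(\bbS^2)$; this is legitimate because $\bbS^2$ has finite measure and $\bld{L}_2\subset\bld{L}_1$. Using $\int_{\bbS^2}\by_{\ell,k}(y)^\top\bff(y)\,\dmu(y)=\hatdiv{\bff}$ and the analogous identity for $\bz_{\ell,k}$, we obtain
\begin{equation*}
(\calC_{\Psi_\rho}\bff)(x)=\sumlk \widehat{\psi}_\rho(\ell)\Big(\hatdiv{\bff}\,\by_{\ell,k}(x)+\hatcurl{\bff}\,\bz_{\ell,k}(x)\Big).
\end{equation*}
This function is tangential, since each $\by_{\ell,k}$ and $\bz_{\ell,k}$ is. It lies in $\bld{L}_2(\bbS^2)$ because the coefficients $\widehat{\psi}_\rho(\ell)$ are bounded: $\sum_\ell \ell|\widehat{\psi}_\rho(\ell)|<\infty$ forces them to tend to zero. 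Hence $\calC_{\Psi_\rho}$ is a bounded operator on $\bld{L}_2(\bbS^2)$.

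Now take $\bff=\by_{\ell,k}$. By orthonormality of the combined family $\{\by_{\ell',k'},\bz_{\ell',k'}\}$, its only nonzero coefficient is $\hatdiv{\bff}=1$ at index $(\ell,k)$. Therefore $\calC_{\Psi_\rho}\by_{\ell,k}=\widehat{\psi}_\rho(\ell)\by_{\ell,k}$, and likewise $\calC_{\Psi_\rho}\bz_{\ell,k}=\widehat{\psi}_\rho(\ell)\bz_{\ell,k}$. So in Definition~\ref{multiplier-def} we get $a_{\ell,k}=b_{\ell,k}=\widehat{\psi}_\rho(\ell)$. These depend only on $\ell$, so the multiplier is simple; moreover, $\calC_{\Psi_\rho}$ does not mix the divergence-free and curl-free parts.

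The only delicate step is justifying the term-by-term integration, and that rests entirely on the uniform convergence asserted in Theorem~\ref{thm:Four_expan_divcurl}. If one wants to avoid relying on the Tauberian remark in that theorem, there is a fallback. Restricted to $\mathbb{Y}$, Lemma~\ref{lem:Sxy} already gives absolute and uniform convergence of the series to $\Psi_\rho$, and the excluded set $\{x=\pm y\}$ has measure zero for each fixed $x$. One can then apply dominated convergence, since the partial sums are bounded by $C\sum_\ell \ell|\widehat{\psi}_\rho(\ell)|$; this gives the same coefficient identity.
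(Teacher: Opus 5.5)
Your proof is correct and follows the same route as the paper: substitute the expansions of Theorem~\ref{thm:Four_expan_divcurl}, interchange sum and integral, and use orthonormality of $\{\by_{\ell,k},\bz_{\ell,k}\}$ to obtain $\calC_{\Psi_\rho}\by_{\ell,k}=\widehat{\psi}_\rho(\ell)\by_{\ell,k}$ and $\calC_{\Psi_\rho}\bz_{\ell,k}=\widehat{\psi}_\rho(\ell)\bz_{\ell,k}$. Your version is somewhat more careful than the paper's: you justify the interchange and treat a general $\bff$, and you correctly keep the factor $\widehat{\psi}_\rho(\ell)$ in the curl-free case, which the paper's final line drops.
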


\begin{proof} In \eqref{eq:ConvOperator1}, set $\bff =\mathbf{y}_{\ell',k'} $ for a prefixed pair $(\ell', k')$. By \eqref{eq:Four_divekernel} and \eqref{eq:Four_curlkernel},
	we write 
	\begin{align*}
		(\calC_{\Psi_\rho}\bff)(x) = &~\int_{\bbS^2} \Big(\Psi_\rho^{\dive}(x,y) + \Psi_\rho^{\curl}(x,y)\Big) \bff(y) \dmu(y)\\
		=&~\mathop{\mathlarger{\int}}_{\bbS^2}
		\left\{\sum_{\ell=1}^\infty \widehat{\psi}(\ell)\sum_{k=1}^{2\ell+1}\mathbf{y}_{\ell,k}(x) \mathbf{y}_{\ell,k}(y)^{\top}+\sum_{\ell=1}^\infty \widehat{\psi}(\ell)\sum_{k=1}^{2\ell+1}\mathbf{z}_{\ell,k}(x) \mathbf{z}_{\ell,k}(y)^{\top}\right\}\bff(y) \dmu(y)\\
		=	&~	\sum_{\ell=1}^\infty \widehat{\psi}(\ell)\sum_{k=1}^{2\ell+1}\mathbf{y}_{\ell,k}(x) \int_{\bbS^2}\mathbf{y}_{\ell,k}(y)^{\top}\mathbf{y}_{\ell',k'}(y)\dmu(y) \\
		& +\sum_{\ell=1}^\infty \widehat{\psi}(\ell)\sum_{k=1}^{2\ell+1}\mathbf{z}_{\ell,k}(x) \int_{\bbS^2}\mathbf{z}_{\ell,k}(y)^{\top}\mathbf{y}_{\ell',k'}(y)\dmu(y)\\
		=&~\widehat{\psi}(\ell')\ \mathbf{y}_{\ell',k'}(x).
	\end{align*}
	Here we have used the orthonormality of the vector spherical harmonics. Similarly, we show that $\calC_{\Psi_\rho}(\mathbf{z}_{\ell',k'} )(x) = \mathbf{z}_{\ell',k'}(x)$.
\end{proof}

We will show in Section 5 that
these multiplier operators are approximate identities while $\rho \rightarrow 0$ and that the approximation power is  determined by the scaling parameter $\rho$ and a certain asymptotic property of
the Fourier-Legendre coefficients of the kernel employed therein.
\begin{definition}[\emph{QMC designs}]
	Let $N \in \mathbb{N}$ and $\sigma>1$. A sequence
	$X_N:=\{x_j\}_{j=1}^{N} \subset \bbS^2$  with $N\rightarrow\infty$ is called a sequence of \emph{QMC designs} for the Sobolev space $H^{\sigma}(\bbS^2)$  if there exists a constant $c_\sigma>0$, independent of $N$, such that
	\begin{equation}\label{eq:QMC_estimate}
		\sup_{f\in H^{\sigma}(\bbS^2)}\left|\int_{\bbS^2} f(x)\dmu(x)-\calI_{X_N}(f)\right| \leq c_\sigma N^{-\frac{\sigma}{2}}\|f\|_{H^\sigma(\bbS^2)},\quad \calI_{X_N}(f):=\frac{4\pi}{N}\sum_{j=1}^N \bff(x_j).
	\end{equation}
\end{definition} 

This definition was introduced by Brauchart et al. \cite{Brauchart_2014MCoM_qmc}. Among various spherical quadrature rules developed in the literature \cite{Brauchart_2007ConApp_numerical,Hesse_2010Incollection_numerical,Sloan_2004AdvCM_extremal}, we employ exclusively QMC  designs to discretize the integral in \eqref{eq:ConvOperator1} 
en route to getting a vector-valued quasi-interpolant $\calL_{\Psi_\rho}$: 
\begin{equation}\label{eq:QMCQI}
	(\calL_{\Psi_\rho}\bff)(x) := \frac{4\pi}{N}\sum_{j=1}^{N}\Psi_\rho(x,x_j) \bff(x_j), \quad x \in \bbS^2.
\end{equation}
We remind an interested reader that a quasi-interpolant of the present form depends on an $N$-point set $X_N:=\{x_j\}_{j=1}^{N} \subset \bbS^2$ based on which we discretize the integral in \eqref{eq:ConvOperator1}. Because of the exclusivity of using QMC designs (in discretizing the underlying integral), we choose the neutral symbol $\calL_{\Psi_\rho}\bff$ to denote such an quasi-interpolant.


\subsection{Approximation of the Helmholtz–Hodge decomposition}
A significant advantage of the construction in \eqref{eq:ConvOperator1} is that it allows for the component-wise approximation of the Helmholtz–Hodge decomposition. Let $\calP_\dive$ and $\calP_\curl$ denote the orthogonal projections onto the divergence-free and curl-free subspaces of $\bld{L}_2(\bbS^2)$, respectively. The unique decomposition is given by
$$
\bff = \bff_{\dive} + \bff_{\curl}, \quad \text{where } \bff_{\dive} := \calP_\dive\bff, \quad \bff_{\curl} := \calP_\curl\bff.
$$
Due to the orthogonality of the vector spherical harmonics, the convolution operator $\calC_{\Psi_\rho}$ respects this decomposition. Specifically, the divergence/curl-free components of the approximation are given by
\begin{equation}\label{eq:ConvOperator2}
	(\calC_{\Psi_\rho}\bff)_{\dive} = \int_{\bbS^2}\Psi_\rho^{\dive}(x,y) \bff(y)\dmu(y), \quad
	(\calC_{\Psi_\rho}\bff)_{\curl} = \int_{\bbS^2}\Psi_\rho^{\curl}(x,y) \bff(y)\dmu(y).
\end{equation}
Discretizing the two integrals above, we obtain the divergence/curl-free components of the 
quasi-interpolant:
\begin{equation}\label{eq:QMCQI_components}
	(\calL_{\Psi_\rho}\bff)_{\dive} = \frac{4\pi}{N}\sum_{j=1}^{N}\Psi_\rho^{\dive}(x,x_j) \bff(x_j), \quad
	(\calL_{\Psi_\rho}\bff)_{\curl} = \frac{4\pi}{N}\sum_{j=1}^{N}\Psi_\rho^{\curl}(x,x_j) \bff(x_j).
\end{equation}
We use \eqref{eq:proposed_kernels} to rewrite \eqref{eq:QMCQI_components} in the following more executable format:
\begin{equation}\label{eq:explicit_div}
	(\calL_{\Psi_\rho}\bff)_{\dive}(x) = \frac{4\pi}{N}\sum_{j=1}^{N}\Biggl\{-\kappa'(x \cdot x_j)\Bigl[(x\times x_j)^\top \bff(x_j)\Bigr](x\times x_j) + \kappa(x \cdot x_j)\Bigl[(x \cdot x_j) \bff(x_j) - \big(x^\top \bff(x_j)\big)x_j\Bigr]\Biggr\};
\end{equation}
\begin{equation}\label{eq:explicit_curl}
	(\calL_{\Psi_\rho}\bff)_{\curl}(x) = \frac{4\pi}{N}\sum_{j=1}^{N}\Biggl\{\kappa'(x \cdot x_j)\big(x^\top\bff(x_j)\big)(x_j-(x \cdot x_j)x) + \kappa(x \cdot x_j)\Bigl[\bff(x_j) - \big(x^\top \bff(x_j)\big)x\Bigr]\Biggr\}.
\end{equation}

We emphasize here that with the availability of the vector-valued data $\bff(x_j)$ one can simply write down the desired quasi-interpolants as in 
Equations \eqref{eq:explicit_div} and \eqref{eq:explicit_curl}. In particular, there is no deed to evaluate the convolution integrals when implementing a quasi-interpolation algorithm. 
This contrasts sharply to the existing vector field interpolation method in which one must  invert a large and dense interpolation matrix. In the next section, we will investigate 
the approximation power and robustness of the quasi-interpolation method.


\section{Approximation Power and Robustness}
\label{sec:err}
We break the section into three subsections in which we discuss successively (1) Approximation properties of multiplier operators; (2) Quadrature errors; (3) Robustness of quasi-interpolation.


\subsection{Approximation properties of the convolution operator}

\begin{theorem}\label{thm:ConvErr_L2}
	Let $\bff \in \bH^{\sigma}(\bbS^2)$ be a tangent vector field, and let $\Psi_\rho$ be the matrix-valued kernel defined in \eqref{eq:Matrix_kernel} with the zonal kernel $\psi_{\rho}$ satisfying Assumption \ref{assump:kernel} with scaling parameter $0 < \rho <  1$ and $m\geq \sigma$. Then, for any $0 \leq \tau \leq \sigma$, there exists a constant $C > 0$, independent of $\rho$ and $\bff$, such that
	\begin{equation}\label{eq:sobolev_bound}
		\|\bff - \calC_{\Psi_\rho}\bff\|_{\bH^{\tau}(\bbS^2)} \leq C \rho^{\sigma-\tau} \|\bff\|_{\bH^{\sigma}(\bbS^2)}.
	\end{equation}
\end{theorem}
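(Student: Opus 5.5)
The plan is to work entirely on the Fourier--Legendre side, where by Proposition \ref{multiplier} and Theorem \ref{thm:Four_expan_divcurl} the operator $\calC_{\Psi_\rho}$ acts diagonally. It multiplies both $\mathbf{y}_{\ell,k}$ and $\mathbf{z}_{\ell,k}$ by the same factor $\widehat{\psi}_\rho(\ell)$. Hence for $\bff\in\bH^\sigma(\bbS^2)$,
\[
\widehat{(\bff-\calC_{\Psi_\rho}\bff)}^{\dive}_{\ell,k}=(1-\widehat{\psi}_\rho(\ell))\,\hatdiv{\bff},\qquad
\widehat{(\bff-\calC_{\Psi_\rho}\bff)}^{\curl}_{\ell,k}=(1-\widehat{\psi}_\rho(\ell))\,\hatcurl{\bff}.
\]
I will justify this termwise action for general $\bff\in\bld{L}_2$ by the $L_2$-boundedness of the operator. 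Its multipliers are bounded: $\widehat\psi_\rho(\ell)\le 1+c_1$ for $\ell\le\ell_\rho$ and $\le c_2$ beyond. Combined with continuity and density of finite expansions, this gives the claimed action. It follows that
\[
\|\bff-\calC_{\Psi_\rho}\bff\|^2_{\bH^\tau}=\sumlk(1+\lambda_\ell)^\tau|1-\widehat{\psi}_\rho(\ell)|^2\big(|\hatdiv{\bff}|^2+|\hatcurl{\bff}|^2\big).
\]

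Next I split the sum at $\ell_\rho=\lfloor\rho^{-1}-1\rfloor$.

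For the low frequencies $1\le\ell\le\ell_\rho$, I use \eqref{Assumption-1}: $|1-\widehat\psi_\rho(\ell)|^2\le c_1^2(\ell\rho)^{2m}$. Since $\ell\rho\le1$ and $m\ge\sigma$, this is at most $c_1^2(\ell\rho)^{2\sigma}$. This is the only place where $m\ge\sigma$ enters, and it is the key point. It also handles $\tau\le\sigma$ uniformly: writing $(\ell\rho)^{2\sigma}=\rho^{2(\sigma-\tau)}\ell^{2(\sigma-\tau)}(\ell\rho)^{2\tau}\le\rho^{2(\sigma-\tau)}\ell^{2(\sigma-\tau)}$, together with $\ell^{2(\sigma-\tau)}\le(1+\lambda_\ell)^{\sigma-\tau}$, gives the per-term bound
\[
(1+\lambda_\ell)^\tau|1-\widehat\psi_\rho(\ell)|^2\le c_1^2\rho^{2(\sigma-\tau)}(1+\lambda_\ell)^\sigma.
\]

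For the high frequencies $\ell>\ell_\rho$, I use \eqref{Assumption-2} to get $|1-\widehat\psi_\rho(\ell)|\le 1+c_2$. Here $\ell\ge\ell_\rho+1\ge\rho^{-1}-1$, so $1+\lambda_\ell\ge(\ell+1)^2\cdot\tfrac12\gtrsim\rho^{-2}$. Thus $(1+\lambda_\ell)^{\tau-\sigma}\le C\rho^{2(\sigma-\tau)}$, which yields
\[
(1+\lambda_\ell)^\tau|1-\widehat\psi_\rho(\ell)|^2\le C(1+c_2)^2\rho^{2(\sigma-\tau)}(1+\lambda_\ell)^\sigma.
\]

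Summing the two ranges and taking square roots gives \eqref{eq:sobolev_bound}, with $C$ depending only on $c_1,c_2,\sigma,\tau$.

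The main obstacle is a bookkeeping one: making sure the multiplier really is $\widehat\psi_\rho(\ell)$ on both families. The proof of Proposition \ref{multiplier} writes $\mathbf{z}$ mapping to itself, but Theorem \ref{thm:Four_expan_divcurl} gives the factor $\widehat\psi_\rho(\ell)$ on $\mathbf{z}_{\ell,k}$ as well, so I will rely on that. A second bookkeeping point is the boundary case $\ell_\rho=0$ (when $\rho>1/2$); there the low-frequency range is empty and the high-frequency estimate still holds, since $\rho$ is bounded below. The restriction $\rho<\rho_0$ in Assumption \ref{assump:kernel} is handled similarly: for $\rho_0\le\rho<1$, the bound is trivial from $L_2$-boundedness after enlarging $C$.
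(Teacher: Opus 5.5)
Your proposal is correct and follows the paper's proof essentially line by line: the multiplier identity for the $\bH^\tau$ error, the split at $\ell_\rho$, \eqref{Assumption-1} with $m\ge\sigma$ and $\rho\ell\le 1$ for the low frequencies, and boundedness of $\widehat\psi_\rho(\ell)$ together with $(1+\lambda_\ell)^{\tau-\sigma}\lesssim\rho^{2(\sigma-\tau)}$ for the high ones. Your extra bookkeeping is sound: the density argument for the termwise action, the correction that the $\mathbf{z}_{\ell,k}$ multiplier in the proof of Proposition \ref{multiplier} should be $\widehat\psi_\rho(\ell)$, and the $\ell_\rho=0$ case; note only that, like the paper, your high-frequency step reads \eqref{Assumption-2} as a bound on $|\widehat\psi_\rho(\ell)|$ rather than the one-sided bound literally stated.
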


\begin{proof}
	Recall that the eigenvalues of the vector-form Laplacian satisfy $\lambda_\ell = \ell(\ell+1) \sim \ell^2$. Using the Fourier series expansion of $\calC_{\Psi_\rho}\bff$, the squared Sobolev error can be written as
	\begin{equation}\label{eq:error_sum}
		\|\bff - \calC_{\Psi_\rho}\bff\|_{\bH^{\tau}(\bbS^2)}^2 = \sum_{\ell=1}^\infty \sum_{k=1}^{2\ell+1} (1+\lambda_\ell)^{\tau} |1-\hatphi(\ell)|^2 \left( |\hatdiv{\bff}|^2 + |\hatcurl{\bff}|^2 \right).
	\end{equation}
	We split the summation into two parts: $\ell \leq \ell_\rho$ and $\ell > \ell_\rho$ respectively, where $\ell_\rho \sim \rho^{-1}$ is the cut-off frequency associated with the scaling parameter.
	
	For the first part, we apply Assumption \ref{assump:kernel}, which guarantees $|1-\hatphi(\ell)| \leq C (\rho \ell)^{\sigma}$. Thus,
	\begin{align*}
		\mathcal{E}_{1} &:= \sum_{\ell \leq \ell_\rho} (1+\lambda_\ell)^{\tau} |1-\hatphi(\ell)|^2 \sum_{k=1}^{2\ell+1} \left( |\hatdiv{\bff}|^2 + |\hatcurl{\bff}|^2 \right) \\
		&\leq C \rho^{2\sigma} \sum_{\ell \leq \ell_\rho} \ell^{2\sigma} (1+\lambda_\ell)^{\tau} \sum_{k=1}^{2\ell+1} \left( |\hatdiv{\bff}|^2 + |\hatcurl{\bff}|^2 \right).
	\end{align*}
	Multiplying and dividing by $(1+\lambda_\ell)^{\sigma-\tau}$, we obtain
	\begin{equation*}
		\begin{aligned}
			\mathcal{E}_{1} \leq &~ C \rho^{2(\sigma-\tau)} \sum_{\ell \leq \ell_\rho} \underbrace{(\rho \ell)^{2\tau} \left(\frac{\ell^2}{1+\lambda_\ell}\right)^{\sigma-\tau}}_{\leq C} (1+\lambda_\ell)^{\sigma}  \sum_{k=1}^{2\ell+1} \left( |\hatdiv{\bff}|^2 + |\hatcurl{\bff}|^2 \right) \\
			\leq &~ C \rho^{2(\sigma-\tau)} \|\bff\|_{\bH^{\sigma}(\bbS^2)}^2,
		\end{aligned}
	\end{equation*}
	where we used the fact that $\rho \ell \leq C$ for $\ell \leq \ell_\rho$.
	
	For the second part, we utilize the uniform boundedness of the Fourier-Legendre coefficients, $|\hatphi(\ell)| \leq c_2$. Specifically, since $\ell > \ell_\rho$, we have $(1+\lambda_\ell)^{\tau-\sigma} \leq C \ell^{2(\tau-\sigma)} \leq C \rho^{2(\sigma-\tau)}$. Therefore,
	\begin{align*}
		\mathcal{E}_{2} &:= \sum_{\ell > \ell_\rho} (1+\lambda_\ell)^{\tau} |1-\hatphi(\ell)|^2 \sum_{k=1}^{2\ell+1} \left( |\hatdiv{\bff}|^2 + |\hatcurl{\bff}|^2 \right) \\
		&\leq 2(1+c_2^2) \sum_{\ell > \ell_\rho} (1+\lambda_\ell)^{\tau-\sigma} (1+\lambda_\ell)^{\sigma} \sum_{k=1}^{2\ell+1} \left( |\hatdiv{\bff}|^2 + |\hatcurl{\bff}|^2 \right) \\
		&\leq C \rho^{2(\sigma-\tau)} \|\bff\|_{\bH^{\sigma}(\bbS^2)}^2.
	\end{align*}
	Combining the estimates for $\mathcal{E}_{1}$ and $\mathcal{E}_{2}$ yields the desired result.
\end{proof}

An immediate consequence of Theorem \ref{thm:ConvErr_L2} is that $\calC_{\Psi_\rho}$ acts as an approximate projection operator. 

\begin{corollary}\label{cor:approx_proj}
	Under the assumptions of Theorem \ref{thm:ConvErr_L2}, we have
	\begin{equation*}
		\|\calC_{\Psi_\rho}\bff - \calC_{\Psi_\rho}^2\bff\|_{\bH^{\tau}(\bbS^2)} \leq C \rho^{\sigma-\tau} \|\bff\|_{\bH^{\sigma}(\bbS^2)}.
	\end{equation*}
\end{corollary}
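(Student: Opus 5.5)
The plan is to write the difference as $\calC_{\Psi_\rho}(\bff - \calC_{\Psi_\rho}\bff)$ and then combine a uniform boundedness property of $\calC_{\Psi_\rho}$ on $\bH^{\tau}(\bbS^2)$ with Theorem \ref{thm:ConvErr_L2}. Since $\calC_{\Psi_\rho}$ is linear, we have $\calC_{\Psi_\rho}\bff - \calC_{\Psi_\rho}^2\bff = \calC_{\Psi_\rho}(\bff-\calC_{\Psi_\rho}\bff)$. It therefore suffices to establish two things: that $\|\calC_{\Psi_\rho}\bgg\|_{\bH^{\tau}(\bbS^2)}\le C\|\bgg\|_{\bH^{\tau}(\bbS^2)}$ with $C$ independent of $\rho$, and that the Theorem applies to $\bgg = \bff-\calC_{\Psi_\rho}\bff$.

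For the boundedness, I will use Proposition \ref{multiplier}. As computed in its proof, $\calC_{\Psi_\rho}$ multiplies both the divergence-free coefficients $\hatdiv{\bgg}$ and the curl-free coefficients $\hatcurl{\bgg}$ by $\hatphi(\ell)$. Parseval in the basis $\{\by_{\ell,k},\bz_{\ell,k}\}$ then gives
\[
\|\calC_{\Psi_\rho}\bgg\|^2_{\bH^{\tau}(\bbS^2)}=\sumlk(1+\lambda_\ell)^{\tau}|\hatphi(\ell)|^2\left(|\hatdiv{\bgg}|^2+|\hatcurl{\bgg}|^2\right).
\]
I then need $\sup_\ell|\hatphi(\ell)|\le C$ uniformly in $\rho$. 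For $\ell\le\ell_\rho$, \eqref{Assumption-1} gives $|\hatphi(\ell)|\le 1+c_1(\ell\rho)^m\le 1+c_1$, because $\ell\rho\le 1$. For $\ell>\ell_\rho$, \eqref{Assumption-2} gives the upper bound $c_2$. A lower bound is also needed, since $\hatphi(\ell)$ could be negative. For this I would use the trivial estimate $|\hatphi(\ell)|\le \frac{2\pi}{2\ell+1}\int_{-1}^1|\psi_\rho|\,\mathrm{d}t$, which is bounded by $1$ when $\psi_\rho\ge 0$ thanks to the normalization. Alternatively, I would read \eqref{Assumption-2} as a bound on $|\hatphi(\ell)|$, as the proof of Theorem \ref{thm:ConvErr_L2} already does. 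This step is the only delicate point, and I expect it to be mild.

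With the boundedness in hand, Theorem \ref{thm:ConvErr_L2} gives
\[
\|\calC_{\Psi_\rho}\bff-\calC_{\Psi_\rho}^2\bff\|_{\bH^{\tau}}\le C\|\bff-\calC_{\Psi_\rho}\bff\|_{\bH^{\tau}}\le C\rho^{\sigma-\tau}\|\bff\|_{\bH^{\sigma}},
\]
which completes the argument.

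An equivalent and more direct route is to bound the multiplier $|\hatphi(\ell)(1-\hatphi(\ell))|\le C|1-\hatphi(\ell)|$ inside the same frequency split $\ell\le\ell_\rho$ versus $\ell>\ell_\rho$ used in the proof of Theorem \ref{thm:ConvErr_L2}. I would mention this only as a remark.
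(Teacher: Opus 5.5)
Your proposal is correct and is essentially the paper's argument. Both rest on the uniform bound $\sup_{\ell,\rho}|\hatphi(\ell)|\le C$ applied to the multiplier $\hatphi(\ell)\bigl(1-\hatphi(\ell)\bigr)$ of $\calC_{\Psi_\rho}-\calC_{\Psi_\rho}^2$; writing the difference as $\calC_{\Psi_\rho}(\bff-\calC_{\Psi_\rho}\bff)$ only lets you cite Theorem \ref{thm:ConvErr_L2} directly, where the paper says its proof ``proceeds identically.'' You also rightly note that \eqref{Assumption-2} is one-sided, so the bound on $|\hatphi(\ell)|$ needs its own justification, which the paper glosses over; nonnegativity of $\psi_\rho$ fails for the higher-order kernels of Section \ref{sec:numer_exam}, but a uniform bound on $2\pi\int_{-1}^1|\psi_\rho(t)|\,\mathrm{d}t$ gives the same estimate there.
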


\begin{proof}
	Making use of Proposition \ref{multiplier}, we identify the operator $\calC_{\Psi_\rho}$ with its simple multiplier $\hatphi(\ell)$. Thus,  the operator $\calC_{\Psi_\rho} - \calC_{\Psi_\rho}^2$ is identifiable with the simple multiplier $\hatphi(\ell) - \hatphi(\ell)^2$. Furthermore, we have
	\begin{equation*}
		|\hatphi(\ell) - \hatphi(\ell)^2| \leq C |1-\hatphi(\ell)|.
	\end{equation*}
	Here we have used the fact that the Fourier-Legendre coefficients $\hatphi(\ell)$ are uniformly bounded with respect to  $\ell$. The rest of the proof  proceeds identically to that of Theorem \ref{thm:ConvErr_L2}.
\end{proof}

It follows from \eqref{eq:ConvOperator1} and \eqref{eq:ConvOperator2}  that the operator $\calC_{\Psi_\rho}$ preserves the Helmholtz–Hodge decomposition. This leads us directly to  the same error estimate for the divergence/curl-free components of a vector field.


\begin{corollary}
	Under the assumptions of Theorem \ref{thm:ConvErr_L2}, the following error estimates hold:
	$$\|\bff_{\dive}-(\calC_{\Psi_\rho}\bff)_{\dive}\|_{\bH^{\tau}(\bbS^2)}\leq C\rho^{\sigma-\tau}\|\bff\|_{\bHsig},$$
	$$\|\bff_{\curl}-(\calC_{\Psi_\rho}\bff)_{\curl}\|_{\bH^{\tau}(\bbS^2)}\leq C\rho^{\sigma-\tau}\|\bff\|_{\bHsig}.$$
\end{corollary}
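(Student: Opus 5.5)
The plan is to reduce both estimates to Theorem \ref{thm:ConvErr_L2}, using the fact that $\calC_{\Psi_\rho}$ commutes with the orthogonal projections $\calP_\dive$ and $\calP_\curl$.

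First I will identify the two components explicitly. By Theorem \ref{thm:Four_expan_divcurl}, $\Psi_\rho^{\dive}$ expands purely in terms of $\by_{\ell,k}(x)\by_{\ell,k}(y)^\top$, and $\Psi_\rho^{\curl}$ purely in terms of $\bz_{\ell,k}(x)\bz_{\ell,k}(y)^\top$. The computation in the proof of Proposition \ref{multiplier}, together with orthonormality of $\{\by_{\ell,k}\}\cup\{\bz_{\ell,k}\}$ and the uniform convergence, then gives
\[
(\calC_{\Psi_\rho}\bff)_{\dive}=\sumlk \hatphi(\ell)\,\hatdiv{\bff}\,\by_{\ell,k},\qquad (\calC_{\Psi_\rho}\bff)_{\curl}=\sumlk \hatphi(\ell)\,\hatcurl{\bff}\,\bz_{\ell,k}.
\]
Consequently $(\calC_{\Psi_\rho}\bff)_{\dive}=\calC_{\Psi_\rho}(\calP_\dive\bff)=\calP_\dive(\calC_{\Psi_\rho}\bff)$, and similarly for the curl-free part. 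On the other side, $\bff_{\dive}=\sumlk\hatdiv{\bff}\,\by_{\ell,k}$.

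Next I will subtract and apply Parseval in the $\bH^\tau$ norm:
\[
\|\bff_{\dive}-(\calC_{\Psi_\rho}\bff)_{\dive}\|_{\bHtau}^2=\sumlk(1+\lambda_\ell)^\tau|1-\hatphi(\ell)|^2|\hatdiv{\bff}|^2 .
\]
This is exactly the divergence-free half of the sum \eqref{eq:error_sum}, so it is bounded by the full error $\|\bff-\calC_{\Psi_\rho}\bff\|_{\bHtau}^2$. Theorem \ref{thm:ConvErr_L2} bounds that by $C^2\rho^{2(\sigma-\tau)}\|\bff\|_{\bHsig}^2$. An equivalent route is to apply Theorem \ref{thm:ConvErr_L2} to $\bff_{\dive}$ directly and use $\|\bff_{\dive}\|_{\bHsig}\le\|\bff\|_{\bHsig}$. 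The curl-free estimate follows identically with $\hatcurl{\bff}$ and $\bz_{\ell,k}$ in place of $\hatdiv{\bff}$ and $\by_{\ell,k}$.

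The only step that needs care is the first one: justifying the termwise integration that shows the operator defined by \eqref{eq:ConvOperator2} maps $\bff$ to the stated projected series. I would handle this with the absolute and uniform convergence from Theorem \ref{thm:Four_expan_divcurl}, which permits exchanging sum and integral for $\bff\in\bld{L}_2$. It also shows that cross terms such as $\langle\by_{\ell,k},\bz_{\ell',k'}\rangle$ vanish, since these are orthogonal families.
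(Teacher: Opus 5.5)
Your proposal is correct and is essentially the paper's argument. The paper deduces the corollary directly from the fact that $\calC_{\Psi_\rho}$ preserves the Helmholtz--Hodge decomposition (via \eqref{eq:ConvOperator1}--\eqref{eq:ConvOperator2}), so each component error is the corresponding orthogonal half of the sum \eqref{eq:error_sum} and is dominated by the full error bounded in Theorem \ref{thm:ConvErr_L2}; your explicit Parseval computation, or alternatively applying the theorem to $\bff_{\dive}$ with $\|\bff_{\dive}\|_{\bHsig}\le\|\bff\|_{\bHsig}$, just makes that deduction precise.
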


The rest of this subsection devotes to the derivation of  an $\bld{L}_\infty$-error estimate for the convolution operator. 

\begin{theorem}\label{thm:Linf_error}
	Let $\bff \in \bH^{\sigma}(\bbS^2)$ with $\sigma > 1$. Suppose the kernel $\Psi_\rho$ satisfies Assumption \ref{assump:kernel} with $m = \sigma$. Then, there exists a constant $C > 0$ independent of $\rho$ and $\bff$,     
	such that
	\begin{equation}\label{eq:Linf_bound}
		\|\bff - \mathcal{C}_{\Psi_\rho}\bff\|_{\bld{L}_\infty(\bbS^2)} \leq C \rho^{\sigma-1} \|\bff\|_{\bH^{\sigma}(\bbS^2)}.
	\end{equation}
\end{theorem}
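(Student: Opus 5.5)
The plan is to write the error as a vector spherical harmonic series and bound it pointwise. Proposition \ref{multiplier} (together with the expansions in Theorem \ref{thm:Four_expan_divcurl}) shows that $\calC_{\Psi_\rho}$ multiplies each $\by_{\ell,k}$ and $\bz_{\ell,k}$ by $\hatphi(\ell)$. Hence
\[
(\bff-\calC_{\Psi_\rho}\bff)(x)=\sumone\bigl(1-\hatphi(\ell)\bigr)\sum_{k=1}^{2\ell+1}\Bigl(\hatdiv{\bff}\,\by_{\ell,k}(x)+\hatcurl{\bff}\,\bz_{\ell,k}(x)\Bigr).
\]
We show below that this series converges absolutely and uniformly for $\sigma>1$, so the identity holds pointwise for the continuous representative. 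For fixed $x$ we apply Cauchy--Schwarz in $k$ to each degree $\ell$ block, which gives
\[
\Bigl\|\sum_{k}\hatdiv{\bff}\by_{\ell,k}(x)\Bigr\|_2\le\Bigl(\sum_k|\hatdiv{\bff}|^2\Bigr)^{1/2}\Bigl(\sum_k\|\by_{\ell,k}(x)\|_2^2\Bigr)^{1/2}.
\]
The last factor is $\sqrt{\operatorname{tr}\mathcal{S}_\ell(x,x)}$. By Lemma \ref{lem:vec_addition} at $t=1$ (where $\mathbf{Q}=0$ and $\mathbf{R}=\mathbf{I}-xx^\top$), and using $P_\ell'(1)=\ell(\ell+1)/2$, we get $\operatorname{tr}\mathcal{S}_\ell(x,x)=\frac{2\ell+1}{4\pi}$. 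The same value holds for $\mathcal{T}_\ell(x,x)$, since $\mathbf{W}=\mathbf{I}-xx^\top$ at $y=x$.

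The key step is then a second Cauchy--Schwarz, this time over $\ell$, after inserting the weight $(1+\lambda_\ell)^{\sigma/2}$:
\[
\|\bff-\calC_{\Psi_\rho}\bff\|_{\bld L_\infty}\le C\Bigl(\sumone\frac{(2\ell+1)|1-\hatphi(\ell)|^2}{(1+\lambda_\ell)^{\sigma}}\Bigr)^{1/2}\|\bff\|_{\bHsig}.
\]
It remains to show that the scalar sum is $O(\rho^{2\sigma-2})$, and for this we split at $\ell_\rho$ as in the proof of Theorem \ref{thm:ConvErr_L2}.

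For $\ell\le\ell_\rho$, Assumption \ref{assump:kernel} with $m=\sigma$ gives $|1-\hatphi(\ell)|^2\le c_1^2(\rho\ell)^{2\sigma}$. The corresponding terms are then bounded by $C\rho^{2\sigma}\sum_{\ell\le\ell_\rho}\ell^{1+2\sigma-2\sigma}\le C\rho^{2\sigma}\ell_\rho^2\le C\rho^{2\sigma-2}$. For $\ell>\ell_\rho$, the bound $|1-\hatphi(\ell)|\le 1+c_2$ gives $C\sum_{\ell>\ell_\rho}\ell^{1-2\sigma}\le C\ell_\rho^{2-2\sigma}\le C\rho^{2\sigma-2}$. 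This tail is finite precisely because $\sigma>1$, and the same fact gives the absolute and uniform convergence of the series used above. Taking square roots yields $C\rho^{\sigma-1}$.

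The main obstacle is only the pointwise evaluation of the vector harmonics' squared norms, i.e.\ obtaining $\operatorname{tr}\mathcal{S}_\ell(x,x)=\operatorname{tr}\mathcal{T}_\ell(x,x)=(2\ell+1)/(4\pi)$ from Lemma \ref{lem:vec_addition}. This is straightforward but must be handled at $t=1$, where $P_\ell''$ multiplies a vanishing $\mathbf Q$, $\mathbf V$. Alternatively, one can use the scalar addition formula directly, via $\sum_k\|\nabla_*Y_{\ell,k}(x)\|^2=\frac{1}{2}\Delta_*\sum_kY_{\ell,k}^2+\ell(\ell+1)\sum_k Y_{\ell,k}^2=\ell(\ell+1)\frac{2\ell+1}{4\pi}$, and similarly for $\mathbf{L}_*$. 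The edge case $\ell_\rho=0$ for $\rho$ near $1$ is absorbed into the constant.
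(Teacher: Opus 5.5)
Your proposal is correct and follows the paper's proof essentially step for step. Both expand the error in vector spherical harmonics with multiplier $1-\widehat{\psi}_\rho(\ell)$, apply Cauchy--Schwarz over $(\ell,k)$ with the weight $(1+\lambda_\ell)^{\sigma}$, use $\sum_k\bigl(\|\mathbf{y}_{\ell,k}(x)\|_2^2+\|\mathbf{z}_{\ell,k}(x)\|_2^2\bigr)=(2\ell+1)/(2\pi)$, and split the resulting scalar sum at $\ell_\rho$. The differences are only cosmetic: you do the Cauchy--Schwarz in two stages, and you derive the pointwise addition identity from Lemma~\ref{lem:vec_addition} at $t=1$ (the traces of $\mathcal{S}_\ell(x,x)$ and $\mathcal{T}_\ell(x,x)$) rather than citing it, which makes your argument slightly more self-contained.
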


\begin{proof}
	Since $\sigma > 1$, the Sobolev embedding $\bH^{\sigma}(\bbS^2) \hookrightarrow \bC^0(\bbS^2)$ holds, ensuring the pointwise error is well-defined. Let $\mathcal{E}_\rho \bff(x) := \bff(x) - \mathcal{C}_{\Psi_\rho}\bff(x)$. Using the vector spherical harmonic expansion, we have
	\begin{equation*}
		\mathcal{E}_\rho \bff(x) = \sum_{\ell=1}^\infty (1 - \hatphi(\ell)) \sum_{k=1}^{2\ell+1} \left[ \widehat{\bff}_{\ell,k}^{\text{div}}\  \by_{\ell,k}(x) + \widehat{\bff}_{\ell,k}^{\text{curl}}\ \bz_{\ell,k}(x) \right],
	\end{equation*}
	where $\widehat{\bff}_{\ell,k}^{\text{div}}$ and $\widehat{\bff}_{\ell,k}^{\text{curl}}$ denote the divergence-free and curl-free Fourier coefficients, respectively. Applying the Cauchy-Schwarz inequality over the summation indices $\ell$ and $k$ yields
	\begin{equation}\label{eq:CS_split}
		|\mathcal{E}_\rho \bff(x)|^2 \leq \left( \sum_{\ell=1}^\infty \mathcal{K}_\ell(x) \right) \|\bff\|_{\bH^\sigma(\bbS^2)}^2,
	\end{equation}
	where the kernel function $\mathcal{K}_\ell(x)$ is given by
	\begin{equation*}
		\mathcal{K}_\ell(x) := (1+\lambda_\ell)^{-\sigma} |1 - \hatphi(\ell)|^2 \sum_{k=1}^{2\ell+1} \left( |\by_{\ell,k}(x)|^2 + |\bz_{\ell,k}(x)|^2 \right).
	\end{equation*}
	The addition theorem for vector spherical harmonics (see, e.g., \cite{Freeden_1993MMAS_vector,Freeden_2008book_spherical,Mueller_1966book_spherical})  states that
	\begin{equation}\label{eq:addition_thm}
		\sum_{k=1}^{2\ell+1} \left( |\by_{\ell,k}(x)|^2 + |\bz_{\ell,k}(x)|^2 \right) = \frac{2\ell+1}{2\pi}.
	\end{equation}
	Thus, the proof of the theorem is reduced  to showing that	\begin{equation}\label{eq:series_bound}
		S_\rho := \sum_{\ell=1}^\infty \ell (1+\ell^2)^{-\sigma} |1 - \hatphi(\ell)|^2 \le C \rho^{2(\sigma-1)}.
	\end{equation}
	To this goal, we write $S_\rho=S_{\rho, 1}+S_{\rho, 2,}$, in which
	\[
	S_{\rho, 1}:= \sum_{\ell=1}^{\ell_\rho} \ell (1+\ell^2)^{-\sigma} |1 - \hatphi(\ell)|^2, \quad S_{\rho, 2}:= \sum_{\ell=\ell_\rho+1}^\infty \ell (1+\ell^2)^{-\sigma}|1 - \hatphi(\ell)|^2. 
	\]
	By \eqref{Assumption-1}, we have $|1 - \hatphi(\ell)| \leq C (\rho \ell)^\sigma$ for $\ell \le \ell_\rho$. It then follows that
	\begin{align*}
		S_{\rho, 1} 
		\leq C \rho^{2\sigma} \sum_{\ell=1}^{\ell_\rho} \ell^{1-2\sigma} \ell^{2\sigma} \leq C \rho^{2\sigma} \ell_\rho^2 \leq C \rho^{2(\sigma-1)}.
	\end{align*}
	
	For $\ell > \ell_\rho$, $|1 - \hatphi(\ell)|$ is uniformly bounded with respect to $\ell$ by \eqref{Assumption-2}. Thus we have
	\begin{align*}
		S_{\rho, 2} & \le C\sum_{\ell=\ell_\rho+1}^\infty \ell  (1+\ell^2)^{-\sigma} 
		\leq C \sum_{\ell=\ell_\rho+1}^\infty \ell^{1-2\sigma}\leq C \ell_\rho^{2-2\sigma} \leq C \rho^{2\sigma-2}.
	\end{align*}
	This completes the proof.
\end{proof}

\subsection{Quadrature error}
We prove two lemmas prior to presenting our main result (Theorem \ref{thm:QMC_error}).

\begin{lemma}\label{lem:ExpressionQMCQI}
	Let $X_N$ be a QMC design. Let $\calL_{\Psi_\rho}$ be the vector field quasi-interpolant based on $X_N$. Then $\calL_{\Psi_\rho}$ admits the following 
	Fourier-Legendre expansion: 
	\begin{equation*}
		\calL_{\Psi_\rho}\bff(x)
		=\sum_{\ell=1}^\infty \hatphi(\ell) \sum_{k=1}^{2\ell+1} \left[\calI_{X_N}(\by^{\top}_{\ell,k}\bff)\by_{\ell,k}(x)+\calI_{X_N}(\bz^{\top}_{\ell,k}\bff)\bz_{\ell,k}(x)\right].
	\end{equation*}
\end{lemma}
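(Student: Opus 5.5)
The plan is to insert the Fourier–Legendre expansion of the matrix kernel from Theorem \ref{thm:Four_expan_divcurl} into the definition \eqref{eq:QMCQI} of the quasi-interpolant. We then interchange the finite sum over the nodes $x_j$ with the infinite sum over $\ell$, and recognize the resulting inner sums as values of the quadrature functional $\calI_{X_N}$.

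First, we apply Theorem \ref{thm:Four_expan_divcurl} to the scaled kernel $\psi_\rho$. Under the assumption $\sum_\ell \ell|\hatphi(\ell)|<\infty$, which holds for the kernels under consideration, it gives, for every $x$ and every node $x_j$,
\[
\Psi_\rho(x,x_j)=\sum_{\ell=1}^\infty \hatphi(\ell)\sum_{k=1}^{2\ell+1}\Big[\by_{\ell,k}(x)\by_{\ell,k}(x_j)^\top+\bz_{\ell,k}(x)\bz_{\ell,k}(x_j)^\top\Big].
\]
This series converges absolutely and uniformly on $\bbS^2\times\bbS^2$. Multiplying on the right by the fixed vector $\bff(x_j)$ preserves pointwise convergence. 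We then get
\[
\Psi_\rho(x,x_j)\bff(x_j)=\sum_{\ell=1}^\infty \hatphi(\ell)\sum_{k}\Big[\big(\by_{\ell,k}^\top\bff\big)(x_j)\,\by_{\ell,k}(x)+\big(\bz_{\ell,k}^\top\bff\big)(x_j)\,\bz_{\ell,k}(x)\Big].
\]
Here we have used that $\by_{\ell,k}(x_j)^\top\bff(x_j)$ is a scalar, so it commutes with the vector $\by_{\ell,k}(x)$.

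Second, we multiply by $4\pi/N$ and sum over $j=1,\dots,N$. Since the sum over $j$ is finite and each series in $\ell$ converges, we may interchange the two summations. The coefficient of $\by_{\ell,k}(x)$ becomes $\frac{4\pi}{N}\sum_j (\by_{\ell,k}^\top\bff)(x_j)=\calI_{X_N}(\by^\top_{\ell,k}\bff)$, and the same holds with $\bz_{\ell,k}$ in place of $\by_{\ell,k}$. This yields the claimed expansion. Note that the QMC property of $X_N$ is not needed here; the identity holds for any point set.

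The main point requiring care is justifying the use of Theorem \ref{thm:Four_expan_divcurl} at the pairs $(x,x_j)$ where $x=\pm x_j$. In those cases the kernel is defined by continuous extension, and Lemma \ref{lem:Sxy} alone only covers $\mathbb{Y}$. Since Theorem \ref{thm:Four_expan_divcurl} asserts uniform convergence on all of $\bbS^2\times\bbS^2$, the identity extends to these pairs by continuity, so pointwise evaluation at every $x$ is legitimate. We also interpret $\bff(x_j)$ as the point values of a continuous representative of $\bff$, which exists for $\bff\in\bH^\sigma$ with $\sigma>1$. Beyond these two points the argument is a finite rearrangement.
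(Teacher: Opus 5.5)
Your proposal is correct and matches the paper's proof: substitute the expansion from Theorem \ref{thm:Four_expan_divcurl} into \eqref{eq:QMCQI}, interchange the finite node sum with the $\ell$-series, and read off $\calI_{X_N}(\by_{\ell,k}^\top\bff)$ and $\calI_{X_N}(\bz_{\ell,k}^\top\bff)$. Your additional remarks are sound refinements that the paper leaves implicit: the points $x=\pm x_j$ are covered by uniform convergence on all of $\bbS^2\times\bbS^2$, $\bff(x_j)$ is read off a continuous representative of $\bff$, and the QMC property plays no role.
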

\begin{proof}
	We write by Theorem~\ref{thm:Four_expan_divcurl} and Equation \eqref{eq:QMCQI} that 
	\begin{align*}
		&\calL_{\Psi_\rho}\bff(x) = \frac{4\pi}{N}\sum_{j=1}^{N} \left\{ \sum_{\ell=1}^\infty \hatphi(\ell)\sum_{k=1}^{2\ell+1}\Big[\by_{\ell,k}(x) \by_{\ell,k}(x_j)^{\top}\bff(x_j)+\bz_{\ell,k}(x) \bz_{\ell,k}(x_j)^{\top}\bff(x_j)\Big] \right\} \\
		= &  \sum_{\ell=1}^\infty \hatphi(\ell)\left\{\sum_{k=1}^{2\ell+1}\left[\by_{\ell,k}(x) \left(\frac{4\pi}{N}\sum_{j=1}^{N}\by_{\ell,k}(x_j)^{\top}\bff(x_j)\right)+\bz_{\ell,k}(x) \left(\frac{4\pi}{N}\sum_{j=1}^{N}\bz_{\ell,k}(x_j)^{\top}\bff(x_j)\right)\right] \right\} \\
		=&\sum_{\ell=1}^\infty \hatphi(\ell) \sum_{k=1}^{2\ell+1} \left[\calI_{X_N}(\by^{\top}_{\ell,k}\bff)\by_{\ell,k}(x)+\calI_{X_N}(\bz^{\top}_{\ell,k}\bff)\bz_{\ell,k}(x)\right],	\end{align*}
	which is the desired result of the lemma.
\end{proof}

\begin{lemma}\label{lem:QMC_quaderr}
	Let $\sigma > 1$ and $X_N$ a QMC design 
	for $H^\sigma(\bbS^2)$. Then there exists a constant $C > 0$ independent of $N$ such that
	\begin{equation*}
		\Big|\langle \bff, \bgg  \rangle_{\bld{L}_2} - \calI_{X_N}( \bff^{\top}\bgg)\Big| \leq C N^{-\frac{\sigma}{2}} \|\bff\|_{\bHsig} \|\bgg\|_{\bHsig}, \quad \forall\ \bff, \bgg \in \bHsig.
	\end{equation*} 
	
\end{lemma}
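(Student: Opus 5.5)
The plan is to reduce the vector statement to the scalar QMC estimate \eqref{eq:QMC_estimate}, applied to the scalar function $h:=\bff^\top\bgg$ on $\bbS^2$. Since $\langle \bff,\bgg\rangle_{\bld{L}_2}=\int_{\bbS^2} h\,\dmu$ and $\calI_{X_N}(\bff^\top\bgg)=\calI_{X_N}(h)$, the definition of a QMC design gives
\[
\Big|\langle \bff,\bgg\rangle_{\bld{L}_2}-\calI_{X_N}(\bff^\top\bgg)\Big|\le c_\sigma N^{-\sigma/2}\|h\|_{H^\sigma(\bbS^2)} .
\]
The whole task is therefore to prove the product estimate $\|\bff^\top\bgg\|_{H^\sigma(\bbS^2)}\le C\|\bff\|_{\bHsig}\|\bgg\|_{\bHsig}$ for $\sigma>1$.

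For this product estimate I would proceed in three steps.

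\emph{Step 1.} Identify the vectorial Sobolev norm with componentwise scalar Sobolev norms. Write $\bff=(f_1,f_2,f_3)^\top$, where each $f_i$ is the restriction to $\bbS^2$ of a Cartesian component. I would show $\sum_i\|f_i\|_{H^\sigma(\bbS^2)}\asymp\|\bff\|_{\bHsig}$, at least with the upper bound $\|f_i\|_{H^\sigma}\le C\|\bff\|_{\bHsig}$, which is all that is needed. The route uses the expansion $\bff=\sum(\widehat{\bff}^{\dive}_{\ell,k}\by_{\ell,k}+\widehat{\bff}^{\curl}_{\ell,k}\bz_{\ell,k})$. The Cartesian components of $\by_{\ell,k}$ and $\bz_{\ell,k}$ are built from first-order tangential derivatives of $Y_{\ell,k}$, normalized by $\sqrt{\ell(\ell+1)}$. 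Each such component lies in $\mathcal H_{\ell-1}\oplus\mathcal H_{\ell+1}$, because $\nabla_*Y_{\ell,k}=\nabla(|x|^{-\ell}\ldots)$-type identities (Freeden's operators $\mathbf o^{(1)},\mathbf o^{(2)}$) shift the degree by one. Consequently the $(1+\lambda)^\sigma$ weights change only by bounded factors. An equivalent route is to invoke the standard fact that $\bHsig$ agrees with the space of tangent fields having $H^\sigma$ components, as in \cite{Fuselier_2009MCoM_error,Freeden_2008book_spherical}.

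\emph{Step 2.} Use the algebra property. For $\sigma>1=\dim\bbS^2/2$, $H^\sigma(\bbS^2)$ is a Banach algebra: $\|uv\|_{H^\sigma}\le C\|u\|_{H^\sigma}\|v\|_{H^\sigma}$. I would justify this via a finite atlas and a partition of unity, reducing to the classical result on $\R^2$; alternatively, for integer $\sigma$, directly via Leibniz rules for $\nabla_*$ together with the embedding $H^\sigma\hookrightarrow C^0$.

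\emph{Step 3.} Combine. Write $h=\sum_{i=1}^3 f_ig_i$. Then
\[
\|h\|_{H^\sigma}\le C\sum_i\|f_i\|_{H^\sigma}\|g_i\|_{H^\sigma}\le C'\|\bff\|_{\bHsig}\|\bgg\|_{\bHsig},
\]
and inserting this into the QMC bound finishes the proof.

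The main obstacle is Step 1. The norm $\bHsig$ is defined spectrally through $\by_{\ell,k},\bz_{\ell,k}$, so relating it to Cartesian component norms requires a careful check that the components of the vector harmonics are bounded-degree-shift combinations of scalar harmonics with uniformly controlled coefficients. I would first attempt to cite the norm equivalence from the vector spherical harmonic literature. Failing that, I would verify it by hand via the identities $x\times\nabla$ and $(\mathbf I-xx^\top)\nabla$ applied to solid harmonics.
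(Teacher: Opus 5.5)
Your proposal is correct and follows the paper's proof: apply the scalar QMC bound \eqref{eq:QMC_estimate} to $h=\bff^\top\bgg$ and use the Banach-algebra property of $H^\sigma(\bbS^2)$ for $\sigma>1$; your Step 1 simply spells out the comparison between componentwise norms and $\bHsig$ that the paper uses without comment in \eqref{eq:algebra_prop}. Two small points on Step 1: the Cartesian components of $\by_{\ell,k}$ actually lie in $\mathcal{H}_\ell$, since the components of $x\times\nabla$ commute with the Laplacian and preserve degree, so the degree shift is still at most one and your argument stands; and the coefficient control you flag as the main obstacle is automatic, because the squared $L_2$ norms of the three components of each degree-$\ell$ block add up to that block's squared vector $L_2$ norm.
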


\begin{proof}
	Note first that the condition  $\sigma > 1$ implies that
	the Sobolev space $H^\sigma(\bbS^2)$ is a Banach algebra under pointwise addition and multiplication of functions. We then have
	\begin{equation}\label{eq:algebra_prop}
		\|\bff^\top \bgg\|_{H^\sigma(\bbS^2)} \leq  C\ \|\bff\|_{\bHsig} \|\bgg\|_{\bHsig}.
	\end{equation}
	It follows from \eqref{eq:QMC_estimate} that
	\begin{align*}
		\Big|\int_{\bbS^2} \bff^\top \bgg \dmu(x)-\calI_{X_N}(\bff^\top \bgg)\Big|
		\leq c_{\sigma} N^{-\frac{\sigma}{2}} \|\bff^\top \bgg\|_{H^{\sigma}(\bbS^2)}
		\leq C\ N^{-\frac{\sigma}{2}} \|\bff\|_{\bHsig} \|\bgg\|_{\bHsig},
	\end{align*}
	which completes the proof.
\end{proof}

\begin{theorem}\label{thm:QMC_error}
	Let $\sigma > 1$ and $X_N$ a QMC design 
	for $H^\sigma(\bbS^2)$. Suppose that a scaled zonal kernel $\psi_\rho$ satisfies Assumption \ref{assump:kernel} with scale parameter $0<\rho<1$ and order $m=\sigma$. Let  $0 \leq \tau \leq \sigma$ and $s = \tau + \sigma$. Then there exists a constant $C>0$ independent of $\rho$,  $N$, and $ \bff \in \bHsig,$ such that
	\begin{equation*}
		\|\bff-\calL_{\Psi_\rho} \bff\|_{\bHtau} \leq C \left(\rho^{\sigma-\tau} + \rho^{-s} N^{-\frac{\sigma}{2}}\right) \|\bff\|_{\bHsig}.
	\end{equation*}
\end{theorem}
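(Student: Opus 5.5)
We split the error with the triangle inequality:
\[
\|\bff-\calL_{\Psi_\rho}\bff\|_{\bHtau}\le \|\bff-\calC_{\Psi_\rho}\bff\|_{\bHtau}+\|\calC_{\Psi_\rho}\bff-\calL_{\Psi_\rho}\bff\|_{\bHtau}.
\]
The first term is bounded by $C\rho^{\sigma-\tau}\|\bff\|_{\bHsig}$ directly from Theorem \ref{thm:ConvErr_L2}, since $m=\sigma$.

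The rest of the proof concerns the quadrature term. By Proposition \ref{multiplier} and Lemma \ref{lem:ExpressionQMCQI}, the difference $\calC_{\Psi_\rho}\bff-\calL_{\Psi_\rho}\bff$ has divergence-free coefficients $\hatphi(\ell)\,e^{\dive}_{\ell,k}$ and curl-free coefficients $\hatphi(\ell)\,e^{\curl}_{\ell,k}$, where
\[
e^{\dive}_{\ell,k}:=\langle \bff,\by_{\ell,k}\rangle-\calI_{X_N}(\by_{\ell,k}^\top\bff),\qquad e^{\curl}_{\ell,k}:=\langle \bff,\bz_{\ell,k}\rangle-\calI_{X_N}(\bz_{\ell,k}^\top\bff).
\]
Rather than estimating the coefficients one by one, we use duality. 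Set $\bgg:=\calC_{\Psi_\rho}\bff-\calL_{\Psi_\rho}\bff$ and $\bld{h}:=\sum_{\ell,k}(1+\lambda_\ell)^{\tau}\hatphi(\ell)\big(\widehat{\bgg}^{\dive}_{\ell,k}\by_{\ell,k}+\widehat{\bgg}^{\curl}_{\ell,k}\bz_{\ell,k}\big)$. By linearity of $\langle\cdot,\cdot\rangle$ and $\calI_{X_N}$,
\[
\|\bgg\|_{\bHtau}^2=\langle \bff,\bld{h}\rangle-\calI_{X_N}(\bff^\top\bld{h}).
\]
This interchange of the infinite sum with $\calI_{X_N}$ is justified by the uniform convergence in Theorem \ref{thm:Four_expan_divcurl}.

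Lemma \ref{lem:QMC_quaderr} then gives $\|\bgg\|_{\bHtau}^2\le CN^{-\sigma/2}\|\bff\|_{\bHsig}\|\bld{h}\|_{\bHsig}$. It remains to bound $\|\bld{h}\|_{\bHsig}$. Its coefficients are those of $\bgg$ multiplied by $(1+\lambda_\ell)^{\tau}\hatphi(\ell)$, so
\[
\|\bld{h}\|_{\bHsig}\le \sup_{\ell}\,(1+\lambda_\ell)^{(\sigma+\tau)/2}|\hatphi(\ell)|\;\|\bgg\|_{\bHtau}.
\]
This supremum is the key multiplier bound and the main obstacle. We need $(1+\lambda_\ell)^{s/2}|\hatphi(\ell)|\le C\rho^{-s}$ with $s=\sigma+\tau$, i.e.\ $|\hatphi(\ell)|\lesssim (1+\rho\ell)^{-s}\rho^{-s}\ell^{-s}$ up to constants. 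Assumption \ref{assump:kernel} alone gives this only for $\ell\le\ell_\rho$, where $|\hatphi(\ell)|\le 1+c_1$ and $\ell^{s}\le\rho^{-s}$. For $\ell>\ell_\rho$ it gives only boundedness, which is not enough. Here we would invoke the decay of Assumption \ref{assump2} with exponent $2s\ge s$: then $\hatphi(\ell)\le c_4(\rho\ell)^{-2s}$, so $\ell^{s}\hatphi(\ell)\le c_4\rho^{-s}(\rho\ell)^{-s}\le c_4\rho^{-s}$ for $\rho\ell\ge 1$. Equivalently, Lemma \ref{lem:Equiv_Sobolev_Native} provides the $\rho^{-s}$ growth. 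If the authors intend only Assumption \ref{assump:kernel}, this step must absorb such a decay hypothesis, and we would state it explicitly.

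Cancelling one factor of $\|\bgg\|_{\bHtau}$ gives $\|\bgg\|_{\bHtau}\le C\rho^{-s}N^{-\sigma/2}\|\bff\|_{\bHsig}$. Adding the first term completes the estimate.
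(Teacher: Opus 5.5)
Your argument is correct, granting the decay hypothesis you flag. It departs from the paper's proof at exactly the step that matters.

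\textbf{What the paper does.} It makes the same triangle split, uses Theorem~\ref{thm:ConvErr_L2} for the first term, and reads off the coefficients of $\calC_{\Psi_\rho}\bff-\calL_{\Psi_\rho}\bff$ from Lemma~\ref{lem:ExpressionQMCQI}. It then applies Lemma~\ref{lem:QMC_quaderr} coefficient by coefficient, $|\calE^{\by}_{\ell,k}|,\ |\calE^{\bz}_{\ell,k}|\le CN^{-\sigma/2}(1+\lambda_\ell)^{\sigma/2}\|\bff\|_{\bHsig}$, and sums. Its cost is therefore $\sum_{\ell\ge1}(2\ell+1)(1+\lambda_\ell)^{s}\hatphi(\ell)^2$, which it identifies with $\|\psi_\rho\|^2_{H^s(\bbS^2)}$ and bounds through Lemma~\ref{lem:Equiv_Sobolev_Native}. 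So the paper also silently relies on Assumption~\ref{assump2}, as you anticipated.

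\textbf{What your route buys.} You apply Lemma~\ref{lem:QMC_quaderr} once, to the single test field $\bld h$. The cost becomes the multiplier sup $\sup_\ell(1+\lambda_\ell)^{s/2}|\hatphi(\ell)|\le C\rho^{-s}$ rather than a sum over all $(\ell,k)$, and this is not cosmetic. The paper's sum carries the multiplicity $2\ell+1$. Assumption~\ref{assump:kernel} gives $\hatphi(\ell)\ge 1/2$ for $\ell\le\epsilon\rho^{-1}$ with $\epsilon$ small, so the sum is $\gtrsim\rho^{-2s-2}$. Hence $\|\psi_\rho\|_{H^s(\bbS^2)}\sim\rho^{-s-1}$, not $\rho^{-s}$; compare $\|\psi_\rho\|_{L_2(\bbS^2)}\sim\rho^{-1}$, which the paper itself uses in the robustness proof. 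The coefficient-wise route thus honestly yields only $\rho^{-s-1}N^{-\sigma/2}$. Your duality argument is what actually delivers the stated $\rho^{-s}N^{-\sigma/2}$. It also needs only $\hatphi(\ell)\lesssim(\rho\ell)^{-s}$ beyond $\ell_\rho$, rather than summability of $\ell^{2s+1}\hatphi(\ell)^2$. What the paper's route buys is simplicity: no interchange of limits and no cancellation step.

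\textbf{Two small repairs.}
\begin{itemize}
\item Theorem~\ref{thm:Four_expan_divcurl} justifies Lemma~\ref{lem:ExpressionQMCQI}. It does not give pointwise convergence at the nodes of the series defining $\bld h$, whose coefficients are $(1+\lambda_\ell)^{\tau}\hatphi(\ell)^2$ times quadrature errors. Also, cancelling $\|\bgg\|_{\bHtau}$ presupposes that it is finite. Both issues disappear if you work with the truncation $\bld h_L$ to degrees $\ell\le L$. Then $\|\bgg_L\|^2_{\bHtau}=\langle\bff,\bld h_L\rangle-\calI_{X_N}(\bff^\top\bld h_L)$ is a finite identity, $\|\bld h_L\|_{\bHsig}\le C\rho^{-s}\|\bgg_L\|_{\bHtau}$, and the resulting bound is uniform in $L$.
\item No case split at $\rho\ell\ge 1$ is needed, and in fact $\ell>\ell_\rho$ does not guarantee $\rho\ell\ge1$. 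Directly, $(1+\lambda_\ell)^{s/2}(1+\rho\ell)^{-2s}\le C\ell^{s}(1+\rho\ell)^{-s}\le C\rho^{-s}$ for every $\ell\ge1$.
\end{itemize}
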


\begin{proof}
	We first write by the triangle inequality that
	\begin{equation}\label{eq:err_decomp}
		\|\bff-\calL_{\Psi_\rho} \bff\|_{\bHtau} \leq \|\bff-\calC_{\Psi_\rho}\bff\|_{\bHtau} + \|\calC_{\Psi_\rho}\bff-\calL_{\Psi_\rho} \bff\|_{\bHtau}.
	\end{equation}
	We bound the first term making use of  the  error estimate in Theorem \ref{thm:ConvErr_L2} as follows.
	\begin{equation}\label{eq:conv_bound}
		\|\bff-\calC_{\Psi_\rho}\bff\|_{\bHtau} \leq C \rho^{\sigma-\tau} \|\bff\|_{\bH^{\sigma}(\bbS^2)} \leq C \rho^{\sigma-\tau} \|\bff\|_{\bHsig}.
	\end{equation}
	To bound the second term,  Let $\be_N := \calC_{\Psi_\rho}\bff-\calL_{\Psi_\rho} \bff$. We employ Parseval's identity and the result of Lemma \ref{lem:ExpressionQMCQI} to derive
	\begin{equation*}
		\|\be_N\|_{\bHtau}^2 = \sum_{\ell=1}^{\infty} (1+\lambda_\ell)^{\tau} |\widehat{\psi}_\rho(\ell)|^2 \sum_{k=1}^{2\ell+1} \left( \left|\calE_{\ell,k}^{\by}\right|^2 + \left|\calE_{\ell,k}^{\bz}\right|^2 \right),
	\end{equation*}
	where $\calE_{\ell,k}^{\by} := \langle \bff, \by_{\ell,k} \rangle - \calI_{X_N}(\by_{\ell,k}^\top \bff)$, and  $\calE_{\ell,k}^{\bz}:= \langle \bff, \bz_{\ell,k} \rangle - \calI_{X_N}(\bz_{\ell,k}^\top \bff)$ . Applying Lemma \ref{lem:QMC_quaderr}, we get that
	\begin{equation*}
		\max \left\{|\calE_{\ell,k}^{\by}|, |\calE_{\ell,k}^{\bz}|\right\} \leq C N^{-\frac{\sigma}{2}} \|\by_{\ell,k}\|_{\bHsig} \|\bff\|_{\bHsig} = C N^{-\frac{\sigma}{2}} (1+\lambda_\ell)^{\frac{\sigma}{2}} \|\bff\|_{\bHsig}.
	\end{equation*}
	It then follows that
	\begin{align*}
		\|\be_N\|_{\bHtau}^2 
		&\leq C N^{-\sigma} \|\bff\|_{\bHsig}^2 \sum_{\ell=1}^{\infty}\sum_{k=1}^{2\ell+1} (1+\lambda_\ell)^{\tau+\sigma} |\widehat{\psi}_\rho(\ell)|^2   \\
		&= C N^{-\sigma} \|\bff\|_{\bHsig}^2 \|\psi_\rho\|_{H^s(\bbS^2)}^2 \le C\rho^{-s} N^{-\sigma} \|\bff\|_{\bHsig}^2.
	\end{align*}
	Here in the last inequality above, we have used the result of Lemma \ref{lem:Equiv_Sobolev_Native} ( $\|\psi_\rho\|_{H^s(\bbS^2)} \leq C \rho^{-s}$).  Combining this with \eqref{eq:err_decomp} and \eqref{eq:conv_bound} completes the proof.
\end{proof}

It is well-known that the divergence/curl-free components in the 
Helmholtz-Hodge decomposition of an $\bld{L}_2(\bbS^2)$-vector-field 
are mutually orthogonal
in $\bld{L}_2(\bbS^2)$. In the same vein, they are also orthogonal in  $\bHtau$ for each $\tau \ge 0.$
Furthermore, the orthogonality is preserved under the actions of the convolution operator $\calC_{\Psi_\rho}$  and the quasi-interpolation operator $\calL_{\Psi_\rho}$.  Consequently, we have the following ``error-decomposition":
\begin{equation*}
	\|\bff-\calL_{\Psi_\rho} \bff\|_{\bHtau}^2 = \|\bff_{\dive}-(\calL_{\Psi_\rho}\bff)_{\dive}\|_{\bHtau}^2 + \|\bff_{\curl}-(\calL_{\Psi_\rho}\bff)_{\curl}\|_{\bHtau}^2,
\end{equation*}
and the respective error estimates for the divergence/curl-free components in the 
Helmholtz-Hodge decomposition of an $\bld{L}_2(\bbS^2)$-vector-field.


\begin{corollary}\label{cor:HHD_error}
	Let $\bff \in \bld{L}_2(\bbS^2)$ and $\bff = \bff_{\dive} + \bff_{\curl}$ be the Helmholtz-Hodge decomposition of $\bff$. Under the assumptions of Theorem \ref{thm:QMC_error}, we have
	\begin{align*}
		\|\bff_{\dive}-(\calL_{\Psi_\rho}\bff)_{\dive}\|_{\bH^{\tau}(\bbS^2)} &\leq C\left(\rho^{\sigma-\tau}  +\rho^{-s}N^{-\frac{\sigma}{2}}\right)\|\bff\|_{\bHsig}, \\
		\|\bff_{\curl}-(\calL_{\Psi_\rho}\bff)_{\curl}\|_{\bH^{\tau}(\bbS^2)} &\leq C\left(\rho^{\sigma-\tau}  +\rho^{-s}N^{-\frac{\sigma}{2}}\right)\|\bff\|_{\bHsig}.
	\end{align*}
\end{corollary}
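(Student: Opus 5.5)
The plan is to reduce Corollary~\ref{cor:HHD_error} to Theorem~\ref{thm:QMC_error} through the orthogonal error decomposition stated just before it. First I will check that the quasi-interpolant's components in \eqref{eq:QMCQI_components} coincide with the orthogonal projections $\calP_\dive \calL_{\Psi_\rho}\bff$ and $\calP_\curl \calL_{\Psi_\rho}\bff$. By Lemma~\ref{lem:ExpressionQMCQI} and the expansions \eqref{eq:Four_divekernel}--\eqref{eq:Four_curlkernel}, the divergence-free part is
\[
(\calL_{\Psi_\rho}\bff)_{\dive}(x)=\sum_{\ell=1}^\infty \hatphi(\ell)\sum_{k=1}^{2\ell+1}\calI_{X_N}(\by_{\ell,k}^{\top}\bff)\,\by_{\ell,k}(x).
\]
This lies in the closed span of the $\by_{\ell,k}$, and the curl-free part likewise lies in the span of the $\bz_{\ell,k}$. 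So these really are the Helmholtz--Hodge components of $\calL_{\Psi_\rho}\bff$.

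Next I subtract from $\bff_{\dive}=\sum_{\ell,k}\hatdiv{\bff}\by_{\ell,k}$ and $\bff_{\curl}=\sum_{\ell,k}\hatcurl{\bff}\bz_{\ell,k}$. The differences $\bff_{\dive}-(\calL_{\Psi_\rho}\bff)_{\dive}$ and $\bff_{\curl}-(\calL_{\Psi_\rho}\bff)_{\curl}$ then have Fourier coefficients supported only on the $\by$-family and only on the $\bz$-family, respectively. Since the $\bHtau$ norm is a weighted sum of $|\widehat{\cdot}^{\dive}_{\ell,k}|^2+|\widehat{\cdot}^{\curl}_{\ell,k}|^2$ with the same weight $(1+\lambda_\ell)^\tau$, Parseval gives the Pythagorean identity
\[
\|\bff-\calL_{\Psi_\rho}\bff\|_{\bHtau}^2=\|\bff_{\dive}-(\calL_{\Psi_\rho}\bff)_{\dive}\|_{\bHtau}^2+\|\bff_{\curl}-(\calL_{\Psi_\rho}\bff)_{\curl}\|_{\bHtau}^2 .
\]
Each summand is therefore bounded by the left-hand side. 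Theorem~\ref{thm:QMC_error} bounds the left-hand side by $C(\rho^{\sigma-\tau}+\rho^{-s}N^{-\sigma/2})\|\bff\|_{\bHsig}$, which gives both estimates at once.

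The only delicate point is justifying the term-by-term manipulations: interchanging the finite quadrature sum with the infinite series, and establishing membership in $\bHtau$. I will handle this with the absolute and uniform convergence from Theorem~\ref{thm:Four_expan_divcurl}, together with the finiteness of $\sum_\ell (2\ell+1)(1+\lambda_\ell)^{\tau}|\hatphi(\ell)|^2\,\|\by_{\ell,k}\|_{\bHsig}^2$. That finiteness was already used implicitly in the proof of Theorem~\ref{thm:QMC_error}, so no new ingredient is needed.

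Although the corollary is stated for $\bff\in\bld{L}_2$, the hypotheses of Theorem~\ref{thm:QMC_error} require $\bff\in\bHsig$. I read the statement in that sense: if $\bff\notin\bHsig$, the right-hand side is infinite and there is nothing to prove.
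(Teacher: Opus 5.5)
Your proposal is correct and follows the paper's own argument. The paper likewise notes that $\calL_{\Psi_\rho}$ preserves the divergence-free/curl-free splitting, derives the same Pythagorean error decomposition in $\bHtau$, and reads off both component bounds from Theorem~\ref{thm:QMC_error}; your explicit check via Lemma~\ref{lem:ExpressionQMCQI} that the components in \eqref{eq:QMCQI_components} are the orthogonal projections of $\calL_{\Psi_\rho}\bff$ simply supplies a step the paper states without detail.
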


\subsection{Robustness of the quasi-interpolation algorithm}
Let $X_N:=\{x_1,\ldots,x_N\}$ be a QMC design for $H^\sigma(\bbS^2)$.
Suppose that the measurements of a vector field $\bff$ on $X_N$ are contaminated and result in the observations $\bff(x_j)+\beps_j$ ( $1\leq j\leq N$), where $\beps_j$ are noise vectors. The corresponding 
vector-field quasi-interpolant is then of the form:
\begin{equation}\label{eq:QMCQI_noise}
	\calL_{\Psi_\rho}^{\epsilon}\bff(x)=\frac{4\pi}{N}\sum_{j=1}^{N}\Psi_\rho(x,x_j)(\bff(x_j)+\beps_j) ,\quad x\in\bbS^d.
\end{equation}
For the sake of applications, we make the following assumption on the noise vectors $\beps_j$ ( $1\leq j\leq N$).

\begin{assumption}\label{noise-assump}
	(i) The random vectors   $\beps_1, \cdots, \beps_N$  are independent and $\bbE \beps_j=0$ for all $j=1,\ldots,N$: (ii) There exists a constant $B$ independent of $N$, such that ${\rm tr} (B_j) \le B, \; \forall\ 1\leq j\leq N,$ where $B_j$ is the covariance matrix for the three component-random-variables of $\beps_j$. 
\end{assumption}

The following theorem solidifies the robustness of our quasi-interpolation algorithm against random noises.
\begin{theorem}
	Suppose that in \eqref{eq:QMCQI_noise} the random noise vectors $\beps_j$ satisfy Assumption \ref{noise-assump}. Under the assumptions of Theorem \ref{thm:QMC_error} for the case $\tau=0$, we have that
	\begin{equation*}
		\bbE\Big[\big\|\calL_{\Psi_\rho}^{\epsilon}\bff-\bff\big\|_{\bld{L}_2(\bbS^2)}\Big] \leq C\left(\rho^\sigma+\rho^{-\sigma}N^{-\frac{\sigma}{2}}\right)\|\bff\|_{\bHsig} + C\rho^{-1}N^{-\frac{1}{2}}.
	\end{equation*}
\end{theorem}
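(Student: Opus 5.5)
We split the noisy quasi-interpolant into a deterministic part and a noise part. By linearity of \eqref{eq:QMCQI_noise},
\[
\calL_{\Psi_\rho}^{\epsilon}\bff-\bff=(\calL_{\Psi_\rho}\bff-\bff)+\calN_\epsilon,\qquad \calN_\epsilon(x):=\frac{4\pi}{N}\sum_{j=1}^N\Psi_\rho(x,x_j)\beps_j .
\]
The triangle inequality and taking expectations give
\[
\bbE\|\calL_{\Psi_\rho}^{\epsilon}\bff-\bff\|_{\bld{L}_2}\le \|\calL_{\Psi_\rho}\bff-\bff\|_{\bld{L}_2}+\bbE\|\calN_\epsilon\|_{\bld{L}_2}.
\]
The first term is deterministic. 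By Theorem \ref{thm:QMC_error} with $\tau=0$ (so $s=\sigma$), it is bounded by $C(\rho^\sigma+\rho^{-\sigma}N^{-\sigma/2})\|\bff\|_{\bHsig}$. It remains to show $\bbE\|\calN_\epsilon\|_{\bld{L}_2}\le C\rho^{-1}N^{-1/2}$.

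For the noise term we use Jensen's inequality: $\bbE\|\calN_\epsilon\|\le(\bbE\|\calN_\epsilon\|^2)^{1/2}$. We expand
\[
\|\calN_\epsilon\|_{\bld{L}_2}^2=\frac{16\pi^2}{N^2}\sum_{j,i}\int_{\bbS^2}\beps_j^\top\Psi_\rho(x,x_j)^\top\Psi_\rho(x,x_i)\beps_i\,\dmu(x).
\]
By independence and zero mean (Assumption \ref{noise-assump}(i)), the cross terms $j\ne i$ vanish in expectation. Writing $A_j:=\int_{\bbS^2}\Psi_\rho(x,x_j)^\top\Psi_\rho(x,x_j)\,\dmu(x)$, each diagonal term has expectation
\[
\bbE[\beps_j^\top A_j\beps_j]=\operatorname{tr}(A_jB_j)\le \|A_j\|_2\operatorname{tr}(B_j)\le B\|A_j\|_2 ,
\]
using that $B_j$ is positive semidefinite. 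Hence $\bbE\|\calN_\epsilon\|^2\le 16\pi^2 B N^{-1}\max_j\|A_j\|_2$.

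The key step is bounding $\|A_j\|_2$ uniformly in $y=x_j$. Theorem \ref{thm:Four_expan_divcurl} gives $\Psi_\rho(\cdot,y)=\sum_\ell\hatphi(\ell)\sum_k(\by_{\ell,k}\by_{\ell,k}(y)^\top+\bz_{\ell,k}\bz_{\ell,k}(y)^\top)$. Orthonormality of the vector spherical harmonics then yields
\[
A(y)=\sum_\ell\hatphi(\ell)^2\sum_k\big(\by_{\ell,k}(y)\by_{\ell,k}(y)^\top+\bz_{\ell,k}(y)\bz_{\ell,k}(y)^\top\big).
\]
This matrix is positive semidefinite, so its spectral norm is at most its trace. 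By the addition theorem \eqref{eq:addition_thm},
\[
\|A(y)\|_2\le\sum_\ell\hatphi(\ell)^2\,\frac{2\ell+1}{2\pi}.
\]
This is the main obstacle: we need $\sum_\ell(2\ell+1)\hatphi(\ell)^2\le C\rho^{-2}$.

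For $\ell\le\ell_\rho$, Assumption \ref{assump:kernel} gives $|\hatphi(\ell)|\le 1+c_1$. The partial sum is therefore $O(\ell_\rho^2)=O(\rho^{-2})$.

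For the tail $\ell>\ell_\rho$, the bound $\hatphi\le c_2$ alone does not give summability. We would instead invoke the decay in Assumption \ref{assump2} with $s>1$: since $(1+\rho\ell)^{-4s}$ is summable against $\ell$,
\[
\sum_{\ell>\ell_\rho}\ell(1+\rho\ell)^{-4s}\le C\rho^{-2}\int_1^\infty u(1+u)^{-4s}\,du=C\rho^{-2}.
\]
The same bound is implicitly used in Theorem \ref{thm:QMC_error}, via $\|\psi_\rho\|_{H^s}\le C\rho^{-s}$.

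Combining, $\bbE\|\calN_\epsilon\|\le C(B)^{1/2}\rho^{-1}N^{-1/2}$, which completes the estimate.
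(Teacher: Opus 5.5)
Your proof is correct and follows the paper's skeleton. Both arguments use the triangle inequality, Theorem~\ref{thm:QMC_error} with $\tau=0$ for the deterministic part, Jensen's inequality, and independence with zero mean to remove the cross terms. That leaves $N$ diagonal terms and the factor $N^{-1}$. The paper's intermediate display shows $N^{-2}$, but its final conclusion agrees with yours.

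The difference is in how you establish $\int_{\bbS^2}\|\Psi_\rho(x,x_j)\|_2^2\,\dmu(x)\le C\rho^{-2}$. The paper bounds $\bbE\|\Psi_\rho(x,x_j)\beps_j\|_2^2\le B\|\Psi_\rho(x,x_j)\|_2^2$ and then asserts $\|\Psi_\rho(x,x_j)\|_2^2\le C\|\psi_\rho(\cdot,x_j)\|_{L_2}^2\le C\rho^{-2}$. It takes the scalar bound from \cite[Lem.~2.5]{Sun_2025_monte}, which exploits the spatial scaling form $C_{\psi,\rho}\phi(\rho^{-1}\|x-y\|)$ of the kernel. The paper never argues the link between the matrix kernel and the scalar kernel. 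That link can only hold after integrating in $x$: at $x=x_j$ one has $\Psi_\rho(x,x)=2\kappa(1)(\mathbf{I}-xx^\top)$ with $\kappa(1)\sim\rho^{-2}$, so the pointwise claim fails.

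Your computation of $A(y)$ via orthonormality and \eqref{eq:addition_thm} supplies exactly this link: $\|A(y)\|_2\le\operatorname{tr}A(y)\le 2\|\psi_\rho(\cdot\,\cdot y)\|_{L_2}^2$. Your trace inequality $\operatorname{tr}(A_jB_j)\le\|A_j\|_2\operatorname{tr}(B_j)$ plays the role of the paper's pointwise step, applied after integrating in $x$.

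You then bound $\sum_\ell(2\ell+1)\hatphi(\ell)^2$ spectrally instead of citing the spatial lemma. This costs you Assumption~\ref{assump2}, where any $s>1/2$ suffices. That assumption is not among the stated hypotheses of Theorem~\ref{thm:QMC_error}, but its proof already uses it implicitly through Lemma~\ref{lem:Equiv_Sobolev_Native}. The paper's route instead relies on the concrete form of the scaled kernel. You are right that Assumption~\ref{assump:kernel} alone would not give this bound.
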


\begin{proof} We break the expected $\bld{L}_2$-error in two parts in the order of stochastic error and deterministic error as follows.
	\[
	\big\|\calL_{\Psi_\rho}^{\epsilon}\bff-\bff\big\|_{\bld{L}_2(\bbS^2)} \le \big\|\calL_{\Psi_\rho}^{\epsilon}\bff-\calL_{\Psi_\rho}\bff\big\|_{\bld{L}_2(\bbS^2)} + \big\|\calL_{\Psi_\rho}\bff-\bff\big\|_{\bld{L}_2(\bbS^2)}.
	\]
	We use Theorem \ref{thm:QMC_error} for the case $\tau=0$ to derive the deterministic error estimate:
	\begin{equation*}
		\big\|\calL_{\Psi_\rho}\bff-\bff\big\|_{\bld{L}_2(\bbS^2)} \leq C\left(\rho^\sigma+\rho^{-\sigma}N^{-\frac{\sigma}{2}}\right)\|\bff\|_{\bHsig}.
	\end{equation*}
	The rest of the proof is devoted to the stochastic error estimate. 
	Let $\boldsymbol{\eta} := \calL_{\Psi_\rho}^{\epsilon}\bff - \calL_{\Psi_\rho}\bff$. Using Jensen's inequality $\bbE\|\boldsymbol{\eta}\|_{\bld{L}_2} \leq (\bbE\|\boldsymbol{\eta}\|_{\bld{L}_2}^2)^{1/2}$ followed by exchanging the order of integrals, we have 
	\begin{align*}
		\bbE\|\boldsymbol{\eta}\|_{\bld{L}_2(\bbS^2)}^2 
		&= \frac{(4\pi)^2}{N^2} \int_{\bbS^2} \bbE \Bigl\| \sum_{j=1}^{N} \Psi_\rho(x,x_j)\beps_j \Bigr\|^2_{\bld{L}_2(\bbS^2)} \dmu(x) \\
		&= \frac{(4\pi)^2}{N^2} \int_{\bbS^2} \sum_{j,k=1}^N \bbE \Big[ \beps_j^\top \Psi_\rho(x,x_j)^\top \Psi_\rho(x,x_k)\beps_k \Big] \dmu(x)\\
		&=\frac{(4\pi)^2}{N^2} \sum_{j=1}^N \int_{\bbS^2} \bbE \|\Psi_\rho(x,x_j)\beps_j\|_2^2 \dmu(x).
	\end{align*}
	In the last equation above, we have used the equations
	$\bbE[\beps_j \beps_k^\top] = 0$ if $j \ne k$, which hold true because the random vectors $\beps_1, \ldots, \beps_N$ are independent.  Utilizing the 
	inequalities $\bbE \|\Psi_\rho(x,x_j)\beps_j\|_2^2 \le \bbE \left(\|\beps_j\|_2^2\right)\|\Psi_\rho(x,x_j)\|_2^2 = {\rm tr} (B_j) \|\Psi_\rho(x,x_j)\|_2^2 \le B \|\Psi_\rho(x,x_j)\|_2^2, $ and 
	$\|\Psi_\rho(x,x_j)\|_2^2 \le C\|\psi_\rho(\cdot,x_j)\|^2_{L_2(\bbS^2)}\leq C\rho^{-2}, \; \forall\ 1 \le j \le N;$ see \cite[Lem.~2.5]{Sun_2025_monte}, we derive
	\begin{equation*}
		\bbE\|\boldsymbol{\eta}\|_{\bld{L}_2(\bbS^2)}^2 \leq \frac{C}{N^2} \|\psi_\rho(\cdot,x_j)\|_{L_2(\bbS^2)}^2 \leq \frac{C}{N^2} \rho^{-2},
	\end{equation*}
	which implies that
	\[
	\bbE \big\|\calL_{\Psi_\rho}^{\epsilon}\bff-\calL_{\Psi_\rho}\bff\big\|_{\bld{L}_2(\bbS^2)}\le C\rho^{-1}N^{-\frac{1}{2}},
	\]
	and therefore completes the proof.
	
\end{proof}

\bigskip

\section{Numerical examples}
\label{sec:numer_exam}

In this section, we validate the theoretical Sobolev error estimates derived for the vector quasi-interpolant and its Helmholtz--Hodge decomposition. We assess convergence rates using two benchmark vector fields characterized by varying degrees of Sobolev regularity. Each field $\bff$ is generated explicitly from a scalar stream function $s$ and a velocity potential $v$ via the representation:
$$\bff = \bff_\dive + \bff_\curl = \mathbf{L}_* s + \nabla_* v,$$
which allows for an exact separation into divergence-free and curl-free components. The construction of these fields follows the benchmarks established in \cite{Fuselier_2009SINUM_stability} and implemented in \cite{LeGia_2021ToMS_algorithm}. The two test fields are defined as follows:
\begin{itemize}
	\item \textbf{Field 1.} The components are defined by spherical harmonics:
	\begin{align*}
		s_1(x) &= -\frac{1}{\sqrt{3}} Y_{1,0}(x) + \frac{8\sqrt{2}}{3\sqrt{385}} Y_{5,4}(x), \\
		v_1(x) &= \frac{1}{25} \left[ Y_{4,0}(x) + Y_{6,-3}(x) \right].
	\end{align*}
	\item \textbf{Field 2.} We retain the stream function from Field 1 ($s_2 = s_1$) but employ a linear combination of cubic B-splines for the potential:
	\begin{align*}
		v_2(x) &= \tfrac{1}{8}f(x; 5, \tfrac{\pi}{6}, 0) - \tfrac{1}{7}f(x; 3, \tfrac{\pi}{5}, -\tfrac{\pi}{7}) \\
		&\quad + \tfrac{1}{9}f(x; 5, -\tfrac{\pi}{6}, \tfrac{\pi}{2}) - \tfrac{1}{8}f(x; 3, -\tfrac{\pi}{5}, \tfrac{\pi}{3}),
	\end{align*}
	where $f=f(x;x_c,\zeta)$ is the cubic B-spline centered at $x_c$ (with spherical coordinates $(\theta_c, \lambda_c)$) and support parameter $\zeta$.
	
\end{itemize}

The two test fields exhibit distinct regularity properties. Field 1 is smooth, with $\bff_1 \in \bld{C}^\infty(\mathbb{S}^2)$. Field 2 has limited regularity governed by the cubic B-spline $f \in H^\tau(\mathbb{S}^2)$ for $\tau < 4$; consequently, the total field satisfies $\bff_2 \in \bH^\sigma(\mathbb{S}^2)$ for $\sigma < 3$. For the numerical experiments, we employ two classes of zonal kernels: restricted Gaussian and restricted $\text{WE}_{3,2}$ (see Table 1). Both base kernels satisfy Assumption 2.1 with $m=2$. To achieve higher orders ($m=4, 6, 8$), we apply the technique described in \cite{Farrell_2017IMAJNA_multilevel,Sun_2025IMAJNA_spherical}. The higher-order kernels $\psi_m(t;\rho)$ are constructed as linear combinations of the scaled base kernel $\psi_2(t;\rho)$:
\begin{equation}
	\psi_m(t;\rho) = \sum_{j=1}^K c_j \psi_{2}(t;\sqrt{a_j}\rho), \quad \text{with } c_j = \prod_{\substack{k=1,k\neq j}}^K \frac{a_k}{a_k-a_j}.
\end{equation}
The scaling parameters are chosen as $\{a_j\}_{j=1}^2 = \{1/2, 2/3\}$ for $m=4$, $\{a_j\}_{j=1}^3 = \{1/3, 2/3, 1\}$ for $m=6$, and $\{a_j\}_{j=1}^4 = \{1/4, 1/2, 3/4, 1\}$ for $m=8$.

\subsection{Convergence tests}
Our numerical investigations utilize the symmetric spherical $t$-designs (STDs) and maximum determinant (MD) points, which can be found in \cite{Womersley_misc_interpolation}. These point configurations are chosen for their quasi-uniformity and excellent quadrature properties, though we note that other distributions, such as minimum energy points \cite{Womersley_misc_interpolation}, generalized spiral points \cite{Brauchart_2014MCoM_qmc} and even random points fit quite well within our theoretical framework \cite{Sun_2025_monte}. The source code is available for reproducibility at the repository \cite{sun2026githubcode}.

We consider a sequence of STDs indexed by the parameter $t \in \{53, 75, 107, 155, 223\}$, which yields point sets $X_N$ with cardinalities $N$ ranging from 1,434 to 24,978. The associated mesh norm (fill distance) satisfies the scaling $h_{X_N} \sim N^{-1/2}$. To test the approximation accuracy, we compute the discrete $\ell_2(Y)$ error on a dense evaluation grid $Y$ ($|Y| = 52,978$). The error is approximated via the quadrature rule:
\begin{equation}
	\| \mathbf{e} \|_{\bld{L}_2(\mathbb{S}^2)} \approx \Big( \frac{4\pi}{|Y|} \sum_{\by \in Y} \| \mathbf{f}(\by) - \mathbf{s}(\by) \|_2^2 \Big)^{1/2}.
\end{equation}

Tables \ref{tab:Conv_Gauss} and \ref{tab:Conv_WE} summarize the convergence results for the smooth target Field 1 using restricted Gaussian and Wendland kernels, respectively. As established in Theorem \ref{thm:QMC_error}, setting the optimal shape parameter $\rho = \mathcal{O}(h^{1/2})$ for $\tau=0$ yields a convergence rate of $\mathcal{O}(h^{m/2})$.
We observe that the approximation error decays algebraically with respect to the mesh size, with rates improving significantly as the kernel order increases. Specifically, for kernels satisfying Assumption \ref{assump:kernel} with orders $m \in \{2, 4, 6, 8\}$, the observed convergence rates approach $\mathcal{O}(h^1)$, $\mathcal{O}(h^2)$, $\mathcal{O}(h^3)$, and $\mathcal{O}(h^4)$, respectively. These results demonstrate that the method attains the theoretical order of accuracy for smooth vector fields.

We further evaluate the method on Field 2 which possesses limited regularity. Figure \ref{fig.Err_Field2} displays the $\ell_2$ errors for both the combined field and its Helmholtz--Hodge components by using the proposed vector quasi-interpolation sampled at MD points with $\text{WE}_{3,2}$ kernel. In this scenario, Theorem \ref{thm:QMC_error} implies that the convergence is limited by the target regularity when $m \geq \sigma$. Consequently, we still employ the scaling parameter $\rho = \mathcal{O}(h^{1/2})$. As illustrated in Figure \ref{fig.Err_Field2}, the error decay saturates as the kernel order $m$ increases. The numerical results show a convergence rate of approximately $\mathcal{O}(h^3)$ for large $m$, which confirms that the error is dominated by the regularity of the target field rather than the kernel order. The numerical solutions and corresponding pointwise errors for Field 1 and Field 2 by using $\text{WE}_{3,2}$ are presented in Figure \ref{fig.NumerSolution}.

\begin{table}
	\centering
	\caption{Convergence results for the approximation of Field 1 using restricted Gaussian kernels of varying orders ($m= 2, 4, 6, 8$), where the shape parameter scales as $\rho = \mathcal{O}(h^{1/2})$ with $0.4h^{1/2},0.75h^{1/2},1h^{1/2},1.25h^{1/2}$.}
	
	\begin{tabular}{c *{8}{c}} 
		\toprule[1pt] 
		$N$ & $m=2$ & rate & $m=4$ & rate & $m=6$ & rate & $m=8$ & rate \\
		\midrule 
		
		1434  & 4.908e-02 &  & 5.965e-03 &  & 1.373e-03 &  & 2.527e-04 &  \\
		2852  & 3.511e-02 & 0.97 & 3.066e-03 & 1.94 & 5.113e-04 & 2.87 & 6.742e-05 & 3.84 \\
		5780  & 2.482e-02 & 0.98 & 1.537e-03 & 1.95 & 1.829e-04 & 2.91 & 1.706e-05 & 3.89 \\
		12092 & 1.724e-02 & 0.99 & 7.435e-04 & 1.97 & 6.182e-05 & 2.94 & 4.010e-06 & 3.92 \\
		24978 & 1.203e-02 & 0.99 & 3.628e-04 & 1.98 & 2.115e-05 & 2.96 & 9.575e-07 & 3.95 \\
		\bottomrule[1pt] 
	\end{tabular}
	\label{tab:Conv_Gauss}
\end{table}

\begin{table}
	\centering
	\caption{Convergence results for the  approximation of Field 1 using restricted WE$_{3,2}$ of varying orders ($m= 2, 4, 6,8$), where the shape parameter scales as $\rho = \mathcal{O}(h^{1/2})$ with $1.6h^{1/2},3.2h^{1/2}, 4.3h^{1/2}, 6.4h^{1/2}$.}
	\begin{tabular}{c *{8}{c}} 
		\toprule[1pt] 
		$N$ & $m=2$ & rate & $m=4$ & rate & $m=6$ & rate & $m=8$ & rate \\
		\midrule 
		1434  & 4.204e-02 &  & 5.329e-03 & & 1.172e-03 &  & 6.135e-04 &  \\
		2852  & 2.994e-02 & 0.99 & 2.700e-03 & 1.98 & 4.158e-04 & 3.01 & 1.640e-04 & 3.84 \\
		5780  & 2.113e-02 & 0.99 & 1.346e-03 & 1.97 & 1.451e-04 & 2.98 & 4.168e-05 & 3.88 \\
		12092 & 1.466e-02 & 0.99 & 6.492e-04 & 1.98 & 4.839e-05 & 2.98 & 9.861e-06 & 3.91 \\
		24978 & 1.023e-02 & 0.99 & 3.163e-04 & 1.98 & 1.639e-05 & 2.98 & 2.370e-06 & 3.93 \\
		\bottomrule[1pt] 
	\end{tabular}
	\label{tab:Conv_WE}
\end{table}

\begin{figure}
	\centering
	\begin{tikzpicture}
		\begin{groupplot}[
			group style={
				group size=3 by 1, 
				horizontal sep=0pt, 
				vertical sep=0pt,
			},
			width=4cm, height=5cm, 
			scale only axis,           
			xmode=log, ymode=log,
			xmin=0.003, xmax=0.2,
			ymin=1e-4, ymax=5e0,
			grid=both,
			major grid style={line width=0.2pt, draw=gray!40, dashed},
			minor grid style={line width=0.1pt, draw=gray!15, dotted},
			minor x tick num=3, minor y tick num=3,
			ticklabel style={font=\tiny},
			xlabel={$h$},
			xlabel style={font=\small, yshift=6pt},
			legend style={
				at={(0.98,0.05)}, 
				anchor=south east,
				font=\tiny, 
				cells={anchor=west},
				legend columns=1,
				inner sep=2pt,
				outer sep=1pt,
				draw=none, 
				fill opacity=0.8,
				text opacity=1
			},
			legend image post style={mark size=1.8pt}, 
			cycle list={
				{color={myred}, mark=triangle*, line width=0.8pt,mark size=2.5pt},
				{color={myblue}, mark=square*, line width=0.8pt,mark size=1.5pt},
				{color={mygreen}, mark=diamond*, line width=0.8pt,mark size=2.5pt}
			},
			]
			
			\nextgroupplot[
			ylabel={$\ell_2$ error}, 
			title={combined},
			title style={font=\large, yshift=-3pt},
			]
			
			\addplot table[x index=0, y index=2, col sep=space] {data_VecQI/field2comwe.txt};
			\addlegendentry{WE$_{3,2}$, $m=4$}
			
			\addplot table[x index=0, y index=3, col sep=space] {data_VecQI/field2comwe.txt};
			\addlegendentry{WE$_{3,2}$, $m=6$}
			
			\addplot table[x index=0, y index=4, col sep=space] {data_VecQI/field2comwe.txt};
			\addlegendentry{WE$_{3,2}$, $m=8$}

			\addplot[
			domain=0.01:0.06,
			samples=2,
			color=black,
			line width=0.5pt,  %
			dashed,
			forget plot,
			] {400 * x^3};
			
			\node[anchor=south west, font=\tiny] at (axis cs:0.035,0.01) {$h^{3}$};

			\nextgroupplot[
			title={div-free},
			title style={font=\large, yshift=-3pt},
			yticklabels={},                 
			ylabel={},
			axis y line*=right, 
			]

			
			\addplot table[x index=0, y index=2, col sep=space] {data_VecQI/field2divwe.txt};
			\addlegendentry{WE$_{3,2}$, $m=4$}
			
			\addplot table[x index=0, y index=3, col sep=space] {data_VecQI/field2divwe.txt};
			\addlegendentry{WE$_{3,2}$, $m=6$}
			
			\addplot table[x index=0, y index=4, col sep=space] {data_VecQI/field2divwe.txt};
			\addlegendentry{WE$_{3,2}$, $m=8$}

			\addplot[
			domain=0.01:0.06,
			samples=2,
			color=black,
			line width=0.5pt,  %
			dashed,
			forget plot,
			] {250 * x^3};
			
			\node[anchor=south west, font=\tiny] at (axis  cs:0.04,0.01) {$h^{3}$};

			\nextgroupplot[
			title={curl-free},
			title style={font=\large, yshift=-3pt},
			yticklabels={},                 
			ylabel={},
			axis y line*=right, 
			]
			
			\addplot table[x index=0, y index=2, col sep=space] {data_VecQI/field2curlwe.txt};
			\addlegendentry{WE$_{3,2}$, $m=4$}
			
			\addplot table[x index=0, y index=3, col sep=space] {data_VecQI/field2curlwe.txt};
			\addlegendentry{WE$_{3,2}$, $m=6$}
			
			\addplot table[x index=0, y index=4, col sep=space] {data_VecQI/field2curlwe.txt};
			\addlegendentry{WE$_{3,2}$, $m=8$}

			\addplot[
			domain=0.01:0.06,
			samples=2,
			color=black,
			line width=0.5pt, 
			dashed,
			forget plot,
			] {300 * x^3};
			
			\node[anchor=south west, font=\tiny] at (axis cs:0.04,0.01) {$h^{3}$};
			
			
			
		\end{groupplot}
	\end{tikzpicture}
	
	\captionsetup{font=normalsize}
	\caption{Numerical errors and convergence orders of vector quasi-interpolation for approximating Field 2 with different numbers of MD nodes, using divergence-free and curl-free kernels constructed by restricted $\text{WE}_{3,2}$ kernel satisfying assumption 2.1 ( $m=4,6,8$).}
	\label{fig.Err_Field2}
\end{figure}

\begin{figure}
	\centering
	\begin{tikzpicture}
		\matrix (M) [
		matrix of nodes,
		nodes={inner sep=0pt, anchor=center}, 
		column sep=0.2cm, 
		row sep=0.5cm,   
		] {
			\includegraphics[width=0.3\textwidth, trim=2.3cm 0cm 2.3cm 0cm, clip]{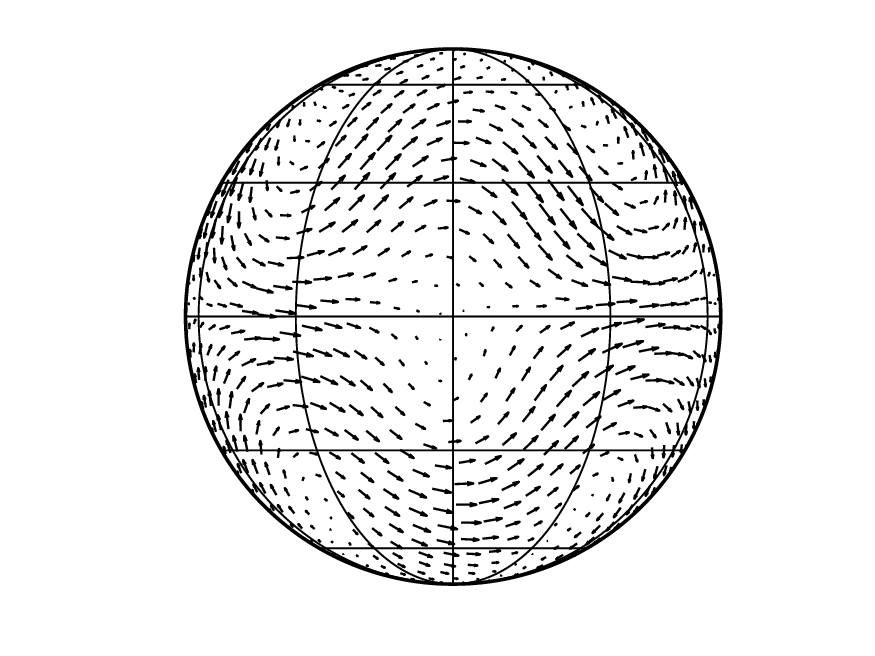} &
			\includegraphics[width=0.3\textwidth, trim=2.3cm 0cm 2.3cm 0cm, clip]{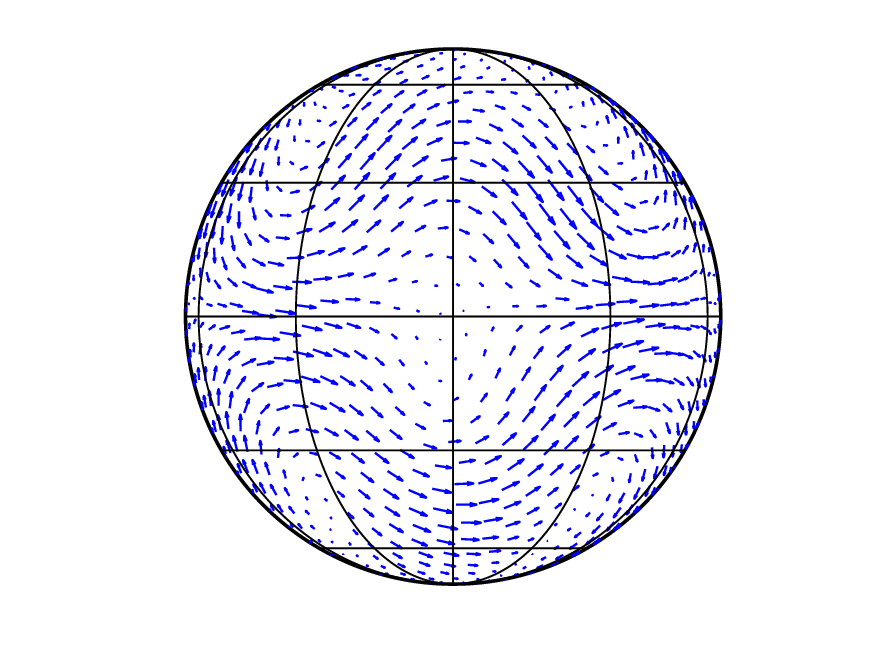} &
			\hspace*{-0.1cm}\includegraphics[width=0.36\textwidth, trim=1.2cm 0cm 1.2cm 0cm, clip]{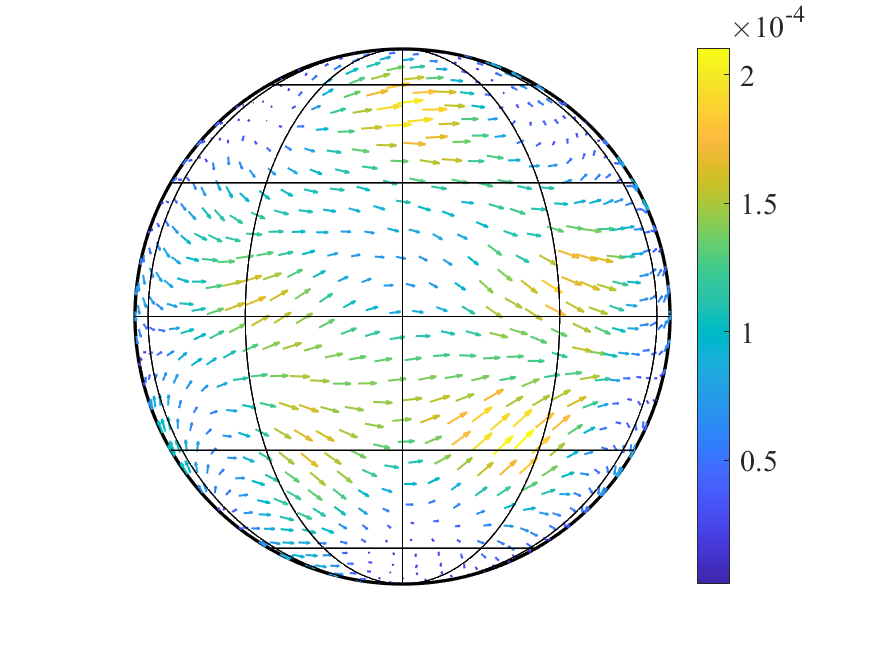} \\
			\includegraphics[width=0.3\textwidth, trim=2.3cm 0cm 2.3cm 0cm, clip]{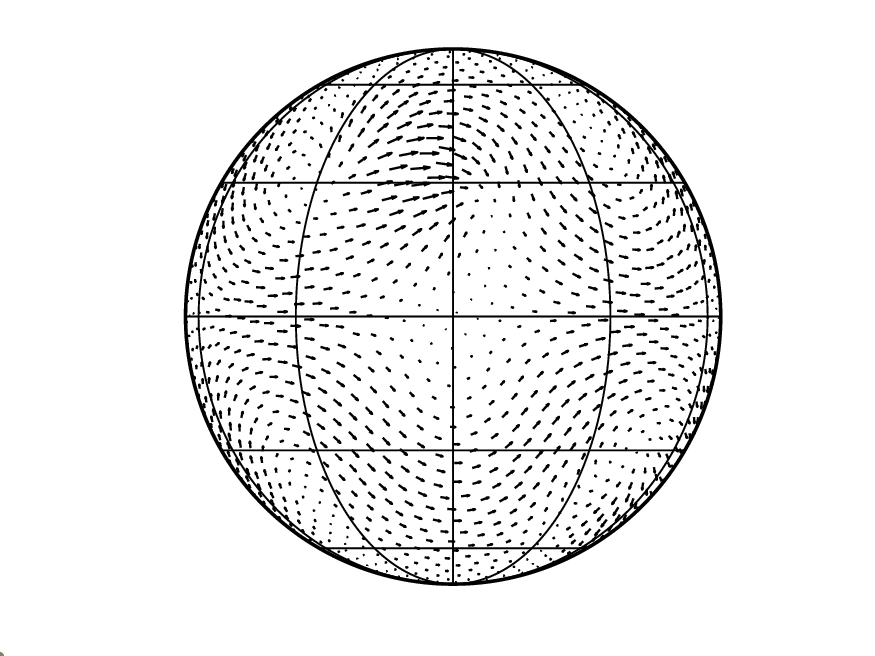} &
			\includegraphics[width=0.3\textwidth, trim=2.3cm 0cm 2.3cm 0cm, clip]{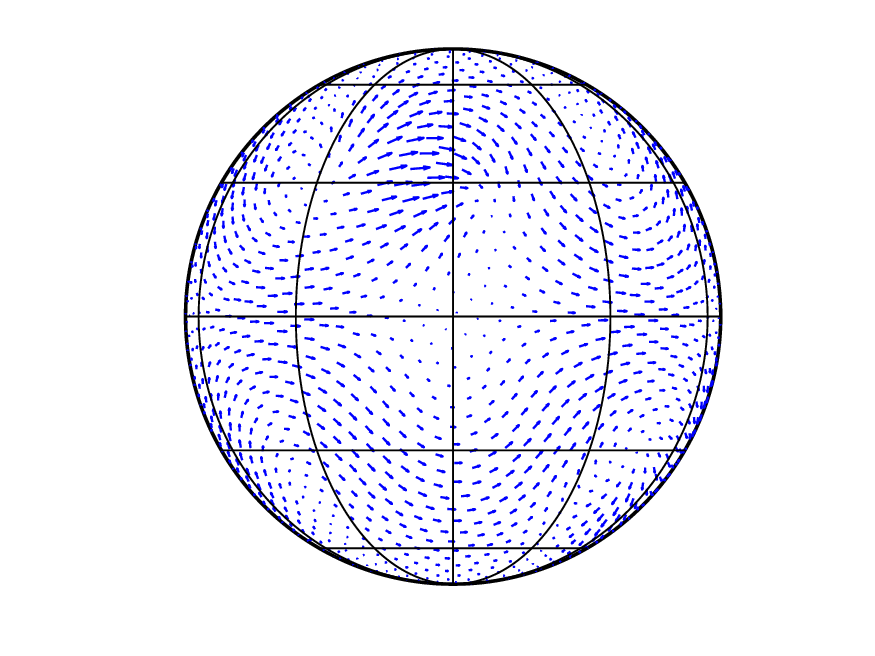} &
			\hspace*{-0.1cm}\includegraphics[width=0.36\textwidth, trim=1.2cm 0cm 1.2cm 0cm, clip]{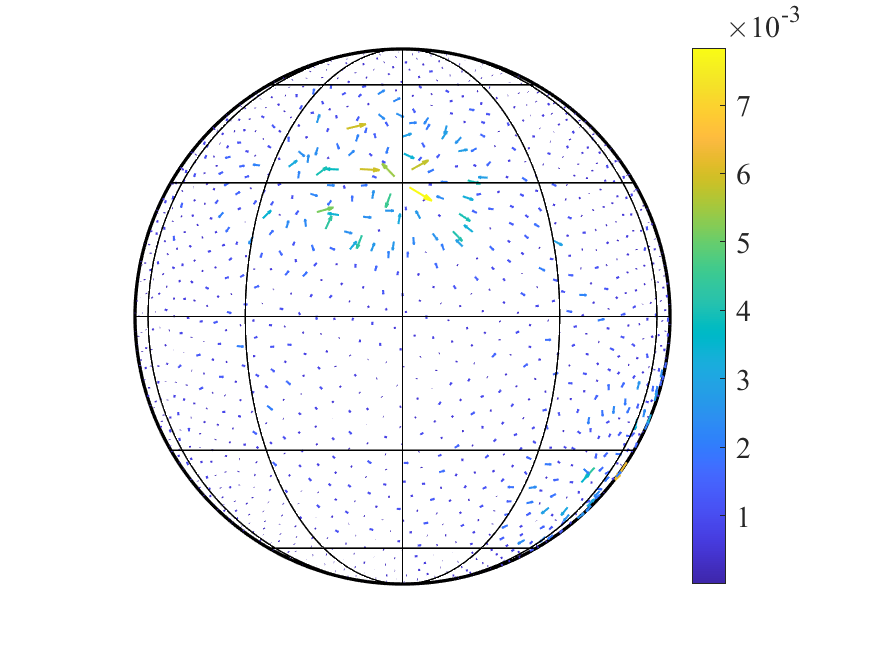} \\
		};

		\node [anchor=south, inner sep=1pt, yshift=-2mm, font=\small] at (M-1-1.north) {Exact};
		\node [anchor=south, inner sep=1pt, yshift=-2mm, font=\small] at (M-1-2.north) {Numerical};
		\node [anchor=south, inner sep=1pt, yshift=-2mm, font=\small,xshift=-0.3cm] at (M-1-3.north) {Error};
		
		\node [anchor=north, inner sep=1pt, yshift=2mm] at (M-1-2.south) {\textbf{(a) Field 1}};
		\node [anchor=north, inner sep=1pt, yshift=2mm] at (M-2-2.south) {\textbf{(b) Field 2}};	
	\end{tikzpicture}
	
	\captionsetup{font=normalsize}
	\caption{Comparison of the exact solution (left), numerical solution (middle), and pointwise error (right) for Field 1 and Field 2. Results were computed using the proposed vector quasi-interpolation scheme with $N=1434$ STD points for Field 1 and  $N=2,500$ MD nodes for Field 2. }
	\label{fig.NumerSolution}
\end{figure}

\subsection{Computational efficiency}
In this experiment, we benchmark the proposed spherical vector field quasi-interpolation (VQI) against the standard vector SBF interpolation developed by Fuselier and Wright \cite{Fuselier_2009SINUM_stability}. We use Field 2 as the target vector field. For the VQI method, we employ the restricted $\text{WE}_{3,2}$ kernel, which satisfies Assumption \ref{assump:kernel} with $m=8$. To ensure a fair comparison of approximation power, the SBF interpolation employs the same $\text{WE}_{3,2}$ kernel. Theoretical estimates suggest that for this limited smooth target field, both methods should asymptotically yield an error reduction rate of $\mathcal{O}(h^3)$, as convergence is limited by the regularity of the vector field.

We first explore the computational complexity of the two methods. As illustrated in Figure \ref{fig.CPU}(a), the proposed VQI method demonstrates linear scaling with respect to the number of nodes, $\mathcal{O}(N)$, a direct consequence of its explicit summation formulation. This contrasts with RBF interpolation, which requires the solution of a symmetric positive-definite linear system; this typically entails $\mathcal{O}(N^3)$ complexity for direct solvers or necessitates the use of preconditioned iterative methods.

Furthermore, we evaluate the computational cost required to achieve specific target error tolerances ($\text{Tol} \in \{3\cdot 10^{-4}, \dots, 2\cdot 10^{-3}\}$). Figure \ref{fig.CPU}(b) presents a work-precision diagram, plotting CPU time against the achieved accuracy. The results indicate that the VQI method is significantly more efficient; to attain a fixed error level, the VQI method requires approximately one-half of the computation time of the interpolation method. This efficiency gap widens as $N$ increases, which highlights the suitability of the proposed explicit approach for large-scale problems where solving dense linear systems becomes computationally prohibitive.

\begin{figure}[htbp]
	\centering
	\begin{tikzpicture}[scale=0.9]
		\begin{axis}[
			xmode=log,
			ymode=log,
			xmin=500, xmax=3.5e4,
			ymin=1, ymax=300,
			xlabel={$N$},
			xlabel style={
				at={(axis description cs:0.5,0)}, 
				yshift=-1.2em, 
				font=\footnotesize
			},
			ylabel={},  %
			ylabel style={
				rotate=-90, 
				at={(axis description cs:-0.12,0.5)}, 
				anchor=south,
				font=\footnotesize
			},
			grid=major,
			major grid style={
				line width=0.3pt, 
				draw=gray!60, 
				dash pattern=on 2pt off 2pt,
				opacity=0.6
			},
			minor grid style={
				line width=0.15pt, 
				draw=gray!40, 
				dash pattern=on 1pt off 1pt,
				opacity=0.4
			},
			xtick={1e3,1e4},
			ytick={1e0,1e1,1e2},
			minor x tick num=3,
			minor y tick num=3,
			tick label style={font=\footnotesize},
			legend style={
				at={(0.98,0.02)},
				anchor=south east,
				font=\small, 
				cells={anchor=west},
				legend columns=1,
				inner sep=2pt,
				outer sep=1pt,
				draw=black, 
				fill opacity=0.8,
				text opacity=1
			},
			legend image post style={mark size=1.8pt},
			title={CPU time of quasi-interpolation},
			title style={font=\large, yshift=-3pt},
			]

			\addplot[
			color={myblue}, 
			only marks,             
			mark =*,
			mark size= 2pt,
			] table[
			x index=0,
			y index=1,
			col sep=tab
			] {data_VecQI/CPU.txt};
			\addlegendentry{Quasi-interpolation};
			
			\addplot[
			color=myred,
			dashed,
			line width=1pt,            
			domain=800:3e4,            
			samples=100,               
			] {0.0015* x^1.13};
			\addlegendentry{$\mathcal{O}(N^{1.13})$};
		\end{axis}
	\end{tikzpicture}
	\hspace{0.4cm}
	%
	\begin{tikzpicture}[scale=0.9]
		\begin{axis}[
			xmode=log,
			ymode=log,
			x dir=reverse,
			xmin=1e-4, xmax=3e-3,
			ymin=1e1, ymax=600,
			xlabel={Target error ($\text{Tol}$)},
			xlabel style={
				at={(axis description cs:0.5,0)}, 
				yshift=-1.2em, 
				font=\footnotesize
			},
			ylabel={},
			ylabel style={
				rotate=0, 
				at={(axis description cs:-0.12,0.5)}, 
				anchor=south,
				font=\footnotesize
			},
			grid=major,
			major grid style={
				line width=0.3pt, 
				draw=gray!60, 
				dash pattern=on 2pt off 2pt,
				opacity=0.6
			},
			minor grid style={
				line width=0.15pt, 
				draw=gray!40, 
				dash pattern=on 1pt off 1pt,
				opacity=0.4
			},
			xtick={1e-3,1e-4},
			ytick={1e1,1e2},
			minor x tick num=3,
			minor y tick num=3,
			tick label style={font=\footnotesize},
			legend style={
				at={(0.98,0.02)}, 
				anchor=south east,
				font=\small, %
				cells={anchor=west},
				legend columns=1,
				inner sep=2pt,
				outer sep=1pt,
				draw=black, %
				fill opacity=0.8,
				text opacity=1
			},
			legend image post style={mark size=1.8pt},
			title={Comparison with SBF interpolation},
			title style={font=\large, yshift=-3pt},
			]

			\addplot[
			color={myblue}, 
			line width=1pt,              
			mark = *,
			mark size=1.5pt,
			] table[
			x index=0,
			y index=1,
			col sep=tab
			] {data_VecQI/computationtime.txt};
			\addlegendentry{Quasi-interpolation};
			
			\addplot[
			color={myred}, 
			line width=1pt,
			mark = triangle*,
			mark size=2pt
			] table[
			x index=0,
			y index=2,
			col sep=tab
			] {data_VecQI/computationtime.txt};
			\addlegendentry{Interpolation};
			
		\end{axis}
	\end{tikzpicture}
	\vspace{-0.3cm}
	\captionsetup{font=normalsize}
	\caption{ Evaluation of computational efficiency. \textbf{Left}: CPU time (s) versus the number of nodes $N$, confirming the linear complexity $\mathcal{O}(N)$ of the proposed quasi-interpolation. \textbf{Right}: CPU time required by quasi-interpolation (QI) versus SBF interpolation to achieve fixed target error tolerances.}
	\label{fig.CPU}
\end{figure}

\subsection{Simulation of noisy data}

Finally, we evaluate the stability of the proposed method in the presence of noise. We introduce additive Gaussian noise to the vector field samples via
\begin{equation}
	\tilde{\mathbf{f}}(\mathbf{x}_i) = \mathbf{f}(\mathbf{x}_i) + \boldsymbol{\epsilon}_{i}, \quad \boldsymbol{\epsilon}_{i} \sim \mathcal{N}(0, \delta^2 \mathbf{I}),
\end{equation}
considering four distinct noise levels $\delta \in \{0.001, 0.01, 0.1, 0.5\}$. Approximation accuracy is quantified using the root mean square error (RMSE) computed on a dense evaluation grid of size $|Y|=52,978$. To ensure statistical reliability, all reported results represent the average of 30 independent realizations.

Figure \ref{fig.Noisydata} depicts the convergence profiles of both methods across varying noise intensities. A sharp contrast in stability is evident. The proposed VQI operator, acting as a discrete approximation to a convolution integral, inherently possesses smoothing properties that filter high-frequency noise components. Consequently, the VQI error decays monotonically with increasing node density $N$, even under significant noise ($\delta=0.5$). Notably, this robustness is achieved without the need for auxiliary regularization parameters or matrix ill-conditioning treatments, confirming the method's utility as an effective mollifier. Conversely, standard RBF interpolation enforces the strict interpolation condition, thereby incorporating the noise directly into the approximant. While the method converges for negligible noise, the error quickly saturates at higher noise levels, bounded below by the noise floor. Further error reduction in the interpolation setting would require abandoning exact interpolation in favor of smoothing splines, whereas the proposed VQI handles the noise naturally.

\begin{figure}
	\centering
	\begin{tikzpicture}[scale=0.9]
		\begin{axis}[
			xmode=log,
			ymode=log,
			xmin=50, xmax=1.8e4,
			ymin=1e-4, ymax=2e0,
			xlabel={$N$},
			xlabel style={
				at={(axis description cs:0.5,0)}, 
				yshift=-1.2em, 
				font=\footnotesize
			},
			ylabel={},
			ylabel style={
				rotate=-90, 
				at={(axis description cs:-0.12,0.5)}, 
				anchor=south,
				font=\footnotesize
			},
			grid=major,
			major grid style={
				line width=0.3pt, 
				draw=gray!60, 
				dash pattern=on 2pt off 2pt,
				opacity=0.6
			},
			minor grid style={
				line width=0.15pt, 
				draw=gray!40, 
				dash pattern=on 1pt off 1pt,
				opacity=0.4
			},
			xtick={1e2,1e3,1e4},
			ytick={1e-4,1e-3,1e-2,1e-1,1e0},
			minor x tick num=3,
			minor y tick num=3,
			tick label style={font=\footnotesize},
			legend style={
				at={(0.04,0.02)}, 
				anchor=south west,
				font=\small, %
				cells={anchor=west},
				legend columns=1,
				inner sep=2pt,
				outer sep=1pt,
				draw= none, %
				fill opacity=0.8,
				text opacity=1
			},
			legend image post style={mark size=1.8pt},
			title={Spherical quasi-interpolation},
			title style={font=\large, yshift=-3pt},
			]

			\addplot[
			color={myred}, 
			line width=0.5pt,              
			mark = triangle,
			mark size= 1.5pt,
			] table[
			x index=0,
			y index=1,
			col sep=tab
			] {data_VecQI/field2noise.txt};
			\addlegendentry{$\delta=0.5$};
			
			\addplot[
			color={myblue}, 
			line width=0.5pt,
			mark = square,
			mark size=0.75pt
			] table[
			x index=0,
			y index=2,
			col sep=tab
			] {data_VecQI/field2noise.txt};
			\addlegendentry{$\delta=0.1$};
			
			\addplot[
			color={mygreen}, 
			line width=0.5pt,
			mark = diamond,
			mark size=1.5pt
			] table[
			x index=0,
			y index=3,
			col sep=tab
			] {data_VecQI/field2noise.txt};
			\addlegendentry{$\delta=0.01$};
			
			\addplot[
			color={myorange}, 
			line width=0.5pt,
			mark = o,
			mark size=1pt
			] table[
			x index=0,
			y index=4,
			col sep=tab
			] {data_VecQI/field2noise.txt};
			\addlegendentry{$\delta=0.001$};
		\end{axis}
	\end{tikzpicture}
	\hspace{0.5cm}
	%
	\begin{tikzpicture}[scale = 0.9]
		\begin{axis}[
			xmode=log,
			ymode=log,
			xmin=50, xmax=1.8e4,
			ymin=1e-4, ymax=2e0,
			xlabel={$N$},
			xlabel style={
				at={(axis description cs:0.5,0)}, 
				yshift=-1.2em, 
				font=\footnotesize
			},
			ylabel={},  %
			ylabel style={
				rotate=-90, 
				at={(axis description cs:-0.12,0.5)}, 
				anchor=south,
				font=\footnotesize
			},
			grid=major,
			major grid style={
				line width=0.3pt, 
				draw=gray!60, 
				dash pattern=on 2pt off 2pt,
				opacity=0.6
			},
			minor grid style={
				line width=0.15pt, 
				draw=gray!40, 
				dash pattern=on 1pt off 1pt,
				opacity=0.4
			},
			xtick={1e2,1e3,1e4},
			ytick={1e-4,1e-3,1e-2,1e-1,1e0},
			minor x tick num=3,
			minor y tick num=3,
			tick label style={font=\footnotesize},
			legend style={
				at={(0.04,0.02)}, 
				anchor=south west,
				font=\small, 
				cells={anchor=west},
				legend columns=1,
				inner sep=2pt,
				outer sep=1pt,
				draw = none, 
				fill opacity=0.8,
				text opacity=1
			},
			legend image post style={mark size=1.8pt},
			title={SBF interpolation},
			title style={font=\large, yshift=-3pt},
			]

			\addplot[
			color={myred}, 
			line width=0.5pt,              
			mark = triangle,
			mark size= 1.5pt,
			] table[
			x index=0,
			y index=1,
			col sep=tab
			] {data_VecQI/field2noise1.txt};
			\addlegendentry{$\delta=0.5$};
			
			\addplot[
			color={myblue}, 
			line width=0.5pt,
			mark = square,
			mark size=0.75pt
			] table[
			x index=0,
			y index=2,
			col sep=tab
			] {data_VecQI/field2noise1.txt};
			\addlegendentry{$\delta=0.1$};
			
			\addplot[
			color={mygreen}, 
			line width=0.5pt,
			mark = diamond,
			mark size=1.5pt
			] table[
			x index=0,
			y index=3,
			col sep=tab
			] {data_VecQI/field2noise1.txt};
			\addlegendentry{$\delta=0.01$};
			
			\addplot[
			color={myorange}, 
			line width=0.5pt,
			mark = o,
			mark size=1pt
			] table[
			x index=0,
			y index=4,
			col sep=tab
			] {data_VecQI/field2noise1.txt};
			\addlegendentry{$\delta=0.001$};
			
		\end{axis}
	\end{tikzpicture}
	\vspace{-0.3cm}
	\captionsetup{font=normalsize}
	\caption{RMSE for the proposed vector quasi-interpolation and standard SBF interpolation as a function of the number of STD nodes $N$. Results are shown for four distinct Gaussian noise levels: $\delta \in \{0.001, 0.01, 0.1, 0.5\}$.}
	\label{fig.Noisydata}
\end{figure}


\bibliographystyle{plain}
\bibliography{reference}

\end{document}